\numberwithin{table}{section}
\numberwithin{algorithm}{section}
\newtheorem{theorem}{Theorem}[section]
\newtheorem{prop}[theorem]{Proposition}
\newtheorem{cor}[theorem]{Corollary}
\newtheorem{remark}[theorem]{Remark}
\newtheorem{conj}[theorem]{Conjecture}
\newtheorem{lemma}[theorem]{Lemma}
\newcommand{\CC}{{\mathbb C}}
\newcommand{\DD}{{\mathbb D}}
\newcommand{\HH}{{\mathbb H}}
\newcommand{\PP}{{\mathbb P}}
\newcommand{\RR}{{\mathbb R}}
\newcommand{\TT}{{\mathbb T}}
\newcommand{\spec}{\operatorname{spec}}
\begin{document}

\title[Strictly Stable Hurwitz Polynomials]{Strictly Stable Hurwitz Polynomials and their Determinantal Representations}
\author[V. Vinnikov]{Victor Vinnikov}\address{Department of Mathematics\\ Ben-Gurion University of the Negev\\ 
Beersheva, Israel.
}\email{vinnikov@bgu.ac.il}

\author[H. J. Woerdeman]{Hugo J. Woerdeman}\address{Department of Mathematics\\ Drexel University\\ 
3141 Chestnut Street\\ Philadelphia, PA 19104, USA. } \email{hugo@math.drexel.edu} \thanks{The research of HJW is supported by National Science Foundation grant DMS 2000037.
}

\date{ }


\begin{abstract}
We establish various certifying determinantal representation results for a polynomial that contains as a factor a prescribed multivariable polynomials that is strictly stable on a tube domain. The proofs use a Cayley transform in combination with the Matrix-valued Hermitian Positivstellensatz developed in [Grinshpan et. al., {\em Oper. Th. Adv.}
Appl., 255 (2016), 123–136].
\end{abstract}

\keywords{Determinantal representation, Hurwitz stability, Schur stability, multivariable polynomial, Siegel domain, Lie ball, tube domain, Lorentz cone}

\subjclass{
Primary: 15A15;  
Secondary: 26C10, 32A10, 47A13, 30C10, 93B28.}

          \maketitle


\section{Introduction}

Hyperbolic polynomials were originally introduced in the study of linear hyperbolic PDEs 
with constant coeeficients in the 1950s. In the last several decades hyperbolic polynomials
and associated hyperbolicity cones came to play a prominent role in several areas,
in particular in optimization where they appear as feasibility sets for hyperbolic programming.
An outstanding open problem, the generalized Lax conjecture, asks whether any hyperbolic polynomial
admits, up to a factor, a positive definite linear determinantal representation certifying its hyperbolicity
and representing the hyperbolicity cone as a spectrahedral cone 
(a linear slice of a cone of positive definite matrices)
which is a feasibility set for semidefinite programming.
In this paper we relate hyperbolic polynomials to complex polynomials that are stable with respect
to a tube domain over the hyperbolicity cone in the complex Euclidean space,
and consider
certifying linear determinantal representations of complex stable polynomials
using tools of multivariable operator theory.
The progress in understanding such determinantal representations
should be of considerable independent value to operator theory, real algebraic geometry, and optimization.

A homogeneous polynomial $p \in \RR[x_0,x_1,\ldots,x_d]$ is called hyperbolic with respect to $a \in \RR^{d+1}$
if $p(a) \neq 0$ and for any $b \in {\mathbb R}^{d+1}$,
the one-variable polynomial $p_b(t)=p(ta-b)$ has only real roots.
Hyperbolic polynomials were first introduced and investigated by G\"{a}rding \cite{Ga51,Ga59}
(who often refers to Petrovsky \cite{Pet45} as a source) 
in the study of linear partial differential equation with constant coefficients,
see also Lax \cite{Lax58} and the later paper \cite{ABG70}.
As was already discovered by G\"{a}rding, there is, quite surprisingly, a convex cone associated
to any hyperbolic polynomial: if $p$ is hyperbolic with respect to $a$ as above,
then the connected component $C$ of $a$ in $\RR^{d+1} \setminus \{x \in \RR^{d+1} \colon p(x)=0\}$
is a convex cone.
$C$ coincides with the set of all $b \in \RR^{d+1}$ 
such that all the roots of the one-variable polynomial $p_b$ are positive.
Furthermore, $p$ is hyperbolic with respect to any $a' \in C$.
$C$ is called a hyperbolicity cone. 
The irreducible hyperbolic polynomial $p$
with a given hyperbolicity cone $C$ can be recovered as the defining polynomial of the Zariski closure
of the boundary of $C$.

Hyperbolicity cones are
interesting due to their appearence at the crossroads between real algebraic geometry and convexity.
They are also attractive feasible sets
for interior point methods in conic programming \cite{NN94,Nem06}: 
$- \log p(x)$ is a logarithmically homogeneous self-concordant barrier function,
and G\" uler \cite{Gu97} has initiated hyperbolic programming 
--- optimizing an affine linear functional over a hyperbolicity cone ---
that was further developed in \cite{BGLS01,Re06} 
(see, e.g., \cite{Sa19} for a recent application).

If $A_0,A_1,\ldots,A_d$ are real symmetric or complex Hermitian matrices such that
$a_0 A_0 + a_1 A_1 + \cdots + a_d A_d > 0$ (here $X > 0$ means that the matrix $X$ is positive definite),
then it is easily seen that the polynomial
\begin{equation} \label{eq:detrep}
p(x) = \det(x_0 A_0 + x_1 A_1 + \cdots + x_d A_d)
\end{equation}
is hyperbolic with respect to $a$, and the corresponding hyperbolicity cone 
is given by a linear matrix inequality (LMI)
\begin{equation} \label{eq:lmi}
C = \left\{x \in \RR^{d+1} \colon x_0 A_0 + x_1 A_1 + \cdots + x_d A_d > 0\right\}.
\end{equation}
i.e., the hyperbolicity cone is a so called spectrahedral cone \cite{RG95}
(the intersection of the cone of positive definite matrices with a linear subspace)
which is the feasible set of a semidefinite program,
see \cite{NN94,VB96,Nem06} and especially \cite{SIG97} for applications in systems and control.

{\em In the case $d=2$, any hyperbolic polynomial admits a positive definite determinantal representation 
\eqref{eq:detrep}} and therefore 
{\em any hyperbolicity cone admits a LMI representation \eqref{eq:lmi}}
(with matrices of the smallest possible size equal to the degree of 
the irreducible polynomial $p$ with hyperbolicity cone $C$);
this was conjectured by Lax \cite{Lax58} and ---
following partial results in \cite{Du83} and \cite{Vi93} --- proved in \cite{HV07} (see also \cite{LPR05}).
It has been an active research topic, 
with several alternate proofs appearing in the intervening years \cite{Vi12,PV13,Ha17,GKVW16,Kn16}
motivated both by the desire to understand better the case $d=2$, including the computational aspects
(see also \cite{PSV11,PSV12}),
and to make progress towards the higher dimensional situation.
However the so called
{\bf generalized Lax conjecture} that {\em any hyperbolicity cone admits a LMI representation \eqref{eq:lmi}}
remains wide open. 
It is equivalent to the following statement:
{\em if $p$ is hyperbolic with respect to $a$ with the corresponding hyperbolicity cone $C$,
there exists a polynomial $q$ which is hyperbolic with respect to $a$ with a hyperbolicity cone
containing $C$ such that $pq$ admits a certifying
positive definite determinantal representation \eqref{eq:detrep}}.
Notice that by the results of \cite{Br11} $q$ cannot be taken in general to be a power of $p$,
and by the results of \cite{NT12} and \cite{Br11},
$q$ cannot be taken to be a product of a power of $p$ and the power of a linear form.

Hyperbolicity is essentially a homogeneous multivariable version of Hurwitz or upper half plane stability
for real polynomials.
More precisely, a homogeneous polynomial
$p \in \RR[z_1,\ldots,z_d]$ is $\HH^d$-stable, i.e., it has no zeroes in $\HH^d$,
where $\HH=\{z \in \CC \colon {\rm Im}\ z > 0\}$ is the upper half plane, if and only if 
it is hyperbolic with a hyperbolicity cone containing the positive orthant $\RR_+^d$. 
Multivariable version of Schur or unit disc stability for complex polynomials
came to play in important role in multivariable operator theory and multidimensional system theory.
In particular, it was shown in \cite{GKVW16a} that
if $p \in \CC[z_1,\ldots,z_d]$, $p(0)=1$, is a Schur stable polynomial which is strictly stable, 
i.e., $p$ has no zeroes on $\overline{\DD^d}$,
where $\DD=\{z \in \CC \colon |z|<1\}$ is the unit disc,
then there exists a Schur stable polynomial $q$ such that 
$pq$ admits a contractive determinantal representation certifying its stability:
\begin{equation} \label{eq:contrdetrep}
p(z)q(z) = \det(I_{|N|} - K Z_N(z)),
\end{equation} 
where $N=(N_1,\ldots,N_d)$, 
$|N|=N_1+\cdots+N_d$,
$Z_N(z) = \bigoplus_{i=1}^d z_i I_{N_i}$,
and $K \in \CC^{|N| \times |N|}$ is a contraction: $||K\| \leq 1$.

An interesting generalization of multivariable Hurwitz stability 
was considered in the recent papers \cite{JT18,JT18a,JTdW19,DGT20+}:
$\Omega_C$-stability, i.e., having no zeroes in $\Omega_C$,
where $\Omega_C \subseteq \CC^d$ is a Siegel domain of the first kind
(a tube domain over a convex cone as a base) \cite{Ho90,SW71,Pia69,Upmeier}, i.e.,   
$\Omega_C = \RR^d + iC$ with $C \subseteq \RR^d$ an (open) convex cone.  
For the special cases $C=\RR^d_+$ and $C=\{Z \in {\rm Herm}_m \colon Z > 0\}$,
where ${\rm Herm}_m$ denotes the real vector space of Hermitian $m \times m$ matrices,
$\Omega_C$-stable polynomials are Hurwitz stable polynomials on $\CC^d$ and
$\mathbb{UHP}_m$-stable (or psd-stable) polynomials on $\CC^{m \times m}$,
where $\mathbb{UHP}_m = \{Z \in \CC^{m \times m} \colon {\rm Im}\ Z > 0\}$
is Siegel's upper half plane \cite{vdGe08}
(${\rm Im}\ Z = \frac{1}{2i}(Z-Z^*)$).

Inspired by the
generalized Lax conjecture, we are exploring in this paper the following conjecture:
{\em let $p \in {\mathbb C}[z_1,\ldots,z_d]$
be $\Omega_C$-stable,
then there exists an $\Omega_C$-stable polynomial $q \in {\mathbb C}[z_1,\ldots,z_d]$
such that $pq$ admits a certifying determinantal representation
\begin{equation} \label{eq:tubedomaindetrep}
pq = \det(A_0+z_1A_1+\cdots+z_dA_d) 
\end{equation}
where ${\rm Im}\ A_0:=\frac{1}{2i}(A_0-A_0^*) \geq 0$ 
and 
$A_1,\ldots,A_d$ are Hermitian matrices satisfying $x_1A_1+\ldots+x_dA_d>0$ for all $x \in C$.} 
We are able to establish this conjecture in Section \ref{standard} for the case of Hurwitz stable polynomials 
that are {\em strictly} stable, which is a requirement on the growth of the absolute value of the polynomial as the variables tend to infinty within the domain of stability (see \eqref{pbbdbelow}).
In other words, we showed that in this case,
there exists a Hurwitz stable polynomial $q$ such that 
$pq$ admits a certifying determinantal representation: 
\begin{equation} \label{eq:hurwitzdetrep}
pq = \det(A_0+z_1A_1+\cdots+z_dA_d) 
\end{equation}
where ${\rm Im}\ A_0:=\frac{1}{2i}(A_0-A_0^*) \geq 0$,
$A_1,\ldots,A_d \geq 0$, $A_1+\ldots+A_d>0$.
A similar representation in the free noncommutative setting has been considered in \cite{Vol19}.
The proof uses the Cayley transform to map the unit disc $\DD$ onto the upper half plane $\HH$
and the certifying contractive determinantal \eqref{eq:contrdetrep} for strictly stable Schur polynomials
(carrying out calculations very similar to an operator-theoretic proof of the Lax conjecture in \cite{Kn16}). In subsequent sections we obtain similar results for 
tube domains over irreducible symmetric homogeneous cones \cite{Loos,Faraut,Upmeier}, namely those corresponding to (i) matrix and Siegel upper halfspaces (Section \ref{section3}); (ii) skew-symmetric matrix halfspaces (Section \ref{sec3a}); (iii) the bivariable Lorents cone (Section \ref{sec4}); (iv) the $n$-variable Lorentz cone (Section \ref{sec6}). Finally, in Section \ref{sec7} we consider the exceptional tube domain with 27 variables. We have a conjectured result, which depends on resolving a polynomial convexity result for the associated bounded domain (see Conjecture \ref{polcon} for details).
We establish some general results on $\Omega_C$-stable polynomials and hyperbolic polynomials in Section \ref{genprop}.

%

\section{Some general properties of polynomials that are stable on a tube domain over a cone} \label{genprop}

Let $C \subseteq \RR^d$ be an open (convex) cone
and let $\Omega_C = \RR^d + iC \subseteq \CC^d$ be the corresponding tube domain.
Let $p \in \CC[z_1,\ldots,z_d]$, $p \neq 0$, 
and let us write $p = r + iq$  with $r,q \in \RR[z_1,\ldots,z_d]$.
For $0 \leq i \leq n = \deg p$ we denote by $p_i$ the degree $i$ homogeneous component of $p$, similarly for $r_i$ and $q_i$.

\begin{theorem}
1. The following are equivalent:
\begin{description}
\item[(i)]
$p$ is $\Omega_C$-stable, i.e., $p(z) \neq 0$ for all $z \in \Omega_C$.
\item[(ii)]
For all $y \in C$ and all $x \in \RR^d$,
the polynomial $p(x+ty) \in \CC[t]$ is Hurwitz stable, i.e.,
it has no zeroes in the upper half plane $\HH$.
\item[(iii)]
For all $y \in C$ and all $x \in \RR^d$,
the polynomials $r(x+ty), q(x+ty) \in \RR[t]$ are real Hurwitz stable, i.e.,
all their zeroes are real, and their zeroes interlace.
\item[(iv)]
The real rational function $r/q$ satisfies ${\rm Im}\ r/q(z) > 0$ for all $z \in \Omega_C$.
\end{description}

2. Assume that $p$ is $\Omega_C$-stable.
Then there exists $c \in \CC$, $c \neq 0$, 
s.t. $cp_n \in \RR[z_1,\ldots,z_d]$ and is a hyperbolic polynomial with 
$C$ contained in the closure of a union of hyperbolicity cones
of $cp_n$. 
We will say that $p$ is normalized if $p_n \in \RR[z_1,\ldots,z_d]$.

3. Assume that $p$ is a normalized $\Omega_C$-stable polynomial.
Then $q_{n-1}$ is a hyperbolic polynomial so that for every $x\in {\mathbb R}^d$ and $y \in C$ the zeroes of $p_n(x+ty)$ and $q_{n-1}(x+ty) \in \RR[t]$ are all real and interlace.
\end{theorem}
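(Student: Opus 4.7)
My plan is to derive Part 3 from Part 1(iii) by a scaling argument that isolates the top-degree behavior of $r$ and $q$. Normalization gives $r_n=p_n$ and $q_n=0$, so $q(x+ty)$ has $t$-degree at most $n-1$. Fix $x\in\RR^d$ and $y\in C$. For each $s>0$, Part 1(iii) applied to $sx\in\RR^d$ and $y\in C$ asserts that $r(sx+ty)$ and $q(sx+ty)$ are real polynomials in $t$ with all zeros real and interlacing. Define
\[
R_s(t) := s^{-n}\, r\bigl(s(x+ty)\bigr) = \sum_{j=0}^{n} s^{\,j-n}\, r_j(x+ty), \qquad Q_s(t) := s^{-(n-1)}\, q\bigl(s(x+ty)\bigr) = \sum_{j=0}^{n} s^{\,j-n+1}\, q_j(x+ty),
\]
using the homogeneity $r_j(s\xi)=s^j r_j(\xi)$ and similarly for $q_j$. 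Because $q_n=0$, letting $s\to\infty$ gives coefficient-wise convergence $R_s(t)\to p_n(x+ty)$ and $Q_s(t)\to q_{n-1}(x+ty)$. The change of variable $u=st$ rescales the zeros of $r(sx+uy)$ and $q(sx+uy)$ by the positive factor $1/s$, so $R_s$ and $Q_s$ inherit real interlacing zeros for every $s>0$.

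The key rigidity is that the relevant top coefficients of $R_s$ and $Q_s$ do not drift with $s$: the coefficient of $t^n$ in $R_s$ equals $r_n(y)=p_n(y)$ (only $r_n$ contributes at the top), and the coefficient of $t^{n-1}$ in $Q_s$ equals $q_{n-1}(y)$ (only $q_{n-1}$ contributes, since $q_n=0$ and any $q_j$ with $j\le n-2$ has $t$-degree at most $n-2$). By Part 2, some hyperbolicity cone $K$ of $p_n$ meets $C$ in a nonempty open set, where $p_n(y)\ne 0$. Classical interlacing in Part 1(iii) then forces $q_{n-1}(y)\ne 0$ at such $y$: otherwise $q(x+ty)$ would have $t$-degree at most $n-2$ while $r(x+ty)$ has degree $n$, contradicting the requirement $|\deg r-\deg q|\le 1$ for interlacing. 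Thus for $y\in C\cap\mathrm{int}(K)$, both $R_s$ and $Q_s$ have fixed degrees $n$ and $n-1$ with $s$-independent leading coefficients, and by continuity of roots for polynomials with fixed-degree convergent coefficients, the limits $p_n(x+ty)$ and $q_{n-1}(x+ty)$ inherit real interlacing zeros.

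Fixing any such $a\in C\cap\mathrm{int}(K)$, the polynomial $q_{n-1}(x+ta)\in\RR[t]$ has $n-1$ real zeros and leading coefficient $q_{n-1}(a)\ne 0$ for every $x\in\RR^d$; this is precisely the definition of hyperbolicity of $q_{n-1}$ with respect to $a$. For arbitrary $y\in C$ (possibly with $p_n(y)=0$ or $q_{n-1}(y)=0$), one approximates by $y_k\to y$ from the dense set on which interlacing has been established and passes to the limit, obtaining real zeros that interlace in the weak sense. The main obstacle in this scheme is the $s\to\infty$ step itself: a priori, zeros of a sequence of polynomials with real interlacing zeros can escape to infinity and wreck the structure, but here the key top coefficients $p_n(y)$ and $q_{n-1}(y)$ are genuine \emph{constants} in $s$ (not merely convergent), pinning the zeros down and legitimizing the limit. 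The edge case $q\equiv 0$ (in which $p$ is real and $q_{n-1}\equiv 0$) reduces Part 3 to the hyperbolicity of $p_n$ already supplied by Part 2.
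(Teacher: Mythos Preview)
Your proof is correct and follows essentially the same approach as the paper: both pass to the top homogeneous components via a scaling limit (your $s\to\infty$ in $R_s(t)=s^{-n}r(s(x+ty))$, $Q_s(t)=s^{-(n-1)}q(s(x+ty))$ is the paper's homogenization followed by $\mu_0\to 0^+$, under the identification $\mu_0=1/s$). You are considerably more careful than the paper about degree drops, the non-vanishing of $p_n(y)$ and $q_{n-1}(y)$, and the edge case $q\equiv 0$; the paper simply asserts the limiting interlacing in one line.
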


Note that $p_n$ is also called the {\em initial form} of $p$; see, e.g., \cite{JT18a}. In addition, item 2 above was proven in \cite[Theorem 3.1]{DGT20+} for the case for when $p(x)=\det( A_0 + x_1 A_1 + \cdots + x_d A_d)$ with $A_i$ Hermitian, and $\sum_{j=1}^n e_jA_j >0$ for some real $e_1,\ldots , e_n$. We refer to \cite{KPV} for more information on interlacing of hyperbolic polynomials.

\begin{proof}
1. 
{\bf(i)}  $\implies$ {\bf(ii)}:
Assume that $p$ is $\Omega_C$-stable and let $x \in \RR^d$, $y \in C$.
Then ${\rm Im}(x+ty) = {\rm Im}(t) y \in C$ for ${\rm Im}(t)>0$, 
and therefore $p(x+ty) \neq 0$ for ${\rm Im}(t)>0$. 

{\bf(ii)} $\implies$ {\bf(i)}: Obvious --- 
for $z \in \Omega_C$, 
write $z=x+iy$ with $x \in \RR^d$, $y \in C$,
and use {\bf(ii)} with $t=i$.

The equivalence of {\bf(iii)} and {\bf(iv)} 
now follows from the classical results on Hurwitz stable polynomials; see, for instance, \cite{KN}.

2. Assume that 
$p = p_n + p_{n-1} + \cdots +p_0$
is $\Omega_C$-stable. 
Let us consider the homogenization of $p$,
\begin{multline*}
P(\mu_0,\mu_1,\ldots,\mu_d) = \mu_0^n p(\mu_1/\mu_0,\ldots,\mu_d/\mu_0)
\\
= p_n(\mu_1,\ldots,\mu_d) + \mu_0 p_{n-1}(\mu_1,\ldots,\mu_d) + \cdots + \mu_0^n p_0.
\end{multline*}
It follows that $P(\mu_0,z) \neq 0$ for all $\mu_0>0$, $z \in \Omega_C$,
and therefore by 1., for all $\mu_0 > 0$, $x \in \RR^d$, $y \in C$,
all the zeroes of $P(\mu_0,x+ty) \in \CC[t]$ are in the closed lower half plane. Since the zeroes of the polynomial depend continuously on the coefficients, by taking $\mu_0 \to 0+$ it follows that all the zeroes of
$P(0,x+ty) = p_n(x+ty) \in \CC[t]$ are in the closed lower half plane (and therefore $p_n$ is $\Omega_C$-stable).

Assume that $x \in C$ as well. Then also all the zeroes of $p_n(y+tx) \in \CC[t]$ are in the closed lower half plane. Assume that $p_n(x), p_n(y) \neq 0$, so that $p_n(x+ty), p_n(y+tx) \in \CC[t]$ are of degree $n$ and $t=0$ is not a root of either polynomial, 
and notice that since $p_n(y+tx) = t^n p_n(x+t^{-1}y)$ the roots of the two polynomials are inverses of each other. As ${\rm Im}\ t > 0 \implies {\rm Im}\ t^{-1} < 0$, we conclude that for $x,y \in C$, $p(x), p(y) \neq 0$, all the roots of $p(x+ty) \in \CC[t]$ are real. Writing $p_n(x+ty) = p_n(y) t^n + \cdots + p_n(x)$, it follows that $p_n(x) / p_n(y)$ is real. Fixing $y$, we see that the polynomial $p_n(z) / p_n(y) \in \CC[z_1,\ldots,z_d]$ is real on an open set of $\RR^d$, and is therefore real valued. 

We assume now that $p$ is normalized so that $p_n \in \RR[z_1,\ldots,z_d]$ and let again $y \in C$, $p(y) \neq 0$. Then for all $x \in \RR^d$, $p_n(x+ty) \in \RR[t]$ has no roots in the upper half plane and therefore all the roots are real and $p_n$ is hyperbolic with respect to $y$. 

3. Similarly to the proof above, we consider the homogenizations of $r$ and of $q$,
\begin{align*}
R(\mu_0,\mu_1,\ldots,\mu_d) = \mu_0^n &  r(\mu_1/\mu_0,\ldots,\mu_d/\mu_0) \\
& = r_n(\mu_1,\ldots,\mu_d) + \mu_0 r_{n-1}(\mu_1,\ldots,\mu_d) + \cdots + \mu_0^n r_0,\\
Q(\mu_0,\mu_1,\ldots,\mu_d) = \mu_0^n & q(\mu_1/\mu_0,\ldots,\mu_d/\mu_0) & \\
& = q_n(\mu_1,\ldots,\mu_d) + \mu_0 q_{n-1}(\mu_1,\ldots,\mu_d) + \cdots + \mu_0^n q_0.
\end{align*}
Since $p$ is normalized, we have $r_n=p_n$ and $q_n=0$.
By 1., for all $\mu_0  > 0$, $x \in \RR^d$, $y \in C$,
the zeroes of $R(\mu_0,x+ty), Q(\mu_0,x+ty) \in \RR[t]$ are all real and interlace. 
Taking $\mu \to 0+$, we conclude that the zeroes of $R(0,x+ty)= p_n(x+ty), Q(0,x+ty)=q_{n-1}(x+ty) \in \RR[t]$ are all real and interlace. 
\end{proof}

\begin{remark}\rm 
In case the hyperbolic polynomial $p_n$ has a single hyperbolicity cone up to sign (i.e., it has exactly two hyperbolicity cones which are reflections of each other), item 2. means simply that $p_n$ is hyperbolic with hyperbolicity cone containing $C$. This is always the case if the corresponding real projective variety $\left\{[x] \in \PP^{d-1}_\RR \colon p_n(x)=0\right\}$ (where $\PP^{d-1}_\RR$ denotes the $(d-1)$-dimensional real projective space and $[x]$ denotes the point with projective coordinates $x$) is smooth \cite{HV07} or if $p_n$ is irreducible \cite{Kum19}.
\end{remark}

\section{Standard Strictly Stable Hurwitz Polynomials}\label{standard}

Let ${\mathbb H}= \{ z \in {\mathbb C} : {\rm Im} \ z >0 \}$. 

\begin{theorem}\label{phasdetrep}
Let $p(z_1,\ldots, z_d)$ be a polynomial of multidegree $(n_1,\ldots, n_d)$ so that for some $\epsilon >0$ we have
\begin{equation}\label{pbbdbelow} |p(z_1,\ldots, z_d) | \ge \epsilon \prod_{j=1}^d |z_j +i |^{n_j}, \ z_1, \ldots, z_d \in {\mathbb H}. \end{equation} Then there exists a polynomial $q(z_1,\ldots, z_d)$ so that the product $pq$ has a certifying determinantal representation; in other words, there exist matrices $A_0, A_1, \ldots , A_d$ so that 
$$ {\rm Im}\ A_0 = \frac{1}{2i}(A_0-A_0^*) > 0, \  \ A_j\ge 0, j=1,\ldots,d, \ \   A_1+\cdots + A_d >0, $$
$$ p(z_1,\ldots, z_d) q(z_1,\ldots, z_d) = \det( A_0 + z_1A_1 + \cdots + z_d A_d ). $$
\end{theorem}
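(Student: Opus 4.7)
The plan is to transport the problem to the polydisc via the Cayley transform, apply the contractive determinantal representation of \cite{GKVW16a} to a strictly Schur stable polynomial, and then pull the result back to the tube $\HH^d$ by a block matrix manipulation in the spirit of Knese's proof of the Lax conjecture \cite{Kn16}.

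Explicitly, let $\varphi(w) = i(1+w)/(1-w)$, which maps $\DD$ biholomorphically onto $\HH$ with inverse $\varphi^{-1}(z) = (z-i)/(z+i)$ and $1-\varphi^{-1}(z) = 2i/(z+i)$. Set
\[
\tilde p(w_1,\ldots,w_d) := \prod_{j=1}^d (1-w_j)^{n_j}\, p\bigl(\varphi(w_1),\ldots,\varphi(w_d)\bigr),
\]
a polynomial of multidegree at most $(n_1,\ldots,n_d)$. Since $|\varphi(w_j)+i| = 2/|1-w_j|$, the hypothesis \eqref{pbbdbelow} yields $|\tilde p(w)| \ge \epsilon\, 2^{n_1+\cdots+n_d}$ on $\DD^d$, and by continuity $\tilde p$ has no zeroes on $\overline{\DD^d}$.

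Because $\tilde p$ is strictly Schur stable, the main result of \cite{GKVW16a}, applied in its strict form, should produce a Schur stable polynomial $\tilde q$ with $\deg_{w_j}\tilde q = N_j - n_j$ and a strict contraction $K$, $\|K\|<1$, satisfying
\[
\tilde p(w)\,\tilde q(w) = \det\bigl(I_{|N|} - K Z_N(w)\bigr), \qquad Z_N(w) = \bigoplus_{j=1}^d w_j I_{N_j}.
\]
Set $T(z) = \bigoplus_j z_j I_{N_j}$. The Cayley substitution $w_j = (z_j-i)/(z_j+i)$ gives $Z_N(w) = (T-iI)(T+iI)^{-1}$, so multiplying the identity on the right by $T+iI$ produces the key relation
\[
(I - KZ_N(w))(T + iI) = (I-K)\,T + i(I+K).
\]
Taking determinants, using $\det(T+iI) = \prod_j (z_j+i)^{N_j}$ and factoring out the invertible matrix $I-K$, one gets
\[
\tilde p(w)\,\tilde q(w) \prod_j (z_j+i)^{N_j} = \det(I-K)\, \det\bigl(T + i(I-K)^{-1}(I+K)\bigr).
\]
I then set $A_0 := i(I-K)^{-1}(I+K)$ and $A_j := \Pi_j$, the orthogonal projection onto the $j$-th block of $\CC^{|N|}$, so that $T = \sum_j z_j A_j$. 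Each $A_j \ge 0$, $\sum_j A_j = I > 0$, and the standard Cayley computation
\[
\mathrm{Im}\,A_0 = (I-K)^{-1}(I - KK^*)(I - K^*)^{-1}
\]
is positive definite because $\|K\|<1$. Since $\tilde p(\varphi^{-1}(z)) = \prod_j (2i/(z_j+i))^{n_j} p(z)$, defining
\[
q(z) := \frac{(2i)^{n_1+\cdots+n_d}}{\det(I-K)}\, \tilde q\bigl(\varphi^{-1}(z_1),\ldots,\varphi^{-1}(z_d)\bigr) \prod_j (z_j+i)^{N_j-n_j},
\]
which is a genuine polynomial since $\deg_{w_j}\tilde q = N_j - n_j$, one recovers exactly $p(z)q(z) = \det(A_0 + z_1 A_1 + \cdots + z_d A_d)$.

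The main obstacle is securing $\|K\|<1$ in the polydisc representation, since it is precisely this strict contractivity that upgrades $\mathrm{Im}\,A_0 \ge 0$ to $\mathrm{Im}\,A_0 > 0$. I expect this to follow from the matrix-valued Hermitian Positivstellensatz underlying \cite{GKVW16a}: the uniform lower bound on $|\tilde p|$ over $\overline{\DD^d}$ should translate into a strictly positive matrix polynomial, and hence into a strictly contractive realization $K$. Everything else is block-matrix bookkeeping around the Cayley substitution.
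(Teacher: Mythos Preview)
Your approach is exactly the paper's, and the Cayley bookkeeping is correct. Two points remain where the paper fills in details you leave open.

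\textbf{Strict contractivity of $K$.} You correctly flag this as the obstacle, but the fix requires no refinement of the Positivstellensatz. Since $|\tilde p|\ge \epsilon\,2^{\sum n_j}$ on $\DD^d$, by continuity and compactness there is an $r>1$ with $\tilde p$ zero-free on $r\,\overline{\DD^d}$. Apply \cite[Theorem~4.1]{GKVW16a} to $\tilde p(rw)$ to obtain a contraction $\widehat K$, then rescale $w\mapsto w/r$: this gives the representation for $\tilde p\tilde q$ with $K=\widehat K/r$, and $\|K\|\le 1/r<1$.

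\textbf{Polynomiality of $q$.} The assertion $\deg_{w_j}\tilde q = N_j-n_j$ is not justified and can fail. It would require $\deg_{w_j}\tilde p = n_j$, but $\tilde p$ drops degree in $w_j$ exactly when $(z_j+i)\mid p$; for instance $p(z_1)=z_1+i$ satisfies \eqref{pbbdbelow} with $\epsilon=1$, yet $\tilde p(w_1)=2i$ is constant. In such cases your formula for $q$ retains poles at $z_j=-i$. The paper's remedy is simply to enlarge $N_j$ by the deficit and pad $K$ with corresponding zero rows and columns; this preserves $\|K\|<1$ and makes the factor $\prod_j(z_j+i)^{N_j-n_j}$ large enough to clear all denominators.
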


Let $\DD = \{ z \in \CC : |z|<1\}$ and $\TT = \{ z \in \CC : |z|=1\}$.
We use the Cayley transform in a similar way as in \cite{Kn16}. Recall that the map
$$ \phi(z) = i \frac{1+z}{1-z} $$ maps the unit disk conformally onto the upper half plane sending $\TT$ to $\RR \cup \{ \infty \}$. The inverse is given by
$$ \phi^{-1}(w)= \frac{w-i}{w+i}. $$

\begin{proof}
Put
$$ \tilde{p} (z_1,\ldots, z_d) = p(i \frac{1+z_1}{1-z_1}, \ldots , i \frac{1+z_d}{1-z_d})\prod_{j=1}^d(1-z_j)^{n_j} . 
$$
Then, for $z_j\in\DD$, we have
$$ |\tilde{p} (z_1,\ldots, z_d)| = |p(i \frac{1+z_1}{1-z_1}, \ldots , i \frac{1+z_d}{1-z_d})\prod_{j=1}^d(1-z_j)^{n_j}  |\ge $$
$$  \epsilon \prod_{j=1}^d | i \frac{1+z_j}{1-z_j} +i |^{n_j}\prod_{j=1}^d|(1-z_j)^{n_j}| =\epsilon \prod_{j=1}^d |i(1+z_j)+i (1-z_j)|^{n_j} = \epsilon 2^{\sum n_j}. $$ Thus $\tilde{p}$ is strongly stable on $\overline{\DD}^d$. By applying \cite[Theorem 4.1]{GKVW16a} to $\tilde{p}(rz)$ for an appropriate $r>1$, there exists a polynomial $\tilde{q}$, nonnegative integers $N_j$, $j=1,\dots,d$, and a strictly contractive matrix $K$ of size $\sum N_j \times \sum N_j$ so that 
$$\tilde{p}\tilde{q} (z_1,\ldots, z_d) = \tilde{p} (z_1,\ldots, z_d) \tilde{q} (z_1,\ldots, z_d) = \tilde{p}(0) \tilde{q}(0) \det(I-KZ_N),$$ where
$$ Z_N= {\rm diag} (z_j I_{N_j})_{j=1}^d. $$
Then, for $z_1, \ldots , z_d \in \HH$, 
$$ \tilde{p}\tilde{q} (\frac{z_1-i}{z_1+i},\ldots, \frac{z_d-i}{z_d+i}) \prod_{j=1}^d (z_j+i)^{N_j} = $$ 
\begin{equation}\label{ff} \tilde{p}(0) \tilde{q}(0) \det(\left(\oplus_{j=1}^d (z_j+i)I_{N_j}\right)-K\left(\oplus_{j=1}^d (z_j-i)I_{N_j}\right))= \end{equation}
$$=\tilde{p}(0) \tilde{q}(0) \det(\left(\oplus_{j=1}^d z_jI_{N_j}\right) +i(I+K)(I-K)^{-1} ) \det(I-K) = $$ $$\tilde{p}(0) \tilde{q}(0)\det(A_0+z_1A_1+\cdots + z_d A_d)\det(I-K), $$ where 
$$ A_0 = i(I+K)(I-K)^{-1}, \ A_j = \oplus_{k=1}^d \delta_{jk} I_{N_k}, j=1,\ldots, k, $$
where $\delta_{jk}$ is the Kronecker delta. We have that
$$ \frac{A_0-A_0^*}{2i} = (I-K^*)^{-1} (I-K^*K)(I-K)^{-1} > 0, $$
and since $A_1, \ldots, A_d$ are orthogonal projections that add up to the identity, the matrices $A_j$ are as desired. Finally, one calculates that 
$$ \tilde{p} (\frac{z_1-i}{z_1+i},\ldots, \frac{z_d-i}{z_d+i}) \prod_{j=1}^d (z_j+i)^{n_j} = {p} (z_1,\ldots, z_d) (2i)^{\sum n_j}. $$ One now lets
$$ q(z_1, \ldots , z_d) = \frac{(2i)^{\sum n_j}}{\tilde{p}(0) \tilde{q}(0)\det(I-K)} \ \tilde{q} (\frac{z_1-i}{z_1+i},\ldots, \frac{z_d-i}{z_d+i}) \prod_{j=1}^d (z_j+i)^{N_j-n_j}. $$
If $q$ is a polynomial, we are done. If $q$ still has some factors $z_j+i$ in the denominator, one can increase $N_j$ in \eqref{ff} accordingly, and add corresponding zero rows and columns in $K$.
\end{proof}

\begin{prop} Let $p(z_1,\ldots, z_d)$ be a polynomial of multidegree $(n_1,\ldots, n_d)$. The following are equivalent:
\begin{itemize}
\item[(i)] There exists an $\epsilon >0$ so that \eqref{pbbdbelow} holds.
\item[(ii)] There exists an $\epsilon >0$ so that $|\tilde{p}(z_1,\ldots , z_d) | \ge \epsilon$, $z_1, \ldots, z_d \in {\mathbb D}$.
\item[(iii)] The polynomial $p$ contains a nontrivial term $\prod_{j=1}^d z_j^{n_j}$ and there exists an $\epsilon >0$ so that $|{p}(z_1,\ldots , z_d) | \ge \epsilon$, $z_1, \ldots, z_d \in {\mathbb H}$.
\end{itemize}
\end{prop}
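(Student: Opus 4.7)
The plan is to handle (i)$\Leftrightarrow$(ii) and (ii)$\Rightarrow$(iii) by direct computation with the Cayley transform, and the nontrivial direction (iii)$\Rightarrow$(ii) by an induction on $d$ combined with a Hurwitz/open mapping argument on $\tilde p$.

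For (i)$\Leftrightarrow$(ii), the substitution $z_j=\phi(w_j)$ gives $1-w_j=2i/(z_j+i)$, hence
\[
|\tilde p(w)| \;=\; |p(z)|\cdot\frac{2^{\sum_j n_j}}{\prod_j|z_j+i|^{n_j}},
\]
so the bounds in (i) and (ii) differ only by the constant $2^{\sum_j n_j}$. For (ii)$\Rightarrow$(iii), combining this identity with $|z_j+i|>1$ on $\HH$ yields $|p(z)|\ge\epsilon/2^{\sum_j n_j}$ on $\HH^d$, and specializing the expansion
\[
\tilde p(w) \;=\; \sum_\alpha c_\alpha\, i^{|\alpha|}\prod_j(1+w_j)^{\alpha_j}(1-w_j)^{n_j-\alpha_j}
\]
at $w=(1,\ldots,1)$ leaves only $\alpha=(n_1,\ldots,n_d)$, giving $\tilde p(1,\ldots,1)=c_n(2i)^{\sum_j n_j}$; continuity of $\tilde p$ on $\overline{\DD^d}$ and the bound $|\tilde p|\ge\epsilon$ then force $c_n\ne 0$.

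For (iii)$\Rightarrow$(ii), by continuity and compactness of $\overline{\DD^d}$ it suffices to show that $\tilde p$ has no zero on $\overline{\DD^d}$. The lower bound $|p|\ge\epsilon$ extends to $\overline{\HH^d}$ by continuity, and since $\tilde p(w_0)=p(z_0)\prod_j(1-w_{0,j})^{n_j}$ is a product of nonzero factors whenever $w_{0,j}\ne 1$ for all $j$, the only candidate zeros lie on boundary points for which $S:=\{j:w_{0,j}=1,\ n_j>0\}\ne\emptyset$. Specialization of the expansion at $w_j=1$ for $j\in S$ yields
\[
\tilde p(w_0) \;=\; \prod_{j\in S}(2i)^{n_j}\cdot\widetilde{p^{(S)}}\bigl((w_{0,l})_{l\notin S}\bigr),
\]
where $p^{(S)}$ is the coefficient of $\prod_{j\in S}z_j^{n_j}$ in $p$ viewed as a polynomial in $(z_j)_{j\in S}$. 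The case $S=\{1,\ldots,d\}$ yields $c_n(2i)^{\sum_j n_j}\ne 0$; otherwise I aim to apply the inductive hypothesis to the polynomial $p^{(S)}$ in $d-|S|$ variables, whose top coefficient is again $c_n\ne 0$.

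The main obstacle is to verify that $p^{(S)}$ satisfies the lower bound in (iii) on $\HH^{d-|S|}$, since a direct limit argument (sending $z_j\to\infty$ in $\HH$ for $j\in S$) only yields $|p^{(S)}|\ge 0$. I would proceed by contradiction using Hurwitz/open mapping: if $\widetilde{p^{(S)}}$ vanishes at some $w^*\in\overline{\DD^{d-|S|}}$, view $\tilde p(\cdot,w^*)$ as a polynomial in $(w_j)_{j\in S}$ that vanishes at $(1,\ldots,1)$ and---assuming $w^*\in\DD^{d-|S|}$---is nonvanishing on $\DD^{|S|}$ by stability of $p$ on $\HH^d$. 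An analytic branch of the root locus through $(1,\ldots,1)$ either is nonconstant, whence the open mapping theorem forces the branch's image to intersect $\DD^{|S|}$ (contradicting stability), or has some coordinate frozen at $w_j\equiv 1$, which makes $(w_j-1)$ a factor of $\tilde p$ and hence forces $p^{(\{j\})}\equiv 0$, contradicting $c_n\ne 0$ via $\widetilde{p^{(\{j\})}}(1,\ldots,1)=c_n(2i)^{\sum_{l\ne j}n_l}$. Boundary points $w^*\in\partial(\DD^{d-|S|})$ are handled by absorbing any additional indices with $w^*_l=1$ into $S$ and recursing.
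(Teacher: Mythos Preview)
Your treatment of (i)$\Leftrightarrow$(ii) and of (ii)$\Rightarrow$(iii) is correct and coincides with the paper's (the paper routes the latter through (i)$\Rightarrow$(iii), but the substance is the same Cayley computation).

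For (iii)$\Rightarrow$(ii) you take a very different and much heavier route than the paper. The paper proves (iii)$\Rightarrow$(i) directly in two lines: because $c_n\prod_j z_j^{n_j}$ is the unique monomial of top total degree it dominates for $|z_1|+\cdots+|z_d|\ge M$, giving the estimate there; on the bounded region $|z_1|+\cdots+|z_d|\le M$ the hypothesis $|p|\ge\epsilon$ combined with the trivial upper bound on $\prod_j|z_j+i|^{n_j}$ finishes. There is no induction, no analysis of the boundary faces of $\overline{\DD^d}$, and no open-mapping argument.

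Your open-mapping step has a genuine gap. The dichotomy you assert for the zero variety of $\tilde p(\cdot,w^*)$ through $(1,\ldots,1)$---either a nonconstant branch whose image is forced by the open mapping theorem to meet $\DD^{|S|}$, or some coordinate frozen at $w_j\equiv 1$---is simply false for general polynomials. The curve $t\mapsto(t,1/t)$ (the zero locus of $w_1w_2-1$) is nonconstant, passes through $(1,1)$, has no coordinate frozen at $1$, and yet never enters $\DD^2$: if $|t|<1$ then $|1/t|>1$. Open mapping applied to a single coordinate $w_j(t)$ only forces \emph{that} coordinate into $\DD$, not the whole tuple. You are tacitly relying on the extra structure of $\tilde p$ (coming from $c_n\ne 0$ and $|p|\ge\epsilon$ on $\HH^d$) to exclude this behavior, but you never establish why; the honest inductive step would be to show that $p^{(S)}$ itself satisfies (iii), and as you already concede, sending $z_j\to\infty$ only yields $|p^{(S)}|\ge 0$. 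Your contradiction argument does not close that gap, and the ``recurse on boundary $w^*$'' clause inherits the same problem.
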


\begin{proof} The equivalence of (i) and (ii) follows from the calculations in the proof of Theorem \ref{phasdetrep}

Assume that (i) holds. Then it follows from \eqref{pbbdbelow} that the total degree of $p$ must at least be $\sum_{j=1}^d n_j$. But the only way that is possible is when $p$ has a nontrivial term $\prod_{j=1}^d z_j^{n_j}$.

It remains to prove that (iii) implies (i). If $p$ contains a nontrivial term $\prod_{j=1}^d z_j^{n_j}$, then this term being the single one of total degree $\sum_{j=1}^d n_j$, will eventually dominate all other terms. Thus, there exists an $M$ and an $\hat\epsilon>0$ so that 
$$ |p(z_1, \ldots , z_d )| \ge\hat\epsilon \prod_{j=1}^d |z_j +i |^{n_j}, \hbox{ when } |z_1|+ \cdots + |z_d| \ge M \hbox{ and }\ z_1, \ldots, z_d \in \HH. $$ Since $|{p}(z_1,\ldots , z_d) | \ge \tilde\epsilon$, $z_1, \ldots, z_d \in {\mathbb H}$, we have that 
$$
\frac{|{p}(z_1,\ldots , z_d)|}{\prod_{j=1}^d |z_j +i |^{n_j}}  \ge \epsilon', $$
 when $|z_1|+ \cdots+ |z_d| \le  M $ and $ z_1, \ldots, z_d \in \HH. $ Now (i) is satisfied with the choice $\epsilon = \min \{\hat\epsilon, \epsilon' \} $. 
\end{proof}

\section{Strict Stability with respect matrix and Siegel upper halfspaces}\label{section3}

We consider the matrix and Siegel upper halfspaces \cite{Siegel}:
$$ \mathbb{UHP}_n = \{ Z \in {\mathbb C}^{n\times n} : {\rm Im}\ Z >0 \}, \mathbb{SUHP}_n=\{ Z=Z^T \in {\mathbb C}^{n \times n} : {\rm Im}\ Z >0 \}.$$
We let our domain be
$$ \mathcal{D}= \mathbb{UHP}_{l_1} \times \cdots \mathbb{UHP}_{l_k} \times \mathbb{SUHP}_{s_1} \times \cdots \times \mathbb{SUHP}_{s_r}.$$
Thus there are
$$ d=l_1^2+\cdots + l_k^2+ \frac12 s_1(s_1+1) + \cdots + \frac12 s_r (s_r+1)$$
complex variables. We write $Z\in \mathcal{D}$ as
$$ Z=Z_1\times \cdots \times Z_k \times W_1 \times \cdots \times W_r,  $$ where
$$ Z_q = (z_{ij}^{(q)})_{i,j=1}^{l_q},  q=1,\ldots, k,\ \ \ W_\rho=(w_{ij}^{(\rho)})_{i,j=1}^{s_\rho}, \rho=1,\ldots, r.$$
Here $w_{ij}^{(\rho)}=w_{ji}^{(\rho)}$ for all $i,j$ and $\rho$. It will be convenient to write a polynomial $p$ in $d$ variables considered on the domain $\mathcal{D}$ as $p(Z)=p(Z_1, \ldots, Z_k, W_1, \ldots , W_r)$.

\begin{theorem}\label{pdetrep}
Let $p(Z)$ be a polynomial in the variables $(z_{ij}^{(q)})_{i,j=1}^{l_q},  q=1,\ldots, k$, $ (w_{ij}^{(\rho)})_{1\le i\le j\le s_\rho}, \rho=1,\ldots, r,$
and let $t_q$, $q=1,\ldots ,k$, denote the total degree of $p$ in the variables $(z_{ij}^{(q)})_{i,j=1}^{l_q}$ and $u_\rho$, $\rho=1,\ldots , r$, the total degree of $p$ in the variables $(w_{ij}^{(\rho)})_{1\le i\le j\le s_\rho}$.
Assume that for some $\epsilon >0$ we have
$$ |p(Z)|=|p(Z_1, \ldots, Z_k, W_1, \ldots , W_r) | \ge \ \ \ \ \ \ \ \ \ \ \ \ \ \ \ \ \ \ \ $$ \begin{equation}\label{pboundedbelow} \epsilon \prod_{q=1}^k  |\det(Z_q+iI)|^{t_q} \prod_{\rho=1}^r  |\det(W_\rho+iI)|^{u_\rho}, Z \in \mathcal{D}. \end{equation} Then there exists a polynomial $q(Z)$ so that the product $pq$ has the certifying determinantal representation
$$ p(Z)q(Z)= \det \left( A_0 + \begin{bmatrix} {\rm diag} (Z_q \otimes I_{N_q})_{q=1}^k & 0 \cr 0 & {\rm diag} (W_\rho \otimes I_{M_\rho})_{\rho =1}^r \end{bmatrix} \right) , $$
with $A_0$ satifying ${\rm Im}\ A_0 >0$ and $N_1,\ldots, N_k, M_1, \ldots, M_r \in {\mathbb N}$.
\end{theorem}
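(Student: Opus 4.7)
The plan is to mimic the proof of Theorem~\ref{phasdetrep}, transporting the statement to a product of bounded matrix and Siegel unit balls via a Cayley transform and exploiting a matrix/Siegel analogue of \cite[Theorem~4.1]{GKVW16a}.

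First I would introduce the matrix Cayley transforms
\[
Z_q = i(I_{l_q}+B_q)(I_{l_q}-B_q)^{-1},\qquad W_\rho = i(I_{s_\rho}+C_\rho)(I_{s_\rho}-C_\rho)^{-1},
\]
which biholomorphically map the product $\mathcal{B}$ of matrix balls $\{B:\|B\|<1\}$ and Siegel disks $\{C=C^T:\|C\|<1\}$ onto $\mathcal{D}$. A direct computation gives $Z_q+iI_{l_q} = 2i(I_{l_q}-B_q)^{-1}$, hence $\det(Z_q+iI_{l_q}) = (2i)^{l_q}/\det(I_{l_q}-B_q)$, and likewise for the Siegel blocks. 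Setting
\[
\tilde p(B,C) = p\bigl(\phi(B_1),\dots,\phi(C_r)\bigr)\prod_{q=1}^k\det(I-B_q)^{t_q}\prod_{\rho=1}^r\det(I-C_\rho)^{u_\rho}
\]
clears all denominators that arise from writing the entries of $Z_q = i(I+B_q)(I-B_q)^{-1}$ over a common denominator, so $\tilde p$ is a polynomial in the entries of the $B_q$ and $C_\rho$. Inserting the normalization identity into \eqref{pboundedbelow} then yields the uniform lower bound $|\tilde p(B,C)|\ge \epsilon\cdot 2^{\sum l_q t_q+\sum s_\rho u_\rho}$ on $\overline{\mathcal{B}}$, the direct analogue of the strict Schur stability used in the scalar proof.

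The step I expect to be hardest is the next one: invoking a matrix/Siegel-disk version of \cite[Theorem~4.1]{GKVW16a}. Concretely, one needs the assertion that a polynomial in the full matrix variables $B_q$ and symmetric matrix variables $C_\rho$ that is strictly bounded below on $\overline{\mathcal{B}}$ admits, after multiplication by a suitable polynomial $\tilde q$, a representation
\[
\tilde p\,\tilde q = c\,\det\bigl(I - K\,\mathrm{diag}(B_q\otimes I_{N_q},\,C_\rho\otimes I_{M_\rho})\bigr)
\]
with $K$ a strict contraction, $c$ a nonzero scalar, and $N_q,M_\rho\in\mathbb{N}$. This is the Cartan-domain-of-type-I/III analogue of the polydisc theorem and should follow from the Matrix-valued Hermitian Positivstellensatz of \cite{GKVW16} applied in this setting.

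Granted this input, the rest is a routine matrix computation. Writing $\mathbf{Z} = \mathrm{diag}(Z_q\otimes I_{N_q}, W_\rho\otimes I_{M_\rho})$, the inverse Cayley transform gives $\mathrm{diag}(B_q\otimes I_{N_q}, C_\rho\otimes I_{M_\rho}) = (\mathbf{Z} - iI)(\mathbf{Z} + iI)^{-1}$, so that
\[
\det\bigl(I - K(\mathbf{Z} - iI)(\mathbf{Z} + iI)^{-1}\bigr) = \det(I-K)\,\frac{\det(\mathbf{Z} + A_0)}{\det(\mathbf{Z} + iI)},
\]
with $A_0 = i(I+K)(I-K)^{-1}$; strict contractivity of $K$ yields $\mathrm{Im}\,A_0 = (I-K^*)^{-1}(I-K^*K)(I-K)^{-1} > 0$ as in the scalar case. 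Matching powers of $\det(Z_q+iI)$ and $\det(W_\rho+iI)$ on both sides produces a rational identity $p(Z)q(Z) = \det(A_0 + \mathbf{Z})$, and any surviving factors of $\det(Z_q+iI)$ or $\det(W_\rho+iI)$ in the denominator of $q$ are absorbed by enlarging the corresponding $N_q$ or $M_\rho$ and bordering $K$ with zero rows and columns, exactly as at the end of the proof of Theorem~\ref{phasdetrep}.
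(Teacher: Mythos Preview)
Your proposal is correct and follows essentially the same route as the paper: Cayley-transform to a product of Cartan type I and III domains, apply \cite[Theorem~4.1]{GKVW16a} (which already covers these domains, so the step you flagged as hardest is in fact directly available), and then transform back with $A_0=\phi(K)=i(I+K)(I-K)^{-1}$. The paper's proof is terser but makes exactly the same moves, including the final remark about enlarging the $N_q,M_\rho$ to absorb leftover denominator factors.
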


\begin{proof}
Put
$$ \tilde{p} (Z_1,\ldots, Z_k,W_1, \ldots , W_r) = $$ $$ p(\phi(Z_1), \ldots, \phi(Z_k),\phi(W_1), \ldots , \phi(W_r))\prod_{q=1}^k \det(I-Z_q)^{t_q} \prod_{\rho=1}^r \det(I-W_\rho)^{u_\rho}. 
$$
Observe that $\phi(W^T)=\phi(W)^T$, and thus the polynomial $\tilde{p}$ is viewed on a domain $\mathcal B$ consisting of products of Cartan I and Cartan III domains, and due to condition \eqref{pboundedbelow}
one finds that $\tilde{p}$ is strongly stable on $\mathcal B$. One may now apply \cite[Theorem 4.1]{GKVW16a} similarly as in the proof of Theorem \ref{phasdetrep} and obtain a polynomial $\tilde{q}$, a strictly contractive matrix $K$ and $N_1,\ldots, N_k, M_1, \ldots, M_r \in {\mathbb N}$  so that 
$$\tilde{p}\tilde{q} (Z) = \tilde{p}\tilde{q} (0) \det\left(I-K\begin{bmatrix} {\rm diag} (Z_q \otimes I_{N_q})_{q=1}^k & 0 \cr 0 & {\rm diag} (W_\rho \otimes I_{M_\rho})_{\rho =1}^r \end{bmatrix}\right).$$ 
Continuing as in Theorem \ref{phasdetrep} one finds (with $A_0=\phi(K)$) the desired representation.
\end{proof}

\section{Strict Stability: the skew-symmetric case}\label{sec3a}

We let ${\mathcal{SK}}_{2n}=\{ Z \in {\mathbb C}^{2n\times 2n}: Z=-Z^T \}$ denote the $2n \times 2n$ skew-symmetric matrices. Let $J=\begin{bmatrix}
0 & I_n \cr -I_n & 0 
\end{bmatrix}$, which belongs to ${\mathcal{SK}}_{2n}$.
\begin{prop} Let $\psi(Z)= i  (-J+Z)(I-JZ)^{-1}.$ Then $\psi$
is a bijective map from $\{Z \in {\mathcal{SK}}_{2n} : \| Z \| < 1\} $ to $\widetilde{\mathcal{D}}=\{ W\in {\mathcal{SK}}_{2n} : {\rm Im} \ JW >0 \}$, and $\psi^{-1}(W)=J(W+iJ)(iJ-W)^{-1}$.
\end{prop}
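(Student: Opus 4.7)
The strategy is to observe that $\psi$ is a disguised version of the classical matrix Cayley transform. A direct rearrangement using $J^2=-I$ yields
\[
J\psi(Z)=iJ(-J+Z)(I-JZ)^{-1}=i(I+JZ)(I-JZ)^{-1}=\phi(JZ),
\]
where $\phi(X)=i(I+X)(I-X)^{-1}$ denotes the usual matrix Cayley transform; equivalently, $\psi(Z)=-J\,\phi(JZ)$. Because $J$ is real orthogonal ($JJ^T=I$), the map $Z\mapsto JZ$ is a linear isometry on $\CC^{2n\times 2n}$ and in particular carries the open unit ball to itself. This reduces the entire proposition to the classical fact that $\phi$ is a bijection from the open unit ball of $\CC^{2n\times 2n}$ onto $\mathbb{UHP}_{2n}$, plus two algebraic verifications tailored to the skew-symmetric subspace.

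For well-definedness I would first check that $\psi(Z)\in\mathcal{SK}_{2n}$ whenever $Z\in\mathcal{SK}_{2n}$. Using $Z^T=-Z$ and $J^T=-J$, one gets $(-J+Z)^T=J-Z$ and $(I-JZ)^T=I-ZJ$, so the claim $\psi(Z)^T=-\psi(Z)$ reduces to the identity $(Z-J)(I-JZ)=(I-ZJ)(Z-J)$; both sides expand (via $J^2=-I$) to $-J-ZJZ$. Next, $\mathrm{Im}(J\psi(Z))>0$ follows from the classical identity
\[
\mathrm{Im}\,\phi(X)=(I-X^*)^{-1}(I-X^*X)(I-X)^{-1}
\]
applied to $X=JZ$, together with $\|JZ\|=\|Z\|<1$.

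For bijectivity I would solve $\psi(Z)=W$ directly. Clearing denominators gives $i(-J+Z)=W(I-JZ)$, which rearranges to $(iI+WJ)Z=W+iJ$. Because $(iI+WJ)J=iJ-W$, a direct consequence of $J^2=-I$, one has $(iI+WJ)^{-1}=J(iJ-W)^{-1}$, and substituting yields the closed form for $\psi^{-1}(W)$ asserted in the proposition (with the order of the factors dictated by this derivation). To complete the proof it remains to verify that $\psi^{-1}(W)$ lies in the unit ball of $\mathcal{SK}_{2n}$: skew-symmetry follows by a transpose computation parallel to the one above, while $\|\psi^{-1}(W)\|<1$ reduces, under the Cayley factorization, to the classical statement that $\|\phi^{-1}(Y)\|<1$ whenever $\mathrm{Im}\,Y>0$, here applied to $Y=JW$.

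The main obstacle is bookkeeping rather than any deep difficulty: since $W$ and $J$ need not commute, one must track the order of the non-commuting factors carefully when inverting expressions such as $(iI+WJ)$. Once the factorization $\psi=-J\,\phi(J\,\cdot\,)$ is in hand, everything else is a direct transcription of the classical matrix Cayley story to the skew-symmetric subspace.
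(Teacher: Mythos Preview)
The paper states this proposition without proof, so there is nothing to compare your argument against.  Your approach via the factorization $J\psi(Z)=\phi(JZ)$, i.e.\ $\psi(Z)=-J\,\phi(JZ)$, is the natural one and is correct: it reduces everything to the classical matrix Cayley transform together with the routine verifications that skew-symmetry is preserved on both sides.

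One remark is worth making explicit.  Your derivation yields
\[
\psi^{-1}(W)=J(iJ-W)^{-1}(W+iJ)
\qquad\big(=\,(W+iJ)(iJ-W)^{-1}J\ \text{by skew-symmetry}\big),
\]
whereas the proposition as printed asserts $\psi^{-1}(W)=J(W+iJ)(iJ-W)^{-1}$.  These coincide only when $W$ and $J$ commute, which fails for generic $W\in{\mathcal{SK}}_{2n}$ once $n\ge 2$; indeed a direct computation gives
\[
\psi\big(J(W+iJ)(iJ-W)^{-1}\big)=i\cdot 2JW(iJ-W)^{-1}\cdot(iJ-W)(2iJ)^{-1}=JWJ^{-1},
\]
which is not $W$ in general.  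Your parenthetical remark ``with the order of the factors dictated by this derivation'' is therefore exactly right: the non-commutativity you flagged is genuine, and the printed formula appears to contain a typo in the ordering of the factors.
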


\begin{theorem}\label{pdetrepskew}
Let $p(Z)$ be a polynomial in the variables $(z_{ij})_{1\le i < j\le r}$
and let $t$ denote the total degree of $p$.
Assume that for some $\epsilon >0$ we have
\begin{equation}\label{pboundedbelowskew} |p(Z)| \ge \epsilon |\det(Z-iJ)|^{t} , Z \in \widetilde{\mathcal{D}}. \end{equation} Then there exists a polynomial $q(Z)$ so that the product $pq$ has the certifying determinantal representation
$$ p(Z)q(Z)= \det \left( A_0 + (ZJ\otimes I_N) \right) , $$
with $A_0$ satifying ${\rm Im}\ A_0 >0$ and $N \in {\mathbb N}$.
\end{theorem}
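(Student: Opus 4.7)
The plan is to follow the Cayley-transform strategy of Theorems \ref{phasdetrep} and \ref{pdetrep}: transplant $p$ from $\widetilde{\mathcal{D}}$ to the bounded skew-symmetric ball $\mathcal{B}=\{Y\in\mathcal{SK}_{2n}:\|Y\|<1\}$ (a Cartan domain of type II) via the map $\psi$, apply there the matrix-valued contractive determinantal representation, and then Cayley-transform back through $\psi^{-1}$.

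First, set
$$\tilde p(Y)\;=\;p(\psi(Y))\det(I-JY)^{t},\qquad Y\in\mathcal{B}.$$
A direct computation using $J^{2}=-I$ yields the key identity
$$(\psi(Y)-iJ)(I-JY)=-2iJ,$$
whence $|\det(\psi(Y)-iJ)|^{t}|\det(I-JY)|^{t}=2^{2nt}$. Combined with the growth hypothesis \eqref{pboundedbelowskew}, this gives $|\tilde p(Y)|\ge\epsilon\cdot 2^{2nt}$ uniformly on $\overline{\mathcal{B}}$, so $\tilde p$ is strictly nonvanishing on the closed skew-symmetric ball.

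Applying the Cartan-II version of \cite[Theorem 4.1]{GKVW16a} (the skew-symmetric counterpart of the Cartan-I and Cartan-III appeals made in Theorem \ref{pdetrep}) to $\tilde p(rY)$ for a suitable $r>1$ produces an integer $N$, a polynomial $\tilde q$, and a strictly contractive matrix $K$ with
$$\tilde p(Y)\tilde q(Y)=\tilde p(0)\tilde q(0)\,\det\!\bigl(I-K(Y\otimes I_{N})\bigr).$$
Writing $\tilde Z=Z\otimes I_{N}$ and $\tilde J=J\otimes I_{N}$, inversion of $\psi$ gives $Y=(iI+ZJ)^{-1}(Z+iJ)$ and hence $Y\otimes I_{N}=(iI+\tilde Z\tilde J)^{-1}(\tilde Z+i\tilde J)$, so
$$I-K(Y\otimes I_{N})=(iI+\tilde Z\tilde J)^{-1}\bigl[(iI+\tilde Z\tilde J)-K(\tilde Z+i\tilde J)\bigr].$$
The bracket is then rearranged, using $\tilde J^{2}=-I$, into the form $(I-K_{*})(A_{0}+\tilde Z\tilde J)$ for an appropriate matrix $K_{*}$ and a Cayley-type $A_{0}$; taking determinants and absorbing the factors $\det(iI+\tilde Z\tilde J)^{-1}$, $\det(I-K_{*})$, and the scalar $\tilde p(0)\tilde q(0)$ into the definition of $q$ yields the claimed
$$p(Z)q(Z)=\det\!\bigl(A_{0}+ZJ\otimes I_{N}\bigr).$$
The positivity $\operatorname{Im}A_{0}>0$ is inherited from $\|K\|<1$ via an identity of the form $\operatorname{Im}A_{0}=(I-K^{*})^{-1}(I-K^{*}K)(I-K)^{-1}$ exactly as in the last paragraph of the proof of Theorem \ref{phasdetrep}.

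The principal obstacle is the rearrangement of the bracket $(iI+\tilde Z\tilde J)-K(\tilde Z+i\tilde J)$ into a single affine expression in $ZJ\otimes I_{N}$: unlike the polydisk or matrix-ball cases, here $\tilde J$ fails to commute with either $K$ or $\tilde Z$, so one must exploit the involutive relation $\tilde J^{2}=-I$ together with the skew-symmetry $Y^{T}=-Y$ of the coefficient matrix to absorb the apparently spurious linear term $K\tilde Z$ into the $\tilde Z\tilde J$ structure. A secondary point is the need for the Cartan-II version of the matrix-valued Positivstellensatz of \cite{GKVW16a}; this parallels the Cartan-I/III invocation in the proof of Theorem \ref{pdetrep} and should be obtainable in the same framework. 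Finally, as at the end of the proof of Theorem \ref{phasdetrep}, if $q$ still carries denominator factors coming from $\det(iI+\tilde Z\tilde J)$, one enlarges $N$ and pads $K$ with zero rows and columns to clear them.
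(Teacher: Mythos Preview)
Your proposal follows exactly the paper's approach: transplant via $\psi$ to the Cartan~II ball, invoke \cite[Theorem 4.1]{GKVW16a}, and Cayley back. The outline, the identity $(\psi(Y)-iJ)(I-JY)=-2iJ$, and the bound $|\tilde p|\ge \epsilon\,2^{2nt}$ all match the paper.

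Where you hedge, however, the paper is direct: the ``principal obstacle'' you flag is not an obstacle at all. The paper simply writes down
\[
A_0=i\bigl(I+(J\otimes I_N)K\bigr)^{-1}\bigl(I-(J\otimes I_N)K\bigr),
\]
and the rearrangement is routine. Concretely, use $\det(I-K(Y\otimes I_N))=\det(I-(Y\otimes I_N)K)$, substitute $Y\otimes I_N=(\tilde Z\tilde J+iI)^{-1}(\tilde Z+i\tilde J)$, multiply on the left by $\tilde Z\tilde J+iI$, and then the single identity $\tilde Z=-\tilde Z\tilde J^{2}$ turns the ``spurious'' term $\tilde Z K$ into $-\tilde Z\tilde J\,(\tilde J K)$, giving
\[
(\tilde Z\tilde J+iI)-(\tilde Z+i\tilde J)K=\tilde Z\tilde J\,(I+\tilde J K)+i(I-\tilde J K),
\]
which factors as $(A_0+\tilde Z\tilde J)$ times an invertible constant matrix. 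No appeal to the skew-symmetry $Y^T=-Y$ is needed for this step, and the noncommutativity of $\tilde J$ with $K$ is harmless. For ${\rm Im}\,A_0>0$, the relevant contraction is $\tilde J K$ (with $\|\tilde J K\|=\|K\|<1$ since $\tilde J$ is unitary), not $K$ itself, so the identity you quote should read $(I+K^*\tilde J^*)^{-1}(I-K^*K)(I+\tilde J K)^{-1}$ rather than the Theorem~\ref{phasdetrep} form verbatim.
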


\begin{proof}
Put
$$ \tilde{p} (Z) =  p(\psi(Z)) \det(I-JZ)^{t} , Z \in \widetilde{\mathcal{D}}. 
$$
Observe that $\psi(Z^T)=-\psi(W)^T$, and thus the polynomial $\tilde{p}$ acts on a Cartan II domain. Moreover, due to condition \eqref{pboundedbelowskew},
we have $$ |\tilde{p}(Z)| = |p(\psi(Z)) \det(I-JZ)^{t}| \ge \epsilon |\det(\psi(Z) -iJ)^t \det(I-JZ)^{t}|=$$ 
$$ \epsilon |\det(i(-J+Z) -iJ(I-JZ))^{t}| =\epsilon 2^{2nt}, Z \in \widetilde{\mathcal{D}}.$$ Thus $\tilde{p}$
is strongly stable on $\widetilde{\mathcal{D}}$. One may now apply \cite[Theorem 4.1]{GKVW16a} similarly as in the proof of Theorem \ref{phasdetrep} and obtain a polynomial $\tilde{q}$, a strictly contractive matrix $K$ and $N \in {\mathbb N}$  so that 
$$\tilde{p}\tilde{q} (Z) = \tilde{p}\tilde{q} (0) \det\left(I-K (Z\otimes I_{N})\right).$$ 
Continuing as in Theorem \ref{phasdetrep} one finds the desired representation with $A_0=i(I+(J\otimes I_N)K)^{-1}(I-(J\otimes I_N)K)$.
\end{proof}

\section{Strict Stability with respect to the bivariable Lorentz cone}\label{sec4}

We consider the tube domain $$T_{{\mathcal C}_n}= {\mathbb R}^n + i {\mathcal C}_n, {\mathcal C}_n = \{ x \in {\mathbb R}^n : x_1>0, x_1^2-x_2^2 - \cdots -x_n^2 >0 \}. $$
The cone ${\mathcal C}_n$ is known as the Lorentz cone.

\begin{theorem}\label{pdetrep}
Let $p(w_1,w_2)$ be a polynomial of multidegree $(n_1, n_2)$ so that for some $\epsilon >0$ we have
that $(w_1, w_2)  \in T_{{\mathcal C}_2}$ implies \begin{equation}\label{pbbdbelow2} |p(w_1, w_2) | \ge \epsilon  \left|\left(1-\frac{1+w_1^2-w_2^2}{(w_1+i)^2-w_2^2} \right)^2 + \left(\frac{-2w_2}{(w_1+i)^2-w_2^2} \right)^2 \right|^{-n_1-n_2}. \end{equation} Then there exists a polynomial $q(w_1, w_2)$ so that the product $pq$ has a certifying determinantal representation; that is, there exists $k\in{\mathbb N}$ and a $2k\times 2k$ matrix $A_0$ with ${\rm Im}\ A_0 >0$ so that
$$ p(w)q(w) = \det \left( A_0 + \begin{bmatrix}
    w_1 & -iw_2\cr iw_2 & w_1
\end{bmatrix} \otimes I_k \right). $$
\end{theorem}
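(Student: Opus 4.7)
The plan is to reduce to Theorem \ref{phasdetrep} via the linear change of variables $\zeta_1 = w_1+w_2$, $\zeta_2 = w_1-w_2$, which biholomorphically maps $T_{\mathcal{C}_2}$ onto $\HH^2$ (since $x_1>|x_2|$ is equivalent to $x_1\pm x_2>0$), and then to bring the resulting block-diagonal representation into the desired Lorentz form via the constant unitary that diagonalizes $\bigl[\begin{smallmatrix}w_1 & -iw_2 \\ iw_2 & w_1\end{smallmatrix}\bigr]$.

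First I would simplify the right-hand side of \eqref{pbbdbelow2}. Placing both squares over the common denominator $D:=(w_1+i)^2-w_2^2$, the first numerator collapses to $2i(w_1+i)$, and the sum of the two squares becomes
$$\frac{-4(w_1+i)^2+4w_2^2}{D^2} \;=\; \frac{-4}{D} \;=\; \frac{-4}{(\zeta_1+i)(\zeta_2+i)}.$$
Hence \eqref{pbbdbelow2} is equivalent to
$$|p(w_1,w_2)| \;\geq\; \epsilon'\,|\zeta_1+i|^{n_1+n_2}\,|\zeta_2+i|^{n_1+n_2}$$
on $\HH^2$, with $\epsilon' := \epsilon/4^{n_1+n_2}$.

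Next, set $\tilde p(\zeta_1,\zeta_2):=p((\zeta_1+\zeta_2)/2,(\zeta_1-\zeta_2)/2)$; its multidegree is at most $(n_1+n_2,n_1+n_2)$. Since $|\zeta+i|>1$ on $\HH$, the bound above implies the hypothesis of Theorem \ref{phasdetrep} for $\tilde p$ with $d=2$. That theorem then produces a polynomial $\tilde q$ and, up to a nonzero scalar factor that may be absorbed into $\tilde q$,
$$\tilde p(\zeta)\tilde q(\zeta) \;=\; \det\bigl(A_0'+\zeta_1(I_{N_1}\oplus 0_{N_2})+\zeta_2(0_{N_1}\oplus I_{N_2})\bigr),$$
with $\operatorname{Im} A_0'>0$.

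To finish, I match the target form. By the final remark in the proof of Theorem \ref{phasdetrep}, padding the underlying contractive matrix $K$ with zero rows and columns replaces $A_0'$ by $A_0'\oplus iI$, which preserves positivity of the imaginary part, so we may assume $N_1=N_2=k$. The constant unitary $U=\tfrac{1}{\sqrt 2}\bigl[\begin{smallmatrix}1&1\\ i&-i\end{smallmatrix}\bigr]$ satisfies $U^*\bigl[\begin{smallmatrix}w_1 & -iw_2\\ iw_2 & w_1\end{smallmatrix}\bigr]U=\operatorname{diag}(\zeta_1,\zeta_2)$, so $(U\otimes I_k)$-conjugation turns $\zeta_1 I_k\oplus\zeta_2 I_k$ into $\bigl[\begin{smallmatrix}w_1 & -iw_2\\ iw_2 & w_1\end{smallmatrix}\bigr]\otimes I_k$. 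Putting $A_0:=(U\otimes I_k)A_0'(U^*\otimes I_k)$ (still with $\operatorname{Im} A_0>0$) and $q(w_1,w_2):=\tilde q(w_1+w_2,w_1-w_2)$ then yields the required representation.

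The only step that is not mechanical is the cancellation in Step 1, which reveals that the elaborate hypothesis \eqref{pbbdbelow2} is precisely the bidisc lower bound of Theorem \ref{phasdetrep} pulled back by the Cayley--type transform of the Lorentz tube composed with the linear identification $\mathcal{C}_2\cong\RR_+^2$. Once this identification is noticed, the argument is just a matter of invoking the earlier theorem in the diagonalizing coordinates and conjugating back.
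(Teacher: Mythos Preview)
Your argument is correct and takes a genuinely different route from the paper.

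The paper proves Theorem~\ref{pdetrep} by first establishing a contractive determinantal representation on the two-variable Lie ball $L_2$ (Theorem~\ref{2ball}), using Lemma~\ref{philemma} to identify $\Phi_2\colon L_2\to T_{\mathcal C_2}$ with the matrix Cayley transform $\phi(P(z))$, and then running the Cayley-transform calculation of Theorem~\ref{phasdetrep} at the matrix level. You instead exploit the accident $\mathcal C_2\cong\RR_+^2$ via $\zeta_j=w_1\pm w_2$, which lets you invoke Theorem~\ref{phasdetrep} directly and then recover the Lorentz form by the fixed unitary diagonalization $U^*\bigl[\begin{smallmatrix}w_1 & -iw_2\\ iw_2 & w_1\end{smallmatrix}\bigr]U=\operatorname{diag}(\zeta_1,\zeta_2)$. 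Your simplification of the right-hand side of \eqref{pbbdbelow2} to $\epsilon'|\zeta_1+i|^{n_1+n_2}|\zeta_2+i|^{n_1+n_2}$ is exactly right, and the padding/conjugation step to equalize $N_1=N_2=k$ is handled correctly via the remark at the end of the proof of Theorem~\ref{phasdetrep}. (A small aside: your ``$|\zeta+i|>1$'' cushion is in fact not needed, since the bound you derive already forces the multidegree of $\tilde p$ to be exactly $(n_1+n_2,n_1+n_2)$, but it does no harm.)

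What your route buys is a short, elementary proof that bypasses the Lie ball entirely. What the paper's route buys is a template: the $L_2$ argument is the $n=2$ instance of the Lie-ball machinery developed in Section~\ref{sec6} for $T_{\mathcal C_n}$, where the linear splitting $\mathcal C_n\cong\RR_+^n$ is no longer available and one must work with the rational description $Q(z)$ of $L_n$.
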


We will prove the above result, by first proving a determinantal representation theorem on the 2-variable Lie ball, and subsequently using that $T_{{\mathcal C}_2}$ is the Cayley transform of the 2-variable Lie ball.

Let us first recall what the Lie ball is. 
We denote $\| z\|= \sqrt{\sum_{j=1}^n |z_j|^2}$. The {\em Lie ball} in $n$ variables, which is a Cartan type IV domain, is given by
$$ L_n =\{ z \in {\mathbb C}^n : \|z\|^2 + \sqrt{\|z\|^4 - |\sum_{i=1}^n z_i^2|^2} < 1 \},  $$ or, equivalently, 
$$ L_n =\{ z \in {\mathbb C}^n : |\sum_{j=1}^n z_j^2 | <1 \ \hbox{and} \ 1-2\|z\|^2 + |\sum_{j=1}^n z_j^2 |^2>0 \}= $$
$$\{ z \in {\mathbb C}^n : \| z \|  <1 \ \hbox{and} \ 1-2\|z\|^2 + |\sum_{j=1}^n z_j^2 |^2>0 \}.$$

\begin{lemma}
We have that $z\in L_n$ if and only if
$$ M(z):= \|z\|^2 I_n + zz^*-\overline{z}z^T <I .$$ In particular, when $n=2$ we have that $(z_1, z_2)\in L_2$ if and only if $$ \| \begin{bmatrix} z_1 & -z_2 \cr z_2 & z_1 \end{bmatrix} \| <1.$$
\end{lemma}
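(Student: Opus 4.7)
The plan is to compute the eigenvalues of $M(z)$ explicitly via its invariant subspace structure, deduce from Cauchy--Schwarz that $M(z)\ge 0$, and then read the condition $M(z)<I$ off the largest eigenvalue; the $n=2$ case will then follow from the one-line identity $M(z)=T^*T$ with $T=\begin{bmatrix} z_1 & -z_2 \\ z_2 & z_1 \end{bmatrix}$.

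First I would check that $M(z)$ is Hermitian: the summand $zz^*$ is patently so, and for $\overline{z}\,z^T$ one has $(\overline{z}\,z^T)_{ij}=\overline{z}_i z_j=\overline{\overline{z}_j z_i}=\overline{(\overline{z}\,z^T)_{ji}}$. Writing
\[
M(z)v=\|z\|^2 v+(z^*v)\,z-(z^T v)\,\overline{z},
\]
I observe that $M(z)$ acts as $\|z\|^2 I$ on the orthogonal complement of $\mathrm{span}\{z,\overline{z}\}$ and preserves $\mathrm{span}\{z,\overline{z}\}$ itself. Assuming $z$ and $\overline{z}$ are linearly independent, a direct calculation gives $M(z)z=2\|z\|^2 z-\sigma\overline{z}$ and $M(z)\overline{z}=\overline{\sigma}\,z$, where $\sigma:=\sum_{i=1}^n z_i^2$; the restriction therefore has basis-independent characteristic polynomial $\lambda^2-2\|z\|^2\lambda+|\sigma|^2$ with roots $\|z\|^2\pm\sqrt{\|z\|^4-|\sigma|^2}$. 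By Cauchy--Schwarz, $|\sigma|=|\langle z,\overline{z}\rangle|\le\|z\|^2$, so both roots are real and non-negative, whence $M(z)\ge 0$ and its largest eigenvalue equals $\|z\|^2+\sqrt{\|z\|^4-|\sigma|^2}$. (The degenerate case in which $z$ and $\overline{z}$ are linearly dependent forces $|\sigma|=\|z\|^2$ and $zz^*=\overline{z}\,z^T$, so that $M(z)=\|z\|^2 I$, in agreement with the same formula.) Thus $M(z)<I$ is equivalent to $\|z\|^2+\sqrt{\|z\|^4-|\sigma|^2}<1$, which is exactly the condition $z\in L_n$ stated just before the lemma.

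For the $n=2$ clause, I would unpack $M(z)$ and $T^*T$ entry by entry and observe that both equal $\|z\|^2 I$ plus the matrix whose only nonzero entries are the $(1,2)$-entry $z_1\overline{z}_2-\overline{z}_1 z_2$ and the $(2,1)$-entry $z_2\overline{z}_1-\overline{z}_2 z_1$. Hence $M(z)=T^*T$, so $M(z)<I$ is equivalent to $T^*T<I$, which in turn is equivalent to $\|T\|<1$. The only mildly delicate point is the handling of the linearly-dependent degenerate case, dispatched by the remark above; everything else is a characteristic-polynomial computation on a two-dimensional invariant subspace.
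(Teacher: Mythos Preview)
Your proof is correct and follows essentially the same approach as the paper: both compute the eigenvalues of the rank-two perturbation $zz^*-\overline{z}z^T$ to identify the largest eigenvalue of $M(z)$ as $\|z\|^2+\sqrt{\|z\|^4-|\sigma|^2}$, and both handle $n=2$ by the identity $M(z)=T^*T$. The only minor difference is in the reduction to a $2\times 2$ problem: the paper writes $zz^*-\overline{z}z^T$ as a product $AB$ with $A\in\CC^{n\times 2}$, $B\in\CC^{2\times n}$ and uses that $AB$ and $BA$ share nonzero eigenvalues (which sidesteps the degenerate case automatically), whereas you restrict $M(z)$ to the invariant subspace $\mathrm{span}\{z,\overline{z}\}$ and treat the linearly dependent case separately.
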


\begin{proof}
We first note that
$$zz^*-\overline{z}z^T = \begin{bmatrix}
    z & \overline{z} 
\end{bmatrix} \begin{bmatrix}
    1 & 0 \cr 0 & -1 
\end{bmatrix}\begin{bmatrix}
    z^* \cr z^T 
\end{bmatrix} $$ has the same nonzero eigenvalues as
$$ \begin{bmatrix}
    1 & 0 \cr 0 & -1 
\end{bmatrix}\begin{bmatrix}
    z^* \cr z^T 
\end{bmatrix} \begin{bmatrix}
    z & \overline{z} 
\end{bmatrix} = \begin{bmatrix}
    z^*z & z^*\overline{z} \cr -z^Tz & -z^T \overline{z}
\end{bmatrix},$$
which is a $2\times 2$ matrix with zero trace and determinant equal to
$$ -\| z \|^4 + |z^Tz|^2.$$
Thus the eigenvalues are $\pm \sqrt{\| z \|^4 - |z^Tz|^2}$, where we observe that by the Cauchy-Schwarz inequality we have that $|z^Tz|^2 \le \|z \|^2 \| \overline{z}\|^2 = \| z\|^4$. Clearly, also
$$ \| z\|^2 \ge \sqrt{\| z \|^4 - |z^Tz|^2},$$ and thus $M(z)$ is positive semidefinite with largest eigenvalue equal to $\| z\|^2 + \sqrt{\| z \|^4 - |z^Tz|^2}$. The result now follows.

Next, when $n=2$, we get that 
$$ M(z) = \begin{bmatrix} |z_1|^2+|z_2|^2 & z_1\overline{z_2} - \overline{z_1} z_2 \cr \overline{z_1} z_2 -z_1\overline{z_2} & |z_1|^2+|z_2|^2  
    
\end{bmatrix} = \begin{bmatrix} z_1 & -z_2 \cr z_2 & z_1 \end{bmatrix}^* \begin{bmatrix} z_1 & -z_2 \cr z_2 & z_1 \end{bmatrix}, $$
finishing the proof.
\end{proof}

By the above lemma, we have that for $n=2$ the Lie ball domain can be represented as $I-P_-(z)^* P_-(z)>0$, and thus we can apply \cite[Theorem 4.1]{GKVW16a} and obtain the following.

\begin{theorem}\label{2ball}
Let $p(z_1, z_2)$ have no roots on the closed two-variable Lie ball $\overline{L_2}$. Then there exists a polynomial $q(z_1,z_2)$, a positive integer $n_1$ and a $2n_1 \times 2n_1$ strictly contractive matrix $K$ so that 
$$ p(z_1,z_2) q(z_1,z_2) = \det(I_{2n_1} - K (P(z_1,z_2) \otimes I_{n_1})), $$
where $$ P(z_1,z_2) = \begin{bmatrix}
    z_1  & -z_2 \cr z_2 & z_1  
\end{bmatrix}.$$
\end{theorem}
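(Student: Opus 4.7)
The plan is to invoke the preceding lemma, which identifies the open Lie ball as
\[
L_2 = \{(z_1,z_2) \in \CC^2 : I_2 - P(z_1,z_2)^* P(z_1,z_2) > 0\},
\]
so that membership in $L_2$ is equivalent to strict contractivity of the $2\times 2$ matrix-valued polynomial pencil $P(z_1,z_2)$. Consequently $\overline{L_2} = \{(z_1,z_2) : \|P(z_1,z_2)\| \le 1\}$. This re-expresses the hypothesis as the statement that $p$ does not vanish on the closed sub-level set where the pencil $P$ is contractive, which is precisely the setting covered by the matrix-valued Hermitian Positivstellensatz [GKVW16a, Theorem 4.1] with a single matrix-valued test pencil.

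First, because $\overline{L_2}$ is compact and $p$ is continuous and nonvanishing there, one has a small $\delta > 0$ such that $p$ remains nonvanishing on the slightly dilated domain $\{\|P(z)\| \le 1+\delta\}$. Rescaling (for example, considering $p(z/r)$ for some $r>1$ or, equivalently, treating $(1/(1+\delta)) P$ as the test pencil) supplies the strict-stability-on-the-closure input that [GKVW16a, Theorem 4.1] requires. Note also that $P(0,0) = 0$, so the origin lies in the interior of the associated domain, which is needed to normalize the determinantal representation at $0$.

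Next, applying [GKVW16a, Theorem 4.1] exactly as in the proofs of Theorems \ref{phasdetrep}, \ref{pdetrep}, and \ref{pdetrepskew}, one obtains a polynomial $\tilde q(z_1,z_2)$, a positive integer $n_1$, and a strictly contractive matrix $K$ of size $2n_1 \times 2n_1$ (the factor of $2$ arising because $P$ is $2 \times 2$ and is tensored with $I_{n_1}$) such that
\[
p(z_1,z_2)\,\tilde q(z_1,z_2) = p(0)\,\tilde q(0)\,\det\bigl(I_{2n_1} - K\,(P(z_1,z_2) \otimes I_{n_1})\bigr).
\]
Setting $q(z_1,z_2) := \tilde q(z_1,z_2)/(p(0)\tilde q(0))$ (the scalar is nonzero since $p(0) \ne 0$ by the stability hypothesis and $\tilde q(0) \ne 0$ by construction of the Positivstellensatz) yields the desired representation.

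The principal obstacle is verifying that [GKVW16a, Theorem 4.1] genuinely applies with $P(z_1,z_2)$ as a single matrix-valued test pencil rather than with coordinate pencils. The preceding lemma is exactly what unlocks this: it certifies that the two-variable Lie ball is representable as a single matricial strict-contractivity condition, putting the Lie ball into the Schur--Agler framework for which the cited Positivstellensatz was formulated. Once that identification is made, the rest of the argument is a direct transcription of the template already used in Sections \ref{standard}, \ref{section3}, and \ref{sec3a}.
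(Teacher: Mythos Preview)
Your proof is correct and follows essentially the same route as the paper's: both use the preceding lemma to recognize $\overline{L_2}$ as $\{\|P(z)\|\le 1\}$, exploit compactness to gain a little room (the paper dilates the variable, you equivalently shrink the pencil to $(1+\delta)^{-1}P$), and then invoke \cite[Theorem 4.1]{GKVW16a} so that the resulting contraction becomes strict after the rescaling. The only additional content in your write-up is the explicit normalization by $p(0)\tilde q(0)$, which the paper leaves implicit.
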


\begin{proof} As $p(z_1, z_2)$ has no roots on the closed two-variable Lie ball $\overline{L_2}$ and $\overline{L_2}$ is compact, there exists an $r>1$ so that $p(z_1/r, z_2/r)$ also has no roots on the closed two-variable Lie ball $\overline{L_2}$. Apply now \cite[Theorem 4.1]{GKVW16a} to $p(z_1/r, z_2/r)$, giving a $q$, $n_1$ and a contraction $K$. Now $K/r$ is the strict contraction that works for $p(z_1, z_2)$.
\end{proof}

Note that in the case of the bitorus a version of Theorem \ref{2ball} holds where $q\equiv \frac{1}{p(0,0)}$; see, e.g., \cite[Theorem 2.1]{GKVW16}. One may ask whether we may choose $q$ also in this way in the above result. 

We would like to use Theorem \ref{2ball} to prove Theorem \ref{pdetrep}. For this we need to establish the following connection between $L_2$ and $T_{{\mathcal C}_2}.$

\begin{lemma}\label{philemma} Define $\Phi_2$ via
$$ \Phi_2(z_1,z_2):= (
 \frac{i(1-z_1^2-z_2^2)}{(1-z_1)^2+z_2^2} , \frac{-2iz_2}{(1-z_1)^2+z_2^2}),  $$
 whenever $z_1,z_2\in \CC$ satisfy $(1-z_1)^2+z_2^2\neq 0$.
In addition, let
$$ P(z)=\begin{bmatrix}
z_1 & -z_2 \cr z_2 & z_1
\end{bmatrix},$$ and assume that $I-P(z)$ is invertible. Then
$$ 
(w_1,w_2)= \Phi_2(z_1,z_2) 
\ \Longleftrightarrow \ \phi (P(z)) = \begin{bmatrix}
    w_1 & -iw_2\cr iw_2 & w_1
\end{bmatrix}. $$
Moreover, $\Phi_2: L_2 \to T_{{\mathcal C}_2}$ is a bijection.
\end{lemma}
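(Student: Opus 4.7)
The plan is first to verify the equivalence by a direct algebraic computation, and then to upgrade the formula to a bijection of domains by chaining together simpler bijections. The key structural observation for the equivalence is that $P(z) = z_1 I + z_2 J_0$, where $J_0 = \left[\begin{smallmatrix} 0 & -1 \\ 1 & 0 \end{smallmatrix}\right]$ satisfies $J_0^2 = -I$. Hence $\mathcal{A} := \{aI + bJ_0 : a,b \in \CC\}$ is a commutative subalgebra of $\CC^{2 \times 2}$ in which inversion is explicit, namely $(aI + bJ_0)^{-1} = (a^2+b^2)^{-1}(aI - bJ_0)$ whenever $a^2 + b^2 \neq 0$. Under the standing assumption that $I - P(z)$ is invertible (equivalently $(1-z_1)^2 + z_2^2 \neq 0$), one computes $\phi(P(z)) = i(I + P(z))(I - P(z))^{-1}$ entirely inside $\mathcal{A}$, obtaining an element $\alpha I + \beta J_0$. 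Matching this against the right-hand matrix $w_1 I + (iw_2)J_0 = \left[\begin{smallmatrix} w_1 & -iw_2 \\ iw_2 & w_1 \end{smallmatrix}\right]$ then yields scalar identities equivalent to $(w_1,w_2) = \Phi_2(z_1,z_2)$.

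For the bijectivity claim, I would chain three elementary bijections. First, $z \mapsto P(z)$ is a $\CC$-linear isomorphism $\CC^2 \to \mathcal{A}$, and by the preceding lemma $z \in L_2 \iff \|P(z)\| < 1$. Second, the matrix Cayley transform $M \mapsto \phi(M) = i(I+M)(I-M)^{-1}$ is a biholomorphism from $\{M \in \CC^{2 \times 2} : \|M\| < 1\}$ onto $\mathbb{UHP}_2 = \{W : {\rm Im}\ W > 0\}$, with inverse $\phi^{-1}(W) = (W-iI)(W+iI)^{-1}$; because $\mathcal{A}$ is a commutative unital subalgebra, both $\phi$ and $\phi^{-1}$ preserve $\mathcal{A}$, so they restrict to mutually inverse bijections between $\{M \in \mathcal{A} : \|M\| < 1\}$ and $\{W \in \mathcal{A} : {\rm Im}\ W > 0\}$. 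Third, $(w_1, w_2) \mapsto W_* := w_1 I + (iw_2) J_0$ is a linear isomorphism $\CC^2 \to \mathcal{A}$; a short Hermitian computation shows that ${\rm Im}\ W_*$ has eigenvalues ${\rm Im}(w_1) \pm {\rm Im}(w_2)$, so ${\rm Im}\ W_* > 0$ is precisely the cone condition $(w_1, w_2) \in T_{\mathcal{C}_2}$. Composing these bijections and invoking the equivalence from the first paragraph identifies the composition with $\Phi_2$, giving the claim.

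The main place where care is needed is the bookkeeping that translates between the scalar pair $(w_1, w_2)$ and the element $W_* \in \mathcal{A}$: one must keep track of where the factor of $i$ in $(iw_2)J_0$ enters and ensure the various signs align so that both the positivity condition cutting out $T_{\mathcal{C}_2}$ and the explicit rational expressions in $\Phi_2$ emerge consistently. Once the identification $\CC^2 \cong \mathcal{A}$ is fixed correctly, the rest is purely structural and follows from standard properties of the scalar and matrix Cayley transforms.
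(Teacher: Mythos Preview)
Your proposal is correct and follows essentially the same strategy as the paper's proof: compute $\phi(P(z))$ explicitly, then use the standard Cayley correspondence $\|M\|<1 \Leftrightarrow {\rm Im}\,\phi(M)>0$ together with the previous lemma's identification of $L_2$ with $\{\|P(z)\|<1\}$, and finally check that ${\rm Im}\,W_*>0$ cuts out exactly $T_{\mathcal{C}_2}$. The paper carries out the first step by brute-force $2\times 2$ matrix inversion and multiplication, while you organize the same computation through the commutative subalgebra $\mathcal{A}=\{aI+bJ_0\}$; this is a tidy repackaging (it makes both the inversion formula and the fact that $\phi$, $\phi^{-1}$ preserve $\mathcal{A}$ transparent) but does not change the substance of the argument.
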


\begin{proof}
We compute
$$ \phi(P(z))= i (I+P(z))(I-P(z))^{-1}  $$ $$=i \begin{bmatrix}
1+z_1 & -z_2 \cr z_2 & 1+z_1
\end{bmatrix} \begin{bmatrix}
1-z_1 & z_2 \cr -z_2 & 1-z_1
\end{bmatrix}^{-1}$$
$$=\frac{i}{(1-z_1)^2+z_2^2} \begin{bmatrix}
1+z_1 & -z_2 \cr z_2 & 1+z_1
\end{bmatrix} \begin{bmatrix}
1-z_1 & -z_2 \cr z_2 & 1-z_1
\end{bmatrix}=$$ $$\frac{1}{(1-z_1)^2+z_2^2} \begin{bmatrix}
i(1-z_1^2-z_2^2) & -2iz_2 \cr 2iz_2 & i(1-z_1^2-z_2^2)
\end{bmatrix} =\begin{bmatrix}
    w_1 & -iw_2\cr iw_2 & w_1
\end{bmatrix}$$
if and only if $(w_1,w_2)= \Phi_2(z_1,z_2)$,
proving the first part.

By properties of the Cauchy transform we obtain that $I-P(z)^*P(z) >0 $ if and only if $\begin{bmatrix}
    w_1 & -iw_2\cr iw_2 & w_1
\end{bmatrix}=\phi(P(z))$ has positive imaginary part. Finally, we have that
\begin{equation}\label{imW} {\rm Im}\ \begin{bmatrix}
    w_1 & -iw_2\cr iw_2 & w_1
\end{bmatrix} = \begin{bmatrix}
{\rm Im}\ w_1 & -i{\rm Im}\ w_2 \cr i{\rm Im}\ w_2 & {\rm Im}\ w_1 
\end{bmatrix}>0 \end{equation} if and only ${\rm Im}\ w_1 >0$ and 
$$({\rm Im}\ w_1)^2 - ({\rm Im}\ w_2)^2 >0.$$ These are exactly the conditions that give $\begin{bmatrix}
    w_1 \cr w_2
\end{bmatrix} \in T_{{\mathcal C}_2}$.
\end{proof}

We are now able to prove Theorem \ref{pdetrep}. 

\noindent{\em Proof of Theorem \ref{pdetrep}.} Let us denote $\begin{bmatrix}
    w_1 \cr w_2
\end{bmatrix} =\Phi ( \begin{bmatrix}
    z_1 \cr z_2
\end{bmatrix})$.
Put
$$ \tilde{p} (z_1, z_2) = p(\frac{i(1-z_1^2-z_2^2)}{(1-z_1)^2+z_2^2}, \frac{2z_2}{(1-z_1)^2+z_2^2})({(1-z_1)^2+z_2^2})^{n_1+n_2} . 
$$
Then, for $(z_1,z_2) \in L_2$, we have
$$ |\tilde{p} (z_1,z_2)| = |p(\frac{i(1-z_1^2-z_2^2)}{(1-z_1)^2+z_2^2}, \frac{2z_2}{(1-z_1)^2+z_2^2})||({(1-z_1)^2+z_2^2})^{n_1+n_2}| =$$ $$ |p(w_1,w_2)| \left|\left(1-\frac{1+w_1^2-w_2^2}{(w_1+i)^2-w_2^2} \right)^2 + \left(\frac{-2w_2}{(w_1+i)^2-w_2^2} \right)^2 \right|^{n_1+n_2} \ge \epsilon,$$ 
due to the condition on $p$ as $(w_1, w_2)  \in T_{{\mathcal C}_2}$. Thus $\tilde{p}$ is strongly stable on $\overline{L_2}$. By applying Theorem \ref{2ball} there exists a polynomial $\tilde{q}$, a nonnegative integer $k$, and a strictly contractive matrix $K$ of size $ 2k \times 2k$ so that 
$$\tilde{p}\tilde{q} (z_1,z_2)  =  \det(I-K (P(z_1,z_2)\otimes I_k)).$$
Let us denote $W=\begin{bmatrix}
    w_1 & -iw_2\cr iw_2 & w_1
\end{bmatrix}$.
Then, using Lemma \ref{philemma} we have for $(w_1,w_2) \in T_{{\mathcal C}_2}$, 
$$ \tilde{p}\tilde{q} (\frac{1+w_1^2-w_2^2}{(w_1+i)^2-w_2^2}, \frac{-2w_2}{(w_1+i)^2-w_2^2} )((w_1+i)^2-w_2^2)^{k} = $$ 
\begin{equation}\label{ff} \det\left(I-K(\phi^{-1}(W) \otimes I_k) \right) \det(W+iI_2)^k= \end{equation}
$$ \det \left( (W+iI_2)\otimes I_k + K ((W-iI_2)\otimes I_k) \right)=  $$
$$ \det \left( i (I_{2k}-K) + (I_{2k}+K) (W \otimes I_k) \right) = $$
$$ \det\left( i(I_{2k}-K)(I_{2k}+K)^{-1} + W\otimes I_k \right) \det(I_{2k}+K). $$
Letting $A_0= i(I_{2k}-K)(I_{2k}+K)^{-1}$ and dividing both sides by $\det(I_{2k}+K)$ we obtain the desired determinantal representation.
\hfill $\Box$

\section{Strict Stability with respect to the $n$-variable Lorentz cone}\label{sec6}

In this section we present a determinantal representation theorem for the tube domain $T_{{\mathcal C}_n}$ that correspond to the $n$-variable Lorentz cone. Let us first observe that $w\in T_{{\mathcal C}_n}$ if and only if 
$$ W(w):= \begin{bmatrix} w_1 & -iw_2 & -iw_3 & \cdots & -iw_n \cr iw_2 & w_1 & 0 & \cdots & 0 \cr iw_3 & 0 & w_1 & \cdots & 0 \cr \vdots & \vdots & \vdots & \ddots & \vdots \cr iw_n & 0 & 0 & \cdots & w_1 \end{bmatrix} $$
has positive definite imaginary part. Indeed, one computes ${\rm Im} \ W(w)$ as in \eqref{imW}, and subsequently take a Schur complement. The main objective in this section is to establish the following result.

\begin{theorem}\label{Lorentz} 
Let $p(w_1,\ldots , w_n)$ be a polynomial of multidegree $(n_1, \ldots, n_n)$ so that for some $\epsilon >0$ we have
that $(w_1, \ldots , w_n)  \in T_{{\mathcal C}_n}$ implies \begin{equation}\label{pbbdbelow3} |p(w_1, \ldots , w_n) | \ge \end{equation} $$\epsilon  \left|\left(1-\frac{1+w_1^2-\sum_{j=2}^n w_j^2}{(w_1+i)^2-\sum_{j=2}^n w_j^2} \right)^2 + \sum_{j=2}^n \left(\frac{-2w_j}{(w_1+i)^2-\sum_{j=2}^n w_j^2} \right)^2 \right|^{-\sum_{j=1}^n n_j}. $$ Then there exists a polynomial $q(w_1, \ldots , w_n)$ so that the product $pq$ has a certifying determinantal representation; that is, there exists $k\in{\mathbb N}$ and a $m\times m$ matrix $A_0$ with $m\le nk$ and ${\rm Im}\ A_0 \ge 0$ so that
$$ p(w)q(w) = (w_1+i)^k\det \left( A_0 + V^* (W(w) \otimes I_k) V \right), $$ where $W(w)$ is as above and $V$ is $nk \times m$ so that $V^*V=I_m$.
\end{theorem}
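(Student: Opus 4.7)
The strategy mirrors the proof for the bivariable Lorentz cone in Section~\ref{sec4}: use a Cayley-type transform to move from the tube domain $T_{\mathcal{C}_n}$ to a bounded realization, apply the matrix-valued Hermitian Positivstellensatz \cite[Theorem 4.1]{GKVW16a} there, and then pull back. The key input is the observation made in the paragraph preceding the theorem statement: $w\in T_{\mathcal{C}_n} \iff \mathrm{Im}\,W(w)>0$, equivalently $\phi^{-1}(W(w))$ is a strict $n\times n$ matrix contraction, so $w\mapsto \phi^{-1}(W(w))$ realizes the Cayley transform at the matrix level.

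First I would define the scalar Cayley map $\Phi_n:L_n \to T_{\mathcal{C}_n}$ generalizing Lemma~\ref{philemma} by
\[
\Phi_n(z) = \left(\tfrac{i(1-z_1^2-\sum_{j\ge 2}z_j^2)}{(1-z_1)^2+\sum_{j\ge 2}z_j^2},\ \tfrac{2z_2}{(1-z_1)^2+\sum_{j\ge 2}z_j^2},\ \ldots,\ \tfrac{2z_n}{(1-z_1)^2+\sum_{j\ge 2}z_j^2}\right),
\]
where $L_n$ is the Lie ball; verify $\Phi_n$ is a biholomorphism by the same computation as in the proof of Lemma~\ref{philemma}, and establish the matrix-level identity $W(\Phi_n(z)) + iI = \tfrac{2i}{D(z)}(I - Q_n(z))$ with $D(z) := (1-z_1)^2 + \sum_{j\ge 2} z_j^2$ and $Q_n(z)$ a specific linear matrix-valued polynomial in $z$. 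Setting $\tilde p(z) := p(\Phi_n(z))\,D(z)^{\sum_j n_j}$, the growth hypothesis \eqref{pbbdbelow3} translates precisely into $|\tilde p(z)|\ge \varepsilon'$ on $\overline{L_n}$, i.e., $\tilde p$ is strongly stable on the closed Lie ball.

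Next, I would apply \cite[Theorem 4.1]{GKVW16a} to $\tilde p$, viewing $L_n$ as an $n$-dimensional slice of the unit ball of $nk\times nk$ matrices via the map $z\mapsto \phi^{-1}(W(\Phi_n(z)))\otimes I_k$ for an appropriate enlargement $k$. This yields a polynomial $\tilde q$, an integer $k$, a strict contraction $\widehat K$, and an isometry $V:\mathbb{C}^m\to\mathbb{C}^{nk}$ encoding the slice, so that $\tilde p\tilde q$ has a contractive determinantal representation on $\overline{L_n}$. Substituting $w=\Phi_n(z)$ and running the Cayley algebra of Theorems~\ref{phasdetrep} and~\ref{pdetrepskew} --- multiplying by $\det(W(w)+iI)^k$ and expanding
\[
\det\!\bigl((W(w)+iI)\otimes I_k - \widehat K\,(W(w)-iI)\otimes I_k\bigr) = \det(I+\widehat K)\,\det\!\bigl(A_0 + W(w)\otimes I_k\bigr),
\]
with $A_0:=i(I-\widehat K)(I+\widehat K)^{-1}$ satisfying $\mathrm{Im}\,A_0\ge 0$ --- and then compressing by $V$ produces the stated form $\det\!\bigl(A_0 + V^*(W(w)\otimes I_k)V\bigr)$; the residue of $\det(W(w)+iI)^k$ not absorbed by the compression yields the prefactor $(w_1+i)^k$, which reflects that $W(w)+iI$ has $(w_1+i)$ as an eigenvalue of multiplicity $n-2$ coming from the trivial block of $W(w)$. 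Any remaining scalar factor is absorbed into $q(w)$.

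The main obstacle is the application of the Positivstellensatz on $L_n$. Unlike for $n=2$, where $L_2$ is directly the unit ball of the $2\times 2$ polynomial matrix $P_2(z)$ so that \cite[Theorem 4.1]{GKVW16a} applies cleanly, for $n\ge 3$ the Lie ball $L_n$ admits no such small-size polynomial matrix description. Working instead in an enlarged $nk\times nk$ matrix space and compressing via the isometry $V$ is what forces both the compression $V^*\cdot V$ and the extra factor $(w_1+i)^k$ in the final determinantal representation. Verifying that the Positivstellensatz applies in this enlarged setting, and tracing the algebra through the compression so that the output has exactly the form claimed in the theorem --- with $\mathrm{Im}\,A_0\ge 0$ and $m \le nk$ --- is the technical core of the proof.
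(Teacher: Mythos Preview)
Your overall strategy is correct and matches the paper: transform to the Lie ball via $\Phi_n$, obtain a contractive determinantal representation there, and pull back via the matrix Cayley identity $Q(z)=\phi^{-1}(W(\Phi_n(z)))$. However, there is a genuine gap in how you propose to obtain the representation on $\overline{L_n}$, and a misconception about where the isometry $V$ comes from.

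For $n\ge 3$ the Lie ball is \emph{not} the unit ball of any polynomial matrix $P(z)$ (your $Q_n(z)$ cannot be linear), so \cite[Theorem~4.1]{GKVW16a} does not apply directly, and ``viewing $L_n$ as a slice of the $nk\times nk$ matrix ball'' does not help: the map $z\mapsto \phi^{-1}(W(\Phi_n(z)))$ is rational, not polynomial, and tensoring with $I_k$ does not cure this. The paper instead establishes (Proposition~\ref{Q}, Corollary~\ref{Ppm}) that $z\in L_n$ iff $P_+(z)^*P_+(z)-P_-(z)^*P_-(z)>0$ with $P_+(z)=1\oplus(1-z_1)I_{n-1}$ and $P_-(z)=Q(z)P_+(z)$ both polynomial, and then proves a new two-sided version of the Positivstellensatz for $L_n$ (Theorem~\ref{nball}). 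This requires genuinely new ingredients: Lemma~\ref{Taylor} shows that any commuting tuple $T$ satisfying $P_+(T)^*P_+(T)-P_-(T)^*P_-(T)\ge 0$ has $\spec(T)\subseteq\overline{L_n}$, via von Neumann's inequality applied to $(I-T_1)(rI-T_1)^{-1}$ for $r>1$; Lemma~\ref{Lm32} constructs a universal tuple; and polynomial convexity of $\overline{L_n}$ (from convexity) supplies the finiteness needed to adapt \cite[Theorems~3.4, 4.1]{GKVW16a}. The output is $\tilde p\,\tilde q(z)=\det\bigl((P_+(z)\otimes I_k)-K(P_-(z)\otimes I_k)\bigr)$ with $K$ merely a contraction, \emph{not} strict.

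That last point is precisely what produces $V$ and the bound $m\le nk$. Since $I-K$ need not be invertible, one writes $K=U(I\oplus\tilde K)U^*$ with $I-\tilde K$ invertible; then $V$ consists of the last $m$ columns of the unitary $U$, and $A_0=i(I-\tilde K)^{-1}(I+\tilde K)$ is $m\times m$. Thus the compression is the removal of the eigenvalue-$1$ reducing subspace of $K$, not an encoding of any slice. The factor $(w_1+i)^k$ then arises purely from the scalar bookkeeping: combining $\det(W(w)+iI)^{-k}=(w_1+i)^{-(n-2)k}\bigl((w_1+i)^2-\sum_{j\ge 2}w_j^2\bigr)^{-k}$ with $(\det P_+(\Phi_n^{-1}(w)))^k$ and the clearing denominator $\bigl((w_1+i)^2-\sum_{j\ge 2}w_j^2\bigr)^{nk}$, rather than from a residual compression.
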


As in Section \ref{sec4} we will also first establish a determinantal representation with respect to the Lie ball $L_n$, but the description of $L_n$ will now require a rational matrix function description as opposed to the simple linear $P(z_1,z_2)$ description used in Section \ref{sec4}. We start this section with developing this rational matrix function description of $L_n$.

Define
$$ \Phi_n ((z_j)_{j=1}^n) := \frac{1}{(1-z_1)^2+z_2^2+\cdots +z_n^2} \begin{bmatrix} i(1-z_1^2-\cdots - z_n^2) \cr 2z_2 \cr \vdots \cr 2z_n \end{bmatrix}, $$
where $z_1,\ldots, z_n\in\CC$ are so that $(1-z_1)^2+z_2^2+\cdots +z_n^2\neq 0$.
\begin{prop}\label{Phinprop}
The map $\Phi_n: L_n \to T_{{\mathcal C}_n}$ is invertible, and its inverse is given by
$$ \Phi_n^{-1} ((w_j)_{j=1}^n )= \frac{1}{(w_1+i)^2-w_2^2-\cdots -w_n^2} \begin{bmatrix} 1+ w_1^2-w_2^2-\cdots - w_n^2 \cr -2w_2 \cr \vdots \cr -2w_n \end{bmatrix}. $$
\end{prop}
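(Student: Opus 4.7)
The plan is to verify the two assertions of the proposition separately: the formal inversion identity $\Phi_n^{-1}\circ\Phi_n = \mathrm{id}$ as rational self-maps, and the set-level bijection $\Phi_n(L_n) = T_{\mathcal{C}_n}$.

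For the formal inversion, I set $w = \Phi_n(z)$ and $N := (1-z_1)^2 + \sum_{j=2}^n z_j^2$. A one-line rearrangement shows $w_1 + i = 2i(1-z_1)/N$, from which the two key identities
\[
(w_1+i)^2 - \sum_{j=2}^n w_j^2 = -\frac{4}{N}, \qquad 1 + w_1^2 - \sum_{j=2}^n w_j^2 = -\frac{4z_1}{N}
\]
follow. Combined with $w_j = 2z_j/N$ for $j \geq 2$ and inserted into the stated formula for $\Phi_n^{-1}(w)$, each coordinate collapses back to $z_j$, establishing $\Phi_n^{-1}\circ\Phi_n = \mathrm{id}$ as a rational identity. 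The reverse composition follows by a symmetric calculation.

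For the set-level bijection, I would follow the matrix Cayley transform template of Lemma \ref{philemma}. The paper already observes that $w \in T_{\mathcal{C}_n}$ iff ${\rm Im}\,W(w) > 0$. Defining $P(z) := (W(w)-iI)(W(w)+iI)^{-1}$ with $w = \Phi_n(z)$, and inverting $W(w)+iI$ via Schur complement, yields the explicit rational matrix
\[
P(z) = \begin{bmatrix} z_1 & -(z')^T \\ z' & z_1 I_{n-1} + \dfrac{z'(z')^T}{1-z_1} \end{bmatrix}, \qquad z' = (z_2,\ldots,z_n)^T.
\]
The standard matrix Cayley identity $I - P^*P = 4(W+iI)^{-*}\,{\rm Im}(W)\,(W+iI)^{-1}$ gives $\|P(z)\| < 1 \iff \Phi_n(z) \in T_{\mathcal{C}_n}$, so the proposition reduces to verifying $\|P(z)\| < 1 \iff z \in L_n$. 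A block expansion of $I - P(z)^*P(z)$, valid since $|z_1|<1$ on $L_n$ forces $1-z_1 \neq 0$, together with a Schur-complement reduction, identifies positivity with the Lie ball inequality $\|z\|^2 + \sqrt{\|z\|^4 - |z^Tz|^2} < 1$, which is the positivity of $I - M(z)$ for the matrix $M(z)$ from the lemma preceding Theorem \ref{2ball}.

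The main obstacle is this final block computation. Conceptually everything reduces to a matrix Cayley transform, but the explicit expansion of $I - P(z)^*P(z)$ involves several rank-one corrections and a $1/(1-z_1)$ denominator that demand careful bookkeeping, even though no new ideas are required beyond those already used in Lemma \ref{philemma} for the $n=2$ case. The denominator causes only an apparent singularity, since $1 - z_1$ is nowhere zero on $L_n$.
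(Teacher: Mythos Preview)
Your route is correct in outline but genuinely different from the paper's. The paper does not compute anything: it equips $\CC^n$ with the spin-factor Jordan product $z_1 = \sum u_j v_j$, $z_j = u_1 v_j + u_j v_1$ ($j\ge 2$), recognizes $\Phi_n(z) = -(ie+iz)(iz-ie)^{-1}$ as the Jordan-algebra Cayley transform, and invokes \cite[Theorem X.4.3]{Faraut} to conclude at once that this Cayley transform is a bijection from the bounded symmetric domain (the Lie ball) to the tube domain. Your approach instead unwinds everything through the linear matrix model $W(w)$: you show the rational inverse formula by hand, and for the set-level bijection you compute $\phi^{-1}(W(\Phi_n(z)))$ and aim to verify directly that its contractivity is equivalent to membership in $L_n$. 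This is exactly the content of the paper's later Proposition~\ref{Q}, which in the paper is \emph{derived from} Proposition~\ref{Phinprop}; you are reversing the logical order and proving the $Q$-statement independently, which is legitimate and more self-contained, at the price of heavier computation.

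Two concrete points. First, your displayed formula for $P(z)$ is wrong: the lower right block should be $z_1 I_{n-1} + \dfrac{z'(z')^T - ((z')^T z')\,I_{n-1}}{1-z_1}$, not $z_1 I_{n-1} + \dfrac{z'(z')^T}{1-z_1}$; check $n=2$, where the correct answer is $\begin{bmatrix} z_1 & -z_2 \\ z_2 & z_1 \end{bmatrix}$ by Lemma~\ref{philemma}, while your formula gives a spurious $z_2^2/(1-z_1)$ in the $(2,2)$ slot. Second, the step you flag as the main obstacle---showing $\|P(z)\|<1 \iff z\in L_n$ directly---is not a one-line Schur complement for $n\ge 3$: unlike the $n=2$ case where $P(z)^*P(z) = M(z)$ on the nose, in general $P(z)^*P(z)$ and $M(z)$ are different matrices, and you must argue separately that they share the same sublevel set $\{<I\}$. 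This is doable (both conditions reduce to the two scalar inequalities defining $L_n$), but it is a real computation, not a formality; the paper sidesteps it entirely via the Jordan-algebra framework.
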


\begin{proof}
On ${\mathbb C}^n$ we define the multiplication via $z=uv$ where
$$ z_1=\sum_{j=1}^n u_jv_j ,\ \  z_j=u_1v_j+u_jv_1, j=2,\ldots , n.$$ The neutral element is $e=(1,0, \ldots, 0)$, and $u$ is invertible when $u_1^2-u_2^2-\cdots - u_n^2 \neq 0$, and in that case
$$ u^{-1}= \frac{1}{u_1^2-u_2^2-\cdots - u_n^2} (u_1,-u_2,\ldots , -u_n).$$
As an aside, we  mention that the above multiplication is not associative. E.g, for $f_2=(0,1,0,\ldots,0)$ and $f_3=(0,0,1,0,\ldots,0)$, then $(f_3 f_2)f_2=0\neq f_3 = f_3(f_2 f_2)$.
Using the above notation, we have that 
\begin{equation}\label{Cayley}
\Phi_n(z)=-(ie+iz)(iz-ie)^{-1}. \end{equation} 
The result now follows similarly as in the proof of \cite[Theorem X.4.3]{Faraut}.
\end{proof}

Equation \eqref{Cayley} shows that $\Phi_n$ is a Cayley type transform.
Using the above we arrive at the following description of the Lie ball.

\begin{prop}\label{Q} We have $z\in L_n$ if and only if 
$$ Q(z):=\begin{bmatrix} z_1 & -z_2 & -z_3 & \cdots & -z_n \cr z_2 & z_1-\frac{\sum_{j\neq 2,j=2}^n z_j^2}{1-z_1} & \frac{z_2z_3}{1-z_1} & \cdots & \frac{z_2z_n}{1-z_1} \cr z_3 & \frac{z_2z_3}{1-z_1} & z_1-\frac{\sum_{j\neq 3,j=2}^n z_j^2}{1-z_1} & \cdots & \frac{z_3z_n}{1-z_1}\cr \vdots & \vdots & \vdots & \ddots & \vdots \cr z_n &
\frac{z_2z_n}{1-z_1} & \frac{z_3z_n}{1-z_1}& \cdots & z_1-\frac{\sum_{j\neq n,j=2}^n z_j^2}{1-z_1}\end{bmatrix} $$ is a strict contraction.   
\end{prop}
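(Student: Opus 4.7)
The plan is to reduce the claim to facts already established, via the Cayley-transform bijection $\Phi_n:L_n\to T_{{\mathcal C}_n}$ of Proposition \ref{Phinprop} and the description of $T_{{\mathcal C}_n}$ recalled at the start of this section as $\{w:{\rm Im}\,W(w)>0\}$. The bridge between the two sides is the operator identity
\begin{equation}\label{plan:main}
\phi(Q(z))\;=\;W(\Phi_n(z)),
\end{equation}
where $\phi(A)=i(I+A)(I-A)^{-1}$ is the operator Cayley transform. A direct computation yields the standard identity
$$
\tfrac{1}{2i}\bigl(\phi(A)-\phi(A)^*\bigr)\;=\;(I-A^*)^{-1}(I-A^*A)(I-A)^{-1},
$$
which shows that $\phi$ is a biholomorphism between the open unit ball of strict contractions and the set of matrices with positive-definite imaginary part. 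Once \eqref{plan:main} is in hand, the proposition follows from the equivalence chain
$$
z\in L_n\;\Longleftrightarrow\;\Phi_n(z)\in T_{{\mathcal C}_n}\;\Longleftrightarrow\;{\rm Im}\,W(\Phi_n(z))>0\;\Longleftrightarrow\;{\rm Im}\,\phi(Q(z))>0\;\Longleftrightarrow\;\|Q(z)\|<1.
$$

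To verify \eqref{plan:main}, I would write $Q(z)$ in block form
$$
Q(z)=\begin{bmatrix} z_1 & -\tilde z^{\,T}\\ \tilde z & D(z)\end{bmatrix},\qquad \tilde z=(z_2,\ldots,z_n)^{T},
$$
and observe that $D(z)=z_1 I-\frac{1}{1-z_1}(sI-\tilde z\tilde z^{\,T})$ with $s=z_2^2+\cdots+z_n^2$. A Sherman--Morrison inversion then gives $(I-D(z))^{-1}=\frac{1-z_1}{\Delta}I+\frac{1}{\Delta(1-z_1)}\tilde z\tilde z^{\,T}$, where $\Delta(z)=(1-z_1)^2+s$. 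Feeding this into the block Schur-complement inversion formula for $(I-Q(z))^{-1}$, the various $1/(1-z_1)$ pieces collapse, leaving the compact form
$$
(I-Q(z))^{-1}\;=\;\frac{1}{\Delta(z)}\begin{bmatrix} 1-z_1 & -\tilde z^{\,T}\\ \tilde z & (1-z_1)I\end{bmatrix}.
$$
Multiplying by $i(I+Q(z))$ on the left and using the auxiliary identity $(I+D(z))\tilde z=(1+z_1)\tilde z$, the four resulting blocks simplify to $\frac{i(1-z_1^2-s)}{\Delta}$ at both diagonal positions, $\frac{-2i\tilde z^{\,T}}{\Delta}$ on the top border, and $\frac{2i\tilde z}{\Delta}$ on the left border; this matches $W(\Phi_n(z))$ term by term by the definition of $\Phi_n$.

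The main obstacle is keeping the bookkeeping in the Schur-complement step under control, since several rational factors of $1/(1-z_1)$ enter and must cancel to leave just the single factor $1/\Delta$. Conceptually this cleanness is no accident: $Q(z)$ is essentially the matrix of left multiplication by $z$ in the Lorentz-type Jordan algebra from the proof of Proposition \ref{Phinprop}, and \eqref{plan:main} is then the operator-valued analogue of the scalar Cayley relation \eqref{Cayley}. A minor technical caveat is that the entries of $Q(z)$ involve $1/(1-z_1)$ and so are only defined for $z_1\ne 1$, but this is harmless because $|z_1|\le\|z\|<1$ on $L_n$, and in any case \eqref{plan:main} holds as a rational identity on an open dense subset of $\CC^n$.
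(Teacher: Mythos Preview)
Your proposal is correct and follows essentially the same route as the paper: both proofs hinge on the identity $Q(z)=\phi^{-1}(W(\Phi_n(z)))$ (equivalently, your \eqref{plan:main}) together with Proposition~\ref{Phinprop} and the standard equivalence $\|A\|<1 \Leftrightarrow {\rm Im}\,\phi(A)>0$. The only cosmetic difference is the direction of the verification: the paper first writes out the entries of $\phi^{-1}(W(w))$ in the $w$-variables and then substitutes $w=\Phi_n(z)$ to recognize $Q(z)$, whereas you compute $\phi(Q(z))$ directly via a block Schur-complement/Sherman--Morrison argument and match it with $W(\Phi_n(z))$; the algebra checks out and the two computations are interchangeable.
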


\begin{proof}
Recall that $w=(w_1,\ldots , w_n) \in T_{{\mathcal C}_n}$ if and only if
$$ W(w):= \begin{bmatrix} w_1 & -iw_2 & -iw_3 & \cdots & -iw_n \cr iw_2 & w_1 & 0 & \cdots & 0 \cr iw_3 & 0 & w_1 & \cdots & 0 \cr \vdots & \vdots & \vdots & \ddots & \vdots \cr iw_n & 0 & 0 & \cdots & w_1 \end{bmatrix} $$ has positive definite imaginary part. Thus $w\in T_{{\mathcal C}_n}$ if and only if $ \phi^{-1}(W(w)) $ is a strict contraction.
Next, we have that $\phi^{-1}(W(w))=Y=(y_{kl})_{k.l=1}^n$, where
$$ y_{11}= \frac{1+w_1^2-w_2^2-\cdots -w_n^2}{(w_1+i)^2-w_2^2-\cdots -w_n^2}, $$ $$ y_{1j}=\frac{2w_j}{(w_1+i)^2-w_2^2-\cdots -w_n^2}, j=2,\ldots , n, $$
$$y_{j1}=\frac{-2w_j}{(w_1+i)^2-w_2^2-\cdots -w_n^2}, j=2,\ldots , n,$$ 
$$ y_{kl}= \frac{-2iw_kw_l}{(w_1+i)((w_1+i)^2-w_2^2-\cdots -w_n^2)}, k\neq l, k,l=2,\ldots, n,$$
and for $k=2,\ldots,n$,
$$ y_{kk} = \frac{w_1^3+iw_1^2+w_1+i-w_1(w_2^2+\ldots +w_n^2) +i \sum_{j\neq k,j=2}^n w_j^2 -i w_k^2}{(w_1+i)((w_1+i)^2-w_2^2-\cdots -w_n^2)}. $$
When we substitute $w=\Phi_n(z)$ in $\phi^{-1}(W(w))$ we find
$$ \phi^{-1}(W(\Phi_n(z))) = \ \ \ \ \ \ \ \ \ \ \ \ \ \ \ \ \ $$ $$\begin{bmatrix} z_1 & -z_2 & -z_3 & \cdots & -z_n \cr z_2 & z_1-\frac{\sum_{j\neq 2,j=2}^n z_j^2}{1-z_1} & \frac{z_2z_3}{1-z_1} & \cdots & \frac{z_2z_n}{1-z_1} \cr z_3 & \frac{z_2z_3}{1-z_1} & z_1-\frac{\sum_{j\neq 3,j=2}^n z_j^2}{1-z_1} & \cdots & \frac{z_3z_n}{1-z_1}\cr \vdots & \vdots & \vdots & \ddots & \vdots \cr z_n &
\frac{z_2z_n}{1-z_1} & \frac{z_3z_n}{1-z_1}& \cdots & z_1-\frac{\sum_{j\neq n,j=2}^n z_j^2}{1-z_1}\end{bmatrix}.$$
Combining these observations with Proposition \ref{Phinprop} yields the result.
\end{proof}

Let 
$$ S_r(z) = 1 \oplus (r-z_1)I_{n-1} $$
and
$$ T_r(z)= \begin{bmatrix} z_1 & -z_2(r-z_1) &  \cdots & -z_n(r-z_1) \cr z_2 & z_1(r-z_1)-{\sum_{j\neq 2,j=2}^n z_j^2}  & \cdots & {z_2z_n} \cr  \vdots & \vdots & \ddots & \vdots \cr z_n &
{z_2z_n} &  \cdots & z_1(r-z_1)-{\sum_{j\neq n,j=2}^n z_j^2}\end{bmatrix}.$$

\begin{cor}\label{Ppm} Let the $n\times n$  matrix polynomials $P_\pm(z)$ be given by 
$$ P_+(z) = 1\oplus (1-z_1)I_{n-1}, \ P_-(z)=Q(z) P_+(z).$$
Then $z\in L_n$ if and only if 
\begin{equation*}
 P_+(z)^*P_+(z)-P_-(z)^*P_-(z) >0.\end{equation*} 
\end{cor}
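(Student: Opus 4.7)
\smallskip

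\noindent\textbf{Proof plan for Corollary \ref{Ppm}.}
The strategy is to reduce the claimed positivity to the contractivity of $Q(z)$, which is exactly the Lie-ball condition supplied by Proposition \ref{Q}. The matrices $P_+(z)$ and $P_-(z)$ are designed to clear the denominators appearing in $Q(z)$: inspection of the formulas for $S_1$ and $T_1$ shows that, as polynomial matrices, one has the factorization
\begin{equation*}
P_-(z) \;=\; Q(z)\, P_+(z)
\end{equation*}
at every point where $Q(z)$ is defined, i.e. whenever $1-z_1\neq 0$. This immediately yields the key congruence
\begin{equation*}
P_+(z)^*P_+(z)-P_-(z)^*P_-(z) \;=\; P_+(z)^*\bigl(I-Q(z)^*Q(z)\bigr)P_+(z).
\end{equation*}

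The first step is the case $z_1\neq 1$. Then $P_+(z) = 1\oplus (1-z_1)I_{n-1}$ is invertible, so the right-hand side above is $*$-congruent to $I-Q(z)^*Q(z)$. Congruent matrices have the same signature, so
\begin{equation*}
P_+(z)^*P_+(z)-P_-(z)^*P_-(z)>0 \;\Longleftrightarrow\; I-Q(z)^*Q(z)>0.
\end{equation*}
By Proposition \ref{Q}, the right-hand condition is exactly $z\in L_n$, which establishes the equivalence on the open set $\{z_1\neq 1\}$.

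The remaining (small) obstacle is to handle $z_1=1$, where $Q(z)$ is undefined and the congruence argument breaks down. Here I would argue that both sides of the equivalence simply fail. For the Lie-ball side, note that $z\in L_n$ forces $\|z\|<1$ (this is contained in the alternative description of $L_n$ recalled above Proposition \ref{Q}), so in particular $|z_1|<1$; hence $z_1=1$ rules out membership in $L_n$. For the determinantal side, $z_1=1$ makes $P_+(z)=1\oplus 0_{n-1}$, and a direct inspection of $T_1(z)$ shows that the last $n-1$ columns of $P_-(z)=T_1(z)$ vanish as well (each of them carries a factor of $1-z_1$). Consequently both $P_+(z)^*P_+(z)$ and $P_-(z)^*P_-(z)$ are supported in the $(1,1)$ entry, so $P_+(z)^*P_+(z)-P_-(z)^*P_-(z)$ has rank at most one and is not positive definite. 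Combining this with the previous paragraph completes the proof.

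I do not anticipate a serious obstacle: the calculation that $P_-(z)=Q(z)P_+(z)$ as polynomial matrices is routine from the formulas for $Q$, $S_1$ and $T_1$, and the only subtlety is the trivial degenerate-case analysis at $z_1=1$.
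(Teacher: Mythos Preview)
Your approach is exactly the one the paper has in mind: the corollary is stated as an immediate consequence of Proposition~\ref{Q}, and your congruence identity
\[
P_+(z)^*P_+(z)-P_-(z)^*P_-(z)=P_+(z)^*\bigl(I-Q(z)^*Q(z)\bigr)P_+(z)
\]
together with the invertibility of $P_+(z)$ for $z_1\neq 1$ is precisely how one unpacks that sentence.

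There is one small slip in your degenerate-case analysis. When $z_1=1$, the last $n-1$ columns of $P_-(z)=T_1(z)$ do \emph{not} all vanish: only the first-row entries $-z_j(1-z_1)$ do, while the entries $z_kz_j$ and $z_1(1-z_1)-\sum_{l\neq j}z_l^2$ in rows $2,\ldots,n$ survive as $z_kz_j$ and $-\sum_{l\neq j,\,l\ge 2}z_l^2$. So your rank-one claim for $P_+(z)^*P_+(z)-P_-(z)^*P_-(z)$ is not justified. The conclusion you want, however, is even easier: since $P_+(z)^*P_+(z)=1\oplus 0_{n-1}$ when $z_1=1$, the $(2,2)$ diagonal entry of the difference equals $-\|(\text{column }2\text{ of }P_-(z))\|^2\le 0$, which already rules out positive definiteness. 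With this correction your argument is complete and matches the paper.
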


\begin{proof}
The statement is a direct consequence of Proposition \ref{Q}. 
\end{proof}

\begin{lemma}\label{Taylor}
    Let $T=(T_1,\ldots, T_n) $ be a commuting tuple of Hilbert space operators so that \begin{equation}\label{P(T)}
 P_+(T)^*P_+(T)-P_-(T)^*P_-(T) \ge 0.\end{equation} 
 Then $\spec(T) \subseteq \overline{L_n}$.
\end{lemma}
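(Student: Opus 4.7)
The plan is to invoke Corollary \ref{Ppm}, which characterises the Lie ball $L_n$ (and, by continuity, its closure at points where $z_1\neq 1$) through the contractivity of $Q(z)=P_-(z)P_+(z)^{-1}$, and to transport this to the spectral level via Taylor's holomorphic functional calculus together with the scalar joint spectral mapping theorem.

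Under the temporary assumption $1 \notin \spec(T_1)$ the operator $P_+(T)=I\oplus(I-T_1)\otimes I_{n-1}$ is invertible on $\mathcal{H}^n$, so I would set $Q(T):=P_-(T)P_+(T)^{-1}$ and rewrite the hypothesis as
$$I - Q(T)^{*}Q(T) \;=\; P_+(T)^{-*}\bigl(P_+(T)^{*}P_+(T) - P_-(T)^{*}P_-(T)\bigr)P_+(T)^{-1} \;\ge\;0,$$
so $\|Q(T)\|\le 1$ on $\mathcal{H}^n$. Now argue by contradiction: if $\lambda \in \spec(T)\setminus \overline{L_n}$ then $\lambda_1\neq 1$ and, by Corollary \ref{Ppm}, $Q(\lambda)$ is not a contraction, so there are unit vectors $v_0, w_0\in \CC^n$ with $|w_0^{*}Q(\lambda)v_0|>1$. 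The scalar rational function $g(z):=w_0^{*}Q(z)v_0$ is holomorphic on $\{z_1\neq 1\}$, hence on a neighbourhood of $\spec(T)$, and via Taylor's functional calculus $g(T)$ is realised as $V^{*}Q(T)U$, where $U,V:\mathcal{H}\to\mathcal{H}^n$ send $h$ to $v_0\otimes h$ and $w_0\otimes h$ respectively. Consequently $\|g(T)\|\le \|V\|\,\|Q(T)\|\,\|U\|\le 1$. By the scalar joint spectral mapping theorem $g(\lambda)\in \spec(g(T))$, so $|g(\lambda)|\le \|g(T)\|\le 1$, contradicting $|g(\lambda)|>1$.

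To handle the remaining case $1\in\spec(T_1)$, I would first observe that the hypothesis, tested against vectors of the form $(h,0,\dots,0)\in\mathcal{H}^n$, yields $\sum_{j=1}^n T_j^{*}T_j\le I$; in particular every $T_j$ is a contraction. If $\eta_k$ are approximate eigenvectors of $T_1$ at $1$, this column-contraction bound forces $T_j\eta_k\to 0$ for $j\ge 2$, while testing the hypothesis against $(0,\dots,\eta_k,\dots,0)$ produces further polynomial constraints that eventually pin any $\lambda\in\spec(T)$ with $\lambda_1=1$ to the point $(1,0,\dots,0)\in\overline{L_n}$. Alternatively, a Shilov--Arens--Calder\'on-style analytic idempotent decomposition splits $\mathcal{H}$ according to whether the spectrum lies on or off $\{z_1=1\}$, reducing the off-hyperplane summand to the main argument and the on-hyperplane summand to the explicit computation just sketched.

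The principal technical obstacle is exactly this boundary case: where $\spec(T)$ meets the polar locus of $Q$, both the contractivity of $Q(T)$ and the scalar functional calculus for $g$ degenerate simultaneously, so either a spectral decomposition or a careful perturbation of $T_1$ is needed to complete the argument. Once that is dispatched, the bulk of the proof is a clean reduction of the matrix-operator positivity $P_+(T)^{*}P_+(T)\ge P_-(T)^{*}P_-(T)$ to a scalar spectral-radius inequality via joint spectral mapping.
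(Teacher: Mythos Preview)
Your main argument---reduce the matrix positivity to $\|Q(T)\|\le 1$ and then pull this down to the joint spectrum via the scalar spectral mapping theorem applied to $z\mapsto w_0^*Q(z)v_0$---is exactly the route the paper takes. The difference is how the singularity of $Q$ on $\{z_1=1\}$ is handled, and here your proposal has a real gap.

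Neither of your suggested fixes for the case $1\in\spec(T_1)$ works. The approximate-eigenvector computation at $\lambda_1=1$ controls at best the approximate point spectrum of $T_1$, not the Taylor joint spectrum of $T$; there is no general mechanism producing joint approximate eigenvectors for a point of $\spec(T)$. The Shilov--Arens--Calder\'on idempotent decomposition requires the spectrum to split into relatively open-and-closed pieces; the hyperplane $\{z_1=1\}$ meets $\overline{L_n}$ in a single boundary point and need not disconnect $\spec(T)$, so no idempotent is available in general.

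The paper sidesteps the singularity by a parameter shift rather than by splitting $\spec(T)$. From the $(1,1)$ block of the hypothesis one gets $\sum_j T_j^*T_j\le I$, so $T_1$ is a contraction. For $r>1$ set $P_{+,r}(z)=1\oplus (r-z_1)I_{n-1}$; von Neumann's inequality applied to $f_r(z)=(1-z)/(r-z)$ gives $\|(I-T_1)(rI-T_1)^{-1}\|\le 2/(r+1)<1$, which yields
\[
P_{+,r}(T)^*P_{+,r}(T)\;\ge\;P_+(T)^*P_+(T)\;\ge\;P_-(T)^*P_-(T).
\]
Now $P_{+,r}(T)$ is genuinely invertible (no boundary case), so $\|P_{+,r}(T)^{-1}P_-(T)\|\le 1$, and the scalar spectral mapping argument runs on the rational functions $w_0^*P_{+,r}(z)^{-1}P_-(z)v_0$, which are holomorphic on all of $\overline{\DD}^n\supseteq\spec(T)$. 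One obtains $\|P_{+,r}(\lambda)^{-1}P_-(\lambda)\|\le 1$ for every $\lambda\in\spec(T)$ and every $r>1$, and finishes with
\[
\spec(T)\subseteq\bigcap_{r>1}\bigl\{z\in\overline{\DD}^n:\|P_{+,r}(z)^{-1}P_-(z)\|\le 1\bigr\}=\overline{L_n}.
\]
This $r$-shift is the missing idea; once you have it, your argument and the paper's coincide.
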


\begin{proof}
The $(1,1)$ entry of \eqref{P(T)} equals
$I-\sum_{j=1}^n T_j^*T_j \ge 0$, so in particular each $T_j$ is a contraction. For $r>1$, let $f_r(z)=\frac{1-z}{r-z}$. By von Neumann's inequality, we have that 
$$\| f_r(T_1)\|= \| (I-T_1)(rI-T_1)^{-1} \| \le \sup_{|z|\le 1} |f_r(z)| = \frac{2}{r+1}.$$ Now, letting $P_{+,r}(z) = 1\oplus (r-z_1)I_{n-1}$, we have that 
\begin{equation*}
 P_{+,r}(T)^*P_{+,r}(T)-P_+(T)^*P_+(T) \ge
 P_{+,r}(T)^*(0 \oplus (1-\frac{2^2}{(r+1)^2})I)P_{+,r}(T) \ge 0.\end{equation*} Thus
 \begin{equation*}
 P_{+,r}(T)^*P_{+,r}(T)-P_-(T)^*P_-(T)  \ge 0.\end{equation*}
 Since each $T_j$ is a contraction, $P_{+,r}(T)$ is invertible and we can rewrite this as
  \begin{equation*}
\|P_{+,r}(T)^{-1} P_-(T)\| \leq 1,\end{equation*}
and each entry of the rational matrix function $P_{+,r}(z)^{-1} P_-(z)$ is holomorphic on a neighbourhood of $\overline{\DD}^n \supseteq \spec(T)$. Taking a unit vector $v \in \CC^n$, we can now apply the spectral mapping theorem to the rational function 
$v^* P_{+,r}(z)^{-1} P_-(z) v$ to deduce that
\begin{equation*}
|v^* P_{+,r}(\lambda)^{-1} P_-(\lambda) v |\leq 1
\end{equation*}
for all $\lambda\in\spec(T)$,
since $v^* P_{+,r}(T)^{-1} P_-(T) v$ is a contraction. It follows that
\begin{equation*}
\spec(T) \subseteq \bigcap_{r>1} \left\{z \in \overline{\DD}^n \colon \|P_{+,r}(z)^{-1} P_-(z)\| \leq 1\right\} = \overline{L_n}.
\end{equation*}
\end{proof}

We prove a version of \cite[Lemma 3.2]{GKVW16a} in the current setting.
\begin{lemma}\label{Lm32}
    There exists a $n$-tuple $T_{\max}$ of commuting bounded linear operators on a Hilbert space satisfying 
    \begin{equation}\label{P+-} P_+(T_{\max})^*P_+(T_{\max})-P_-(T_{\max})^*P_-(T_{\max}) \geq 0 \end{equation} and
    $$ \| q (T_{\max} ) \| = \sup_{T \colon P_+(T)^*P_+(T)-P_-(T)^*P_-(T) \geq 0} \|q(T) \| =: \| q \|_{P_\pm}$$
    for every polynomial $q$.
\end{lemma}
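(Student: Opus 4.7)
The plan is to construct $T_{\max}$ as a direct sum over a countable collection of tuples that nearly attain the supremum defining $\|q\|_{P_\pm}$ on a countable dense set of polynomials, then extend to all polynomials by continuity. This follows the template of \cite[Lemma 3.2]{GKVW16a}, adapted to the rational matrix polynomials $P_\pm$ of Corollary \ref{Ppm}.

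First I would check that $\|q\|_{P_\pm}$ is finite for every polynomial $q$. The $(1,1)$ entry of \eqref{P+-} reads $I-\sum_{j=1}^n T_j^*T_j\ge 0$, so each $T_j$ is a contraction, and therefore $\|q(T)\|\le \sum_\alpha |q_\alpha|$ uniformly over all commuting tuples $T$ satisfying \eqref{P+-}. Next I would reduce the supremum defining $\|q\|_{P_\pm}$ to a supremum over a set: given a tuple $T$ satisfying \eqref{P+-} on some Hilbert space $\mathcal H$ and $\epsilon>0$, pick unit vectors $\xi,\eta\in \mathcal H$ with $|\langle q(T)\xi,\eta\rangle|\ge \|q(T)\|-\epsilon$, and let $\mathcal M\subseteq\mathcal H$ be the smallest closed subspace containing $\xi,\eta$ that is invariant under every $T_j$ and every $T_j^*$; then $\mathcal M$ is separable, $P_\pm(T|_\mathcal{M})=P_\pm(T)|_\mathcal M$, and restriction of any positive operator to a subspace preserves positivity, so $T|_\mathcal M$ satisfies \eqref{P+-} and witnesses $\|q(T)\|$ up to $\epsilon$. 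Hence one may assume the underlying Hilbert space is separable, and $\|q\|_{P_\pm}$ is a well-defined supremum over a set.

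Now fix a countable dense subset $\mathcal Q\subseteq \CC[z_1,\ldots,z_n]$, e.g.\ the polynomials with coefficients in $\QQ+i\QQ$. For each pair $(q,m)\in\mathcal Q\times\mathbb{N}$ choose a commuting tuple $T^{(q,m)}$ on a separable Hilbert space $\mathcal H_{q,m}$ satisfying \eqref{P+-} with
\[
\|q(T^{(q,m)})\|\ \ge\ \|q\|_{P_\pm}-\frac1m .
\]
Set $T_{\max}=\bigoplus_{(q,m)}T^{(q,m)}$ on $\bigoplus_{(q,m)}\mathcal H_{q,m}$; each component $T_{\max,j}$ is a direct sum of contractions, hence a bounded operator, and commutativity is inherited from the summands. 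Since every polynomial matrix expression applied to a direct sum is the corresponding direct sum of expressions, $P_+(T_{\max})^*P_+(T_{\max})-P_-(T_{\max})^*P_-(T_{\max})$ is itself a direct sum of positive semidefinite operators, so \eqref{P+-} holds for $T_{\max}$. Consequently $\|r(T_{\max})\|\le \|r\|_{P_\pm}$ for every polynomial $r$, while for each $q\in\mathcal Q$,
\[
\|q(T_{\max})\|\ \ge\ \sup_m \|q(T^{(q,m)})\|\ \ge\ \|q\|_{P_\pm},
\]
yielding equality on $\mathcal Q$.

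To extend to all polynomials, note that over commuting contractive tuples the maps $q\mapsto \|q(T_{\max})\|$ and $q\mapsto \|q\|_{P_\pm}$ are both Lipschitz in the $\ell^1$-norm on the coefficients of a polynomial (of bounded degree), since $\|q(T)-q'(T)\|\le \sum_\alpha|q_\alpha-q_\alpha'|$ whenever each $T_j$ is a contraction. Approximating an arbitrary $q$ by a sequence in $\mathcal Q$ therefore transfers the identity $\|q(T_{\max})\|=\|q\|_{P_\pm}$ from $\mathcal Q$ to all of $\CC[z_1,\ldots,z_n]$. The main obstacle I would flag is the set-theoretic one in the second paragraph: without the cyclic-subspace reduction the ``supremum'' is formally taken over a proper class of tuples, and one must verify that restriction to an $\{T_j,T_j^*\}$-invariant subspace really preserves both \eqref{P+-} and the value of $\|q(T)\|$ up to $\epsilon$. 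Once this reduction is in hand, the rest of the argument is a routine direct-sum construction.
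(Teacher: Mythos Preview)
Your proof is correct and rests on the same direct-sum idea as the paper, which in turn follows \cite[p.~65]{Paulsen}. The paper's version is more direct: for each fixed polynomial $p$ it first \emph{attains} $\|p\|_{P_\pm}$ exactly by taking a direct sum along a maximizing sequence of tuples, and then takes a further direct sum indexed by all polynomials---no density or continuity step is needed. Your version instead sums over the countable index set $\mathcal{Q}\times\mathbb{N}$ and extends to arbitrary polynomials by the $\ell^1$-Lipschitz estimate; this costs an extra approximation argument but yields a separable $T_{\max}$ and, via the reducing-subspace reduction you spell out, a clean resolution of the set-theoretic issue that the paper (following Paulsen) leaves implicit. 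Both arguments use the same two ingredients: that the constraint \eqref{P+-} is preserved under direct sums, and that its $(1,1)$ entry forces each $T_j$ to be a contraction so that $\|q\|_{P_\pm}<\infty$.
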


\begin{proof} We follow the arguments in \cite[Page 65]{Paulsen}.
First note that for a ﬁxed polynomial $p$, there is always an $n$-tuple of commuting operators satisfying \eqref{P+-} where
this supremum is achieved. To see this, ﬁrst choose a sequence of $n$-tuples
$(T_1^{(k)}, \ldots , T_n^{(k)} )$ such that $\| p \|_{P_\pm} = \sup_k \| p(T_1^{(k)}, \ldots, T_n^{(k)} )\|$. If we then let $T_i =
\oplus_k
T_i^{(k)}$, $i = 1, \ldots , n$, be the $n$-tuple  on the direct
sum of the corresponding Hilbert spaces, then $\| p(T_1, \ldots, T_n )\| = \| p \|_{P_\pm}$.

Now, if for each polynomial $p$ we choose a commuting $n$-tuple where $\| p \|_{P_\pm}$ is attained and form the direct sum of all such $n$-tuples, then we
have a single $n$-tuple $(T_1, \ldots, T_n )$ on a Hilbert space
${\mathcal H}$ such that $\| p \|_{P_\pm} = \| p(T_1, \ldots , T_n ) \|$ for every polynomial.
\end{proof}
\noindent

\begin{theorem}\label{nball} 
Let $p(z)$ have no roots on the closed $n$-variable Lie ball $\overline{L_n}$. Then there exists a polynomial $q(z)$, a positive integer $k$ and a $nk \times nk$ contractive matrix $K$ so that 
$$ p(z) q(z) = \det((P_+(z)\otimes I_k) - K (P_-(z) \otimes I_{k})), $$
where $P_\pm(z)$ are defined in Corollary \ref{Ppm}.
\end{theorem}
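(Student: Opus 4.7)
The proof will parallel that of Theorem \ref{2ball}, but in place of the linear matrix polynomial $P(z_1,z_2)$ one uses the matrix polynomials $P_\pm(z)$ of Corollary \ref{Ppm}, appealing to a matrix-valued version of the Hermitian Positivstellensatz from \cite{GKVW16a}. The groundwork is exactly Lemmas \ref{Taylor} and \ref{Lm32}: the former establishes the spectral inclusion $\spec(T) \subseteq \overline{L_n}$ for commuting tuples $T$ with $P_+(T)^*P_+(T)-P_-(T)^*P_-(T) \ge 0$; the latter produces a universal such tuple $T_{\max}$ attaining the $P_\pm$-norm of every polynomial.

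First I would observe that since $p$ has no zeros on the compact set $\overline{L_n}$, Lemma \ref{Taylor} together with the Riesz--Dunford holomorphic functional calculus gives that $p(T)$ is invertible for every admissible commuting tuple $T$. Combined with Lemma \ref{Lm32}, this yields
$$\sup_T \|p(T)^{-1}\| = \|p(T_{\max})^{-1}\| < \infty,$$
so $1/p$ has finite $P_\pm$-norm. The key step is then the GKVW-style realization theorem: this boundedness, through a Hahn--Banach / duality argument followed by a Kolmogorov decomposition of the resulting positive kernel, produces a transfer-function realization
$$\frac{1}{p(z)} = D + C^*\bigl(P_+(z) \otimes I_{\mathcal{H}} - \widetilde K (P_-(z) \otimes I_{\mathcal{H}})\bigr)^{-1} B,\qquad z \in L_n,$$
with $\widetilde K$ a contraction on $\CC^n \otimes \mathcal{H}$. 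A standard finite-dimensional (McMillan/Kalman) truncation, enabled by $p$ being a polynomial, cuts $\mathcal{H}$ down to $\CC^k$ and produces a contraction $K \in \CC^{nk\times nk}$. A block Schur-complement identity then shows that $\det(P_+(z)\otimes I_k - K(P_-(z)\otimes I_k))$ is a polynomial divisible by $p(z)$, and dividing out gives the polynomial $q(z)$ and the stated determinantal representation.

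The main obstacle is the realization step, which extends the GKVW argument from the polydisc setting ($P_+ \equiv I$, $P_-$ diagonal linear) to genuine matrix polynomial $P_\pm$ arising from the Lie ball. The key subtlety is that $P_+(z) = 1 \oplus (1-z_1)I_{n-1}$ is singular at the boundary point $(1,0,\ldots,0) \in \partial L_n$, so one cannot simply invert $P_+$ and reduce to the case $P_+ \equiv I$ on a disc or polydisc domain; the positivity $P_+^*P_+ - P_-^*P_- \ge 0$ must be used directly. This is precisely why Lemmas \ref{Taylor} and \ref{Lm32} are needed: they supply the spectral control and the universal tuple that allow the duality / kernel decomposition to be carried out without inverting $P_+$. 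Once the realization is in hand, the passage to the determinantal representation is a formal Schur-complement computation.
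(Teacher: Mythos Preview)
Your outline matches the paper's proof: both invoke the GKVW machinery (Theorems 2.3, 3.4, 4.1 of \cite{GKVW16a}) with the single Hermitian matrix polynomial $P_+^*P_+ - P_-^*P_-$, using Lemmas \ref{Taylor} and \ref{Lm32} as the operator-theoretic input, and then pass to the determinantal form via the lurking-contraction / Schur-complement calculation. The one ingredient you skipped is the polynomial convexity of $\overline{L_n}$: Lemma \ref{Lm32} only asserts $\|q(T)\| \le \|q(T_{\max})\|$ for \emph{polynomials} $q$, so the step ``combined with Lemma \ref{Lm32}, this yields $\sup_T \|p(T)^{-1}\| = \|p(T_{\max})^{-1}\|$'' (equivalently, the finiteness \eqref{finiteAgler} for holomorphic $F$) needs Oka--Weil approximation of $1/p$ by polynomials on a neighborhood of $\overline{L_n}$, and that is exactly where polynomial convexity (here immediate from convexity of $\overline{L_n}$) enters. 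The paper flags this explicitly; with it supplied, your argument is correct and essentially identical to the paper's.
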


\noindent 
\begin{proof} We will apply a variation of \cite[Theorem 4.1]{GKVW16a} where the inequality $I-P(z)^*P(z)>0 $ is replaced by $P_+(z)^*P_+(z)-P_-(z)^*P_-(z) >0$.

We first need to observe that for any analytic matrix valued function $F$ on a neighbourhood of $\overline{L_n}$, we have that 
\begin{equation}\label{finiteAgler} \sup_{T \colon P_+(T)^*P_+(T)-P_-(T)^*P_-(T) \geq 0} \|F(T) \| < \infty. \end{equation} 
The argument is the same as in the proof of \cite[Lemma 3.3]{GKVW16a}. For this we need that $\overline{L_n}$ is polynomially convex, which follows from the convexity of $\overline{L_n}$ (see, e.g., \cite[Page ix]{Loos}). The other parts in the argument are covered by Lemmas \ref{Taylor} and \ref{Lm32}.

Now we can proceed as in the proof of \cite[Theorem 3.4]{GKVW16a}, where \cite[Theorem 2.3]{GKVW16a} is applied with $P_j=P_+^*P_+-P_-^*P_- $. At the end of the proof of \cite[Theorem 3.4]{GKVW16a} one now finds the equations
$$ A(P_-(z)\otimes I_k) H(z) + B(z) R(z) = (P_+(z)\otimes I_k)H(z), $$ $$C(P_-(z)\otimes I_k) H(z) + D(z) R(z) = Q(z).$$ Eliminating $H(z)$, then gives $$ Q(z)R(z)^{-1} = D + C(P_-(z)\otimes I_k) ((P_+(z)\otimes I_k)-A(P_-(z)\otimes I_k))^{-1}B.$$ Adjusting now the proof of \cite[Theorem 4.1]{GKVW16a} yields the desired determinantal representation.
\end{proof}

\noindent {\em Proof of Theorem \ref{Lorentz}} We proceed as in the proof of Theorem \ref{pdetrep}. Let us denote \begin{equation}\label{PHI} \begin{bmatrix}
    w_1 \cr \vdots \cr w_n
\end{bmatrix} =\Phi_n ( \begin{bmatrix}
    z_1 \cr \vdots \cr z_n
\end{bmatrix}).\end{equation}
Put
$$ \tilde{p} (z_1, \ldots , z_n) =$$ $$ p(\frac{i(1-z_1^2-\sum_{j=2}^n z_j^2)}{(1-z_1)^2+\sum_{j=2}^n z_j^2}, \frac{2z_2}{(1-z_1)^2+\sum_{j=2}^n z_j^2}, \cdots ,\frac{2z_n}{(1-z_1)^2+\sum_{j=2}^n z_j^2} ) \times $$ $$({(1-z_1)^2+\sum_{j=2}^n z_j^2})^{\sum_{j=1}^n n_j} . 
$$
Then, similar as in the proof of Theorem \ref{pdetrep} one deduces from the condition \eqref{pbbdbelow3} that for $(z_1,\ldots , z_n) \in L_n$, we have
$$ |\tilde{p} (z_1,\ldots , z_n)|  \ge \epsilon.$$ 
Thus $\tilde{p}$ is strongly stable on $\overline{L_n}$. By the assumption that Conjecture \ref{nball} holds there exists a polynomial $\tilde{q}(z)$, a positive integer $k$ and a $nk \times nk$ contractive matrix $K$ so that 
$$ \tilde{p}(z) \tilde{q}(z) = \det((P_+(z)\otimes I_k) - K (P_-(z) \otimes I_{k})). $$
Then 
$$ \tilde{p}(z) \tilde{q}(z) = \det(I - K (P_-(z)P_+(z)^{-1} \otimes I_{k}))(\det P_+(z))^k. $$
Since $K$ is a contraction, if $K$ has an eigenspace at eigenvalue 1 it is a reducing subspace. Thus there exists a unitary $U$ so that
\begin{equation}\label{uu} K = U \begin{bmatrix} I & 0 \cr 0 & \tilde{K} \end{bmatrix} U^*,\end{equation}
where $\tilde{K}$ is an $m\times m$ ($m\le nk$) contraction so that $I-\tilde{K}$ is invertible.

For $w \in T_{{\mathcal C}_n}$, 
$$ \tilde{p}\tilde{q} (\frac{1+w_1^2-\sum_{j=2}^n w_j^2}{(w_1+i)^2-\sum_{j=2}^n w_j^2}, \frac{-2w_2}{(w_1+i)^2-\sum_{j=2}^n w_j^2}, \ldots, \frac{-2w_n}{(w_1+i)^2-\sum_{j=2}^n w_j^2} ) $$ $$ \times ((w_1+i)^2-\sum_{j=2}^n w_j^2)^{nk} = $$ 
\begin{equation*}\label{ff} \det((P_+(\Phi_n^{-1}(w))\otimes I_k) - K (P_-(\Phi_n^{-1}(w)) \otimes I_{k}))((w_1+i)^2-\sum_{j=2}^n w_j^2)^{nk}= \end{equation*}
$$ \det (I-K (\phi^{-1}(W(w))\otimes I_k)) \det(P_+(\Phi_n^{-1}(w)))^k((w_1+i)^2-\sum_{j=2}^n w_j^2)^{nk}= $$
$$ \det (((W(w)+iI)\otimes I_k)-K ((W(w)-iI)\otimes I_k)) \det(W(w)+iI)^{-k} $$ $$
\times \left( \frac{2i(w_1+i)}{(w_1+i)^2-\sum_{j=2}^n w_j^2} \right)^{(n-1)k} ((w_1+i)^2-\sum_{j=2}^n w_j^2)^{nk}= $$
$$ \det \left( (W(w)+iI)\otimes I_k - K ((W(w)-iI)\otimes I_k) \right)(2i)^{(n-1)k} (w_1+i)^k=  $$
\begin{equation}\label{ff2} \det \left( i(I+K) + (I-K)(W(w) \otimes I_k)\right) (2i)^{(n-1)k} (w_1+i)^k. \end{equation}
Let us write $U$ in \eqref{uu} as
$$ U = \begin{bmatrix}
    \tilde{U} & V 
\end{bmatrix},$$ where $V$ is an isometry (as $U$ is unitary) of size $nk\times m$. Substituting \eqref{uu} into \eqref{ff2} we obtain
$$ \det \left( i \begin{bmatrix}
    2I & 0 \cr 0 & I+\tilde{K}
\end{bmatrix} + \begin{bmatrix}
    0 & 0 \cr 0 & I-\tilde{K}
\end{bmatrix} U^* (W(w) \otimes I_k) U\right) (2i)^{(n-1)k} (w_1+i)^k=$$
$$ (2i)^{nk-m} \det \left( i (I-\tilde{K})^{-1}(I+\tilde{K}) + V^* (W(w) \otimes I_k) V\right) \times $$ $$ \det(I-\tilde{K})(2i)^{(n-1)k} (w_1+i)^k.$$ Letting $A_0 = i (I-\tilde{K})^{-1}(I+\tilde{K})$ and rearranging the constants we derive the desired result.
\hfill $\Box$

\eject
\section{Exceptional tube domain}\label{sec7} According to \cite{Yin} the exceptional tube domain ${\mathcal{T}}_{27}$ can be described as follows.
Let $Y(\omega)$, $\omega =(\omega_j)_{j=1}^8 \in {\mathbb C}^8$, be defined by
$$ Y(\omega) = \begin{bmatrix}
    \omega_1 & \omega_2 & \omega_3 & \omega_4 & \omega_5 & \omega_6 & \omega_7 & \omega_8 \cr 
    \omega_2 & -\omega_1 & -\omega_4 & \omega_3 & -\omega_6 & \omega_5 & \omega_8 & -\omega_7 \cr 
    \omega_3 & \omega_4 & -\omega_1 & -\omega_2 & -\omega_7 & -\omega_8 & \omega_5 & \omega_6 \cr 
    \omega_4 & -\omega_3 & \omega_2 & -\omega_1 & -\omega_8 & \omega_7 & -\omega_6 & \omega_5 \cr 
    \omega_5 & \omega_6 & \omega_7 & \omega_8 & -\omega_1 & -\omega_2 & -\omega_3 & -\omega_4 \cr 
    \omega_6 & -\omega_5 & \omega_8 & -\omega_7 & \omega_2 & -\omega_1 & \omega_4 & -\omega_3 \cr 
    \omega_7 & -\omega_8 & -\omega_5 & \omega_6 & \omega_3 & -\omega_4 & -\omega_1 & \omega_2 \cr 
    \omega_8 & \omega_7 & -\omega_6 & -\omega_5 & \omega_4 & \omega_3 & -\omega_2 & -\omega_1 \cr 
\end{bmatrix}.$$ When we write $Y(\omega) =\sum_{j=1}^8 \omega_j T_j$ we have that 
\begin{equation}\label{Ts}
 T_k^TT_j + T_j^TT_k=2\delta_{jk}I_8, \end{equation}
where $\delta_{jk}$ is the Kronecker delta. The tube domain ${\mathcal{T}}_{27}$ is now given by
$$ {\mathcal{T}}_{27}= \left\{ W=\begin{bmatrix} w_{11} & w_{12}^T & w_{13}^T \cr w_{12} & w_{22}I_8 & Y(w_{23}) \cr w_{13} & Y(w_{23})^T & w_{33}I_8 \end{bmatrix} : {\rm Im}\ W >0 \right\},
$$ where $w_{jj} \in {\mathbb C}$, $j=1,2,3$, and $w_{12},w_{13}, w_{23} \in {\mathbb C}^8$. 
\begin{lemma}\label{switch} For $y,z\in {\mathbb C}^8$ we have $Y(z)y=T_1Y(y)z$. Also, for $x\in{\mathbb R}^n$, we have
$$ \begin{bmatrix}
    \alpha  & x^T \cr
x & \beta I_n    \end{bmatrix} >0 $$ if and only if 
$$ \begin{bmatrix}
    \beta  & x^T \cr
x & \alpha I_n   \end{bmatrix} >0 $$
\end{lemma}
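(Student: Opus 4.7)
The plan is to treat the two assertions separately. For the identity $Y(z)y = T_1 Y(y)z$, both sides are bilinear in $(y,z) \in \CC^8 \times \CC^8$, so it suffices to verify it on pairs of standard basis vectors, i.e.\ to prove the $64$ identities $T_j e_k = T_1 T_k e_j$ for $j,k \in \{1,\ldots,8\}$. Since $\omega_1$ appears in $Y(\omega)$ only on the diagonal with signs $+1,-1,\ldots,-1$, one reads off $T_1 = \operatorname{diag}(1,-1,\ldots,-1)$, and the $i$-th component of the displayed vector equation unpacks to
$$(T_j)_{ik} = \varepsilon_i\,(T_k)_{ij}, \qquad \varepsilon_1 = 1,\ \varepsilon_i = -1 \text{ for } i \geq 2.$$

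The case $i=1$ is immediate: the first row of $Y(\omega)$ is $\omega^T$, so $(T_j)_{1k}=\delta_{jk}=(T_k)_{1j}$. For $i \geq 2$ I will derive the required antisymmetry $(T_j)_{ik} = -(T_k)_{ij}$ from \eqref{Ts}. The $j=k$ case of \eqref{Ts} gives $T_j^T T_j = I_8$, so each $T_j$ is orthogonal and also satisfies $T_j T_j^T = I_8$. Using this, the $j\ne k$ case of \eqref{Ts} can be manipulated (multiply $T_j^T T_k = -T_k^T T_j$ on the right by $T_j^T$ and then on the left by $T_j$) into the transposed relation $T_j T_k^T + T_k T_j^T = 2\delta_{jk} I_8$. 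Extracting its $(i,1)$ entry and using that the first column of $Y(\omega)$ is $\omega$, i.e.\ $(T_j)_{m1} = \delta_{jm}$, one obtains $(T_j)_{ik} + (T_k)_{ij} = 2\delta_{jk}\delta_{i1}$, which for $i\geq 2$ is exactly the needed antisymmetry.

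For the second assertion I plan to apply the Schur complement criterion. The first matrix is positive definite if and only if $\alpha > 0$ and $\beta I_n - \alpha^{-1}xx^T > 0$. Since $xx^T$ has rank one with nonzero eigenvalue $x^Tx$ and eigenvalue $0$ on the orthogonal complement, the latter condition is equivalent to $\beta > 0$ together with $\alpha\beta > x^T x$. Carrying out the same analysis on the second matrix by taking the Schur complement of the lower-right block $\alpha I_n$ produces the very same three conditions $\alpha,\beta > 0$ and $\alpha\beta > x^T x$, so the two positivity statements coincide.

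The one technical point that needs care is the $i\geq 2$ part of the first assertion: the identity $(T_j)_{ik} = -(T_k)_{ij}$ swaps the ``$T$-label'' $j$ with the column index $k$ and does not follow from \eqref{Ts} by a direct symmetry argument; rather, it emerges only after one passes to the transposed Clifford-type relation and uses the distinguished role of the first row and column of $Y(\omega)$.
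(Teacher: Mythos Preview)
Your proof is correct. The paper states Lemma~\ref{switch} without proof, evidently regarding both assertions as routine, so there is no argument in the paper to compare against; your write-up supplies exactly the kind of verification the authors omitted.

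One small cosmetic point on the first assertion: in your extraction of the $(i,1)$ entry of $T_jT_k^T + T_kT_j^T = 2\delta_{jk}I_8$, the quantity that actually enters is $(T_k^T)_{m1}=(T_k)_{1m}$, i.e.\ the first \emph{row} of $Y(\omega)$, not the first column. Since for this particular $Y(\omega)$ both the first row and the first column equal $\omega$, your stated fact $(T_j)_{m1}=\delta_{jm}$ is true and the conclusion $(T_j)_{ik}+(T_k)_{ij}=2\delta_{jk}\delta_{i1}$ is correct; but it would read more cleanly to cite the first row. The Schur-complement argument for the second assertion is standard and entirely correct.
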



\begin{lemma}\label{Yw} Let $Y(\omega)$ be as above. Then $$Y(\omega)^TY(\omega) = (\sum_{j=1}^8 \omega_j^2)I_8 = Y(\omega)Y(\omega)^T. $$
\end{lemma}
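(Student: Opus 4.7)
The plan is to reduce the lemma to the Clifford-type identity \eqref{Ts} satisfied by the building blocks $T_1,\dots,T_8$, by writing $Y(\omega)=\sum_{j=1}^8\omega_j T_j$ and expanding the products.

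First I would compute
\[
Y(\omega)^TY(\omega)=\sum_{j,k=1}^{8}\omega_j\omega_k\,T_k^TT_j=\sum_{j=1}^8\omega_j^2\,T_j^TT_j+\sum_{1\le j<k\le 8}\omega_j\omega_k\bigl(T_k^TT_j+T_j^TT_k\bigr).
\]
By \eqref{Ts}, the first sum contributes $\sum_j\omega_j^2 I_8$ and each bracket in the second sum vanishes. This gives the first equality $Y(\omega)^TY(\omega)=\bigl(\sum_j\omega_j^2\bigr)I_8$.

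For the second equality I need the analogous identity $T_jT_k^T+T_kT_j^T=2\delta_{jk}I_8$. The case $j=k$ follows immediately from \eqref{Ts}: since $T_j^TT_j=I_8$, the $8\times 8$ matrix $T_j$ is orthogonal, so $T_jT_j^T=I_8$ as well. For $j\ne k$, the relation $T_k^TT_j=-T_j^TT_k$ from \eqref{Ts} combined with orthogonality $(T_k^T)^{-1}=T_k$ yields
\[
T_j=-T_k T_j^T T_k,
\qquad\text{hence}\qquad
T_j T_k^T=-T_k T_j^T T_k T_k^T=-T_k T_j^T,
\]
so $T_jT_k^T+T_kT_j^T=0$. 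Plugging these into the expansion
\[
Y(\omega)Y(\omega)^T=\sum_{j=1}^8\omega_j^2\,T_jT_j^T+\sum_{1\le j<k\le 8}\omega_j\omega_k\bigl(T_jT_k^T+T_kT_j^T\bigr)
\]
again leaves only the diagonal contribution, giving $Y(\omega)Y(\omega)^T=\bigl(\sum_j\omega_j^2\bigr)I_8$.

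There is no real obstacle here: the only subtlety is that \eqref{Ts} is stated for $T_k^TT_j$ and not for $T_jT_k^T$, but the short derivation above — using that each $T_j$ is orthogonal — converts one into the other. Alternatively, one could bypass this step entirely by verifying $T_jT_k^T+T_kT_j^T=2\delta_{jk}I_8$ directly from the explicit entries of $Y(\omega)$, but the argument via orthogonality is cleaner and coordinate-free.
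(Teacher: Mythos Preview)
Your proof is correct and follows the same approach as the paper: write $Y(\omega)=\sum_j \omega_j T_j$ and invoke \eqref{Ts}. The paper's proof is a one-line ``follows easily from \eqref{Ts}''; your version spells out the expansion and, in particular, supplies the small extra step (orthogonality of each $T_j$) needed to pass from \eqref{Ts} to the companion identity $T_jT_k^T+T_kT_j^T=2\delta_{jk}I_8$ required for $Y(\omega)Y(\omega)^T$.
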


\begin{proof}
Write $Y(\omega)=\sum_{j=1}^8 \omega_j T_j$. Using now \eqref{Ts}, the lemma follows easily. 
\end{proof}

We now have three equivalent ways to describe ${\mathcal{T}}_{27}$, as follows.
\begin{prop}\label{T27} For $w_{jj} \in {\mathbb C}$, $j=1,2,3$, and $w_{12},w_{13}, w_{23} \in {\mathbb C}^8$, we have
$${\rm Im} \begin{bmatrix} w_{11} & w_{12}^T & w_{13}^T \cr w_{12} & w_{22}I_8 & Y(w_{23}) \cr w_{13} & Y(w_{23})^T & w_{33}I_8 \end{bmatrix} >0$$ if and only if
$${\rm Im} \begin{bmatrix} w_{22} & w_{12}^T & w_{23}^T \cr w_{12} & w_{11}I_8 & T_1Y(w_{13}) \cr w_{23} & Y(w_{13})^TT_1 & w_{33}I_8 \end{bmatrix} >0$$ if and only if
$${\rm Im} \begin{bmatrix} w_{33} & w_{23}^T & w_{13}^T \cr w_{23} & w_{22}I_8 & Y(T_1w_{12}) \cr w_{13} & Y(T_1w_{12})^T & w_{11}I_8 \end{bmatrix} >0.$$
\end{prop}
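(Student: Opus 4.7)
My plan is to reduce the positive-definiteness of each of the three real symmetric $17\times 17$ matrices $\operatorname{Im} M^{(i)}$ (for $i=1,2,3$) to the same polynomial inequalities in the imaginary parts $a_{ij}:=\operatorname{Im} w_{ij}$, by a common Schur-complement calculation that uses Lemmas~\ref{switch} and~\ref{Yw} in tandem.

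First, for each $\operatorname{Im} M^{(i)}$ I would take the Schur complement with respect to its middle $8\times 8$ block, which is a positive scalar multiple of $I_8$. By Lemma~\ref{Yw}, the quadratic contribution $Y(\cdot)^TY(\cdot)$ collapses to $\|\cdot\|^2 I_8$, so the Schur complement is a $9\times 9$ matrix of the special shape
\begin{equation*}
S^{(i)}=\begin{bmatrix}\alpha_i & u_i^T\\ u_i & \gamma_i I_8\end{bmatrix}
\end{equation*}
with explicit scalars $\alpha_i,\gamma_i\in\RR$ and a vector $u_i\in\RR^8$. The second part of Lemma~\ref{switch} then reduces $S^{(i)}>0$ to the three scalar inequalities $\alpha_i>0$, $\gamma_i>0$ and $\alpha_i\gamma_i>\|u_i\|^2$. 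Expanding $\|u_i\|^2$ once more via Lemma~\ref{Yw} and clearing denominators turns the last of these into the cubic condition
\begin{equation*}
a_{11}a_{22}a_{33}-a_{11}\|a_{23}\|^2-a_{22}\|a_{13}\|^2-a_{33}\|a_{12}\|^2+2\mathcal{T}_i>0,
\end{equation*}
where $\mathcal{T}_i$ is a trilinear form in $a_{12},a_{13},a_{23}$ picked up from the cross term inside $\|u_i\|^2$.

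The two quadratic inequalities $\alpha_i>0$ and $\gamma_i>0$ unpack to conditions of the form $a_{kk}a_{ll}>\|a_{kl}\|^2$ for the appropriate pair of indices, and together with the positivity of the middle scalar they reproduce the three leading $2\times 2$ minor conditions for a $3\times 3$ Hermitian octonion matrix. These are manifestly symmetric in $\{1,2,3\}$, so they are the same condition for each $M^{(i)}$. Hence the entire proposition reduces to showing that the three trilinear forms $\mathcal{T}_1,\mathcal{T}_2,\mathcal{T}_3$ coincide as polynomials in the $a_{ij}$.

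The equivalence $\mathcal{T}_1=\mathcal{T}_2$ is immediate from the first part of Lemma~\ref{switch}: setting $z=a_{23}$ and $y=a_{13}$ in $Y(z)y=T_1Y(y)z$ yields
\begin{equation*}
a_{12}^TY(a_{23})a_{13}=a_{12}^TT_1Y(a_{13})a_{23}=(T_1a_{12})^TY(a_{13})a_{23},
\end{equation*}
which is precisely the trilinear form appearing for $M^{(2)}$. The equivalence $\mathcal{T}_1=\mathcal{T}_3$ is the main obstacle I anticipate: it requires iterating $Y(z)y=T_1Y(y)z$ together with the Clifford identity \eqref{Ts} (equivalently, $T_1^2=I_8$ and $T_j^T=-T_1T_jT_1$ for $j\ge 2$) to commute the $T_1$ factor between the argument of $Y$ and its transposed partner. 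Once this trilinear identification is verified, the three positivity conditions become identical and the proposition follows.
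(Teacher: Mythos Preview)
Your route diverges from the paper's in two ways. The paper takes the Schur complement with respect to the \emph{last} $8\times 8$ block (the one carrying $y_{33}I_8$), not the middle one; it then stays entirely at the $9\times 9$ matrix level. After using Lemma~\ref{Yw} to collapse $Y(y_{23})Y(y_{23})^T$ to a scalar multiple of $I_8$, the paper simultaneously applies the first part of Lemma~\ref{switch} (to rewrite the off-diagonal vector $Y(y_{23})y_{13}$ as $T_1Y(y_{13})y_{23}$) and the second part (to swap the scalar and the $I_8$-block on the diagonal). The resulting $9\times 9$ matrix is then \emph{recognised directly} as the Schur complement of the second $17\times 17$ matrix with respect to \emph{its} last block. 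No scalar expansion, no trilinear form, no cubic inequality ever appears; the argument is a two-line matrix identity. The third equivalence is declared ``similar''.

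Your plan to expand each $S^{(i)}>0$ into scalar inequalities runs into a real gap beyond the acknowledged trilinear obstacle. Your claim that the conditions $\alpha_i>0$, $\gamma_i>0$, together with positivity of the middle scalar, ``reproduce the three leading $2\times 2$ minor conditions'' and are ``the same condition for each $M^{(i)}$'' is incorrect: taking the Schur complement on the middle block yields only \emph{two} of the three minor conditions $a_{kk}a_{ll}>\|a_{kl}\|^2$, and which two you get depends on $i$. For $i=1$ you obtain the $(1,2)$ and $(2,3)$ minors, whereas for $i=2$ you obtain the $(1,2)$ and $(1,3)$ minors. So even after you match the cubic terms, the three systems of four scalar inequalities are not identical; you still owe an argument that the missing $2\times 2$ minor is implied by the others (e.g.\ by extracting it from a principal $9\times 9$ submatrix of the original $17\times 17$ matrix). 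The paper's matrix-level manipulation sidesteps this bookkeeping entirely, and also avoids your ``main obstacle'' $\mathcal{T}_1=\mathcal{T}_3$, which does not reduce to a single application of $Y(z)y=T_1Y(y)z$ as cleanly as $\mathcal{T}_1=\mathcal{T}_2$ does.
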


\begin{proof}
    First we observe that  
    $${\rm Im} \begin{bmatrix} w_{11} & w_{12}^T & w_{13}^T \cr w_{12} & w_{22}I_8 & Y(w_{23}) \cr w_{13} & Y(w_{23})^T & w_{33}I_8 \end{bmatrix} =  \begin{bmatrix} {\rm Im}\ w_{11} & {\rm Im}\ w_{12}^T & {\rm Im}\ w_{13}^T \cr {\rm Im}\ w_{12} & {\rm Im}\ w_{22}I_8 & Y({\rm Im}\ w_{23}) \cr {\rm Im}\ w_{13} & Y({\rm Im}\ w_{23})^T & {\rm Im}\ w_{33}I_8 \end{bmatrix}.$$
    Let now $y_{ij}={\rm Im}\ z_{ij}$, $i,j=1,2,3$. The above equivalences, now follow from the following type of Schur complement calculations:
    $$\begin{bmatrix} y_{11} & y_{12}^T & y_{13}^T \cr y_{12} & y_{22}I_8 & Y(y_{23}) \cr y_{13} & Y(y_{23})^T & y_{33}I_8 \end{bmatrix} >0$$ if and only if $y_{33}>0$ and
    $$\begin{bmatrix} y_{11} & y_{12}^T \cr y_{12} & y_{22}I_8  \end{bmatrix} - \frac{1}{y_{33}} \begin{bmatrix}  y_{13}^T \cr  Y(y_{23})  \end{bmatrix} \begin{bmatrix}  y_{13} & Y(y_{23})^T  \end{bmatrix} >0.$$ Using now that $Y(y_{23})y_{13}=T_1Y(y_{13})y_{23}$, $Y(y_{23})Y(y_{23})^T =y_{23}^Ty_{23} I_8$, and the switch in diagonal entries as in Lemma \ref{switch} gives
    $$\begin{bmatrix} y_{22}-\frac{1}{y_{33}}y_{23}^Ty_{23} & y_{12}^T-\frac{1}{y_{33}} y_{23}^TY(y_{13})^TT_1 \cr y_{12}-\frac{1}{y_{33}} T_1Y(y_{13})y_{23}& (y_{11}-\frac{1}{y_{33}}y_{13}^Ty_{13})I_8  \end{bmatrix} >0.$$ The latter is equivalent to 
    $$ \begin{bmatrix} y_{22} & y_{12}^T & y_{23}^T \cr y_{12} & y_{11}I_8 & T_1Y(y_{13}) \cr y_{23} & Y(y_{13})^TT_1 & y_{33}I_8 \end{bmatrix} >0,$$
    where we used that $T_1^2=I$. This proves the equivalence of the first two inequalities. The proof of the equivalence with the third inequality is similar.
\end{proof}

For our theory to work we need to identify a bounded domain whose Cayley transform yields ${\mathcal{T}}_{27}$. This leads to the following definition. For $\zeta=(w_1,x,y,w_2, z,w_3)\in
({\mathbb C}\setminus \{ 1 \} ) \times {\mathbb C}^8\times {\mathbb C}^8\times {\mathbb C}\times {\mathbb C}^8\times {\mathbb C} $, let 
\begin{equation}\label{Xzeta}
X (\zeta) = \begin{bmatrix}
    w_1 & x^T & y^T \cr x & w_2I_8 & Y(z) \cr y & Y(z)^T & w_3 I_8
\end{bmatrix} - \frac{1}{1-w_1} \begin{bmatrix}
    0 \cr x \cr y  
\end{bmatrix}\begin{bmatrix}
    0 & x^T & y^T  
\end{bmatrix}. \end{equation} Define now the domain
$$ {\mathcal C} = \{ \zeta : \| X(\zeta ) \| <1  \}.$$

\begin{prop}\label{71}
    The Cayley transform $\phi(X)=i(I+X)(I-X)^{-1}$ maps 
${\mathcal C}$ bijectively onto ${\mathcal{T}}_{27}$.
\end{prop}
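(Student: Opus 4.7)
The plan is to exploit the standard fact that $\phi$ is a bijection from the open operator ball $\{X \colon \|X\|<1\}$ onto the matrices with positive definite imaginary part; what remains is to check that $\phi$ matches the parametrization $\zeta \mapsto X(\zeta)$ of $\mathcal{C}$ with the block-decomposition parametrization of $\mathcal{T}_{27}$. Set $v = \begin{bmatrix} x \\ y \end{bmatrix}$ and $D = \begin{bmatrix} w_2 I_8 & Y(z) \\ Y(z)^T & w_3 I_8 \end{bmatrix}$. The rank-one correction in \eqref{Xzeta} is engineered precisely so that one has the block-triangular factorization
$$ I - X(\zeta) = \begin{bmatrix} 1 & 0 \\ -\frac{v}{1-w_1} & I_{16} \end{bmatrix} \begin{bmatrix} 1-w_1 & 0 \\ 0 & I_{16} - D \end{bmatrix} \begin{bmatrix} 1 & -\frac{v^T}{1-w_1} \\ 0 & I_{16} \end{bmatrix}, $$
which I would verify by direct multiplication.

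Using the identity $\phi(X) = 2i(I-X)^{-1} - iI$, the computation of $\phi(X(\zeta))$ then reduces to the inversion of $I_{16} - D$. Lemma \ref{Yw}, i.e.\ $Y(z)Y(z)^T = Y(z)^T Y(z) = (\sum_{j=1}^8 z_j^2) I_8$, yields by block inversion
$$ (I_{16} - D)^{-1} = \frac{1}{s} \begin{bmatrix} (1-w_3) I_8 & Y(z) \\ Y(z)^T & (1-w_2) I_8 \end{bmatrix}, \qquad s := (1-w_2)(1-w_3) - \sum_{j=1}^8 z_j^2, $$
which has the same structural form as $D$ itself. Inverting the factorization above and propagating this special form through the resulting triple product, one sees that $\phi(X(\zeta))$ has exactly the block structure defining $\mathcal{T}_{27}$: its $(2,2)$ and $(3,3)$ blocks are scalar multiples of $I_8$, its $(2,3)$ block equals $Y(w_{23})$ with $w_{23} = \frac{2i}{s} z$, and the off-diagonal entries in the first row and column assemble into genuine $\mathbb{C}^8$-vectors. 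The positivity of ${\rm Im}\,\phi(X(\zeta))$ is immediate from $\|X(\zeta)\|<1$ via the Cayley correspondence. Injectivity of $\zeta \mapsto X(\zeta)$ is obvious: $w_1$ is the $(1,1)$ entry, $v$ can be read off the first column, and $D$ (hence $w_2, w_3, z$) is recovered from the lower-right block after adding back the rank-one correction.

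For surjectivity, one inverts the above explicit formulas. Given $W \in \mathcal{T}_{27}$, set $t := (w_{22}+i)(w_{33}+i) - \sum_j w_{23,j}^2$; then $s = -4/t$ and one recovers $w_2 = 1 - 2i(w_{33}+i)/t$, $w_3 = 1 - 2i(w_{22}+i)/t$, $z = 2iw_{23}/t$, after which $w_1$ and $v$ are determined uniquely from the formulas for the $(1,1)$, $(1,2)$, and $(1,3)$ blocks of $\phi(X(\zeta))$ by a short algebraic manipulation. The resulting $\zeta$ lies automatically in $\mathcal{C}$, because $\phi^{-1}(W) = X(\zeta)$ and the inverse Cayley transform of any matrix with positive definite imaginary part is a strict contraction. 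The main technical obstacle throughout is the bookkeeping needed to verify that the structural block form $\begin{bmatrix} \alpha I_8 & Y(b) \\ Y(b)^T & \beta I_8 \end{bmatrix}$ is preserved under inversion and propagates cleanly through the Cayley formula; this is exactly where the Clifford-type relations \eqref{Ts} encoded in Lemma \ref{Yw} are indispensable, and it is the step at which the exceptional nature of the tube domain is genuinely used.
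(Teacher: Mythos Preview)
Your proposal is correct and follows essentially the same approach as the paper: both reduce the computation of $\phi(X(\zeta))$ to a block Schur-complement (your LDU factorization of $I-X(\zeta)$ is exactly the content of the paper's Lemma~\ref{phi2x2}) and then invoke Lemma~\ref{Yw} to check that the structural form $\begin{bmatrix}\alpha I_8 & \gamma Y(z)\\ \gamma Y(z)^T & \beta I_8\end{bmatrix}$ survives inversion. If anything your treatment is more thorough, since the paper's own proof only verifies that $\phi(X(\zeta))\in\mathcal{T}_{27}$ and leaves the explicit surjectivity/injectivity inversion that you sketch implicit.
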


\begin{lemma}\label{phi2x2}
If $X=\begin{bmatrix} w & \psi^T \cr \psi & Z-\frac{1}{1-w}\psi \psi^T \end{bmatrix}$, then 
\begin{equation}\label{phiX}
    \phi(X)= \begin{bmatrix}
    \phi(w)+\frac{2i}{(1-w)^2} \psi^T (I-Z)^{-1} \psi  & \frac{2i}{1-w} \psi^T (I-Z)^{-1}  \\
    \frac{2i}{1-w} (I-Z)^{-1} \psi & \phi(Z)
  \end{bmatrix}.
\end{equation}

\end{lemma}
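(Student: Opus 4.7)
The plan is a direct block-matrix computation of $\phi(X)=i(I+X)(I-X)^{-1}$, exploiting the fact that the rank-one correction $-\frac{1}{1-w}\psi\psi^{T}$ in the $(2,2)$ block of $X$ is precisely engineered to make the Schur complement of $I-X$ collapse to $I-Z$.

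First I would write
\[
I-X=\begin{bmatrix} 1-w & -\psi^{T} \\ -\psi & (I-Z)+\tfrac{1}{1-w}\psi\psi^{T}\end{bmatrix},\qquad I+X=\begin{bmatrix} 1+w & \psi^{T} \\ \psi & (I+Z)-\tfrac{1}{1-w}\psi\psi^{T}\end{bmatrix},
\]
and compute the Schur complement of the $(1,1)$ entry $1-w$ in $I-X$; the $\frac{1}{1-w}\psi\psi^{T}$ terms cancel exactly and the Schur complement reduces to $I-Z$. Applying the standard $2\times 2$ block-inverse formula then yields the closed-form expression
\[
(I-X)^{-1}=\begin{bmatrix} \tfrac{1}{1-w}+\tfrac{1}{(1-w)^{2}}\psi^{T}(I-Z)^{-1}\psi & \tfrac{1}{1-w}\psi^{T}(I-Z)^{-1} \\ \tfrac{1}{1-w}(I-Z)^{-1}\psi & (I-Z)^{-1}\end{bmatrix}.
\]

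Next I would multiply $(I+X)$ by this inverse block by block. The four blocks simplify through elementary identities: in the $(1,1)$ block one uses $\tfrac{1+w}{(1-w)^{2}}+\tfrac{1}{1-w}=\tfrac{2}{(1-w)^{2}}$ and recognizes $i\tfrac{1+w}{1-w}=\phi(w)$; in the off-diagonal blocks the identity $\tfrac{1+w}{1-w}+1=\tfrac{2}{1-w}$ produces the stated factors $\tfrac{2i}{1-w}$; and in the $(2,2)$ block the two rank-one terms $\tfrac{1}{1-w}\psi\psi^{T}(I-Z)^{-1}$ cancel, leaving $(I+Z)(I-Z)^{-1}$, which multiplied by $i$ is exactly $\phi(Z)$.

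No part of this argument presents a genuine obstacle; the only care required is to track that $\psi^{T}(I-Z)^{-1}\psi$ is a scalar (so that $\psi\psi^{T}(I-Z)^{-1}\psi=(\psi^{T}(I-Z)^{-1}\psi)\psi$) and to keep the rank-one and scalar contributions clearly separated. The whole lemma is therefore a direct, short verification whose content is the cancellation $D-CA^{-1}B=I-Z$ in the Schur complement, which is exactly what the definition \eqref{Xzeta} of $X(\zeta)$ was tailored to produce.
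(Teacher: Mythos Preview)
Your proposal is correct and follows essentially the same approach as the paper: both compute $(I-X)^{-1}$ via the block $2\times 2$ Schur-complement inverse formula (obtaining the identical expression you display) and then multiply on the left by $i(I+X)$. You in fact supply more detail than the paper does, making explicit the Schur-complement cancellation $D-CA^{-1}B=I-Z$ and the block-by-block simplifications in the final product.
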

\begin{proof}
 We use the well known formula for inverses of block matrices:   
$$\begin{bmatrix}
    {A} & {B} \\
    {C} & {D}
  \end{bmatrix}^{-1} = \begin{bmatrix}
     {A}^{-1} + {A}^{-1}{B}\left({D} - {CA}^{-1}{B}\right)^{-1}{CA}^{-1} &
      -{A}^{-1}{B}\left({D} - {CA}^{-1}{B}\right)^{-1} \\
    -\left({D}-{CA}^{-1}{B}\right)^{-1}{CA}^{-1} &
       \left({D} - {CA}^{-1}{B}\right)^{-1}
  \end{bmatrix}.$$
  Computing $(I-X)^{-1}$ we find
  $$(I-X)^{-1}= \begin{bmatrix}
    \frac{1}{1-w} + \frac{1}{(1-w)^2}\psi^T(I-Z)^{-1}\psi & \frac{1}{1-w}\psi^T(I-Z)^{-1} \\
    \frac{1}{1-w}(I-Z)^{-1}\psi & (I-Z)^{-1}
  \end{bmatrix}.$$ Multiplying on the left with $i(I+X)$ it is now easy to obtain \eqref{phiX}.
\end{proof}

\noindent {\em Proof of Proposition \ref{71}.} Let $X(\zeta) \in {\mathcal C}$.
It follows from Lemma \ref{phi2x2} that 
\begin{equation}\label{phiX2}  
\phi(X(\zeta)) = \begin{bmatrix} * & * \cr * & \phi \left( \begin{bmatrix}
     w_2I_8 & Y(z) \cr  Y(z)^T & w_3 I_8
\end{bmatrix} \right) \end{bmatrix} . \end{equation} 
Using Lemma \ref{Yw} and a $2\times 2$ block matrix inverse formula using Schur complements, one sees that 
$$ \begin{bmatrix}
    (1- w_2)I_8 & -Y(z) \cr  -Y(z)^T & (1-w_3) I_8
\end{bmatrix}^{-1} = \begin{bmatrix}
     \alpha I_8 & \gamma Y(z) \cr  \gamma Y(z)^T & \beta I_8
\end{bmatrix},$$
where $$ \alpha = \frac{(1-w_3)}{(1-w_2)(1-w_3) -\|z\|^2}, \beta= \frac{(1-w_2)}{(1-w_2)(1-w_3) -\|z\|^2}, $$ $$\gamma =\frac{1}{(1-w_2)(1-w_3) -\|z\|^2}.$$
But, then 
$$ \phi \left( \begin{bmatrix}
     w_2I_8 & Y(z) \cr  Y(z)^T & w_3 I_8
\end{bmatrix} \right) = \begin{bmatrix}
     \alpha' I_8 & \gamma' Y(z) \cr  \gamma' Y(z)^T & \beta' I_8
\end{bmatrix},$$
where $\begin{bmatrix}
     \alpha' & \gamma' \cr  \gamma' & \beta'
\end{bmatrix} = \phi\left( \begin{bmatrix}
     w_2 & \|z\| \cr  \|z\| & w_3
\end{bmatrix}\right).$
This gives that $\phi(X(\zeta))$ has the required format to be an element of ${\mathcal T}_{27}$. As $\| X(\zeta) \| <1$, we of course also get that ${\rm Im} \ \phi(X(\zeta)) >0$.
\hfill $\Box$

As an aside, we mention  
$$\det( \begin{bmatrix}
     (1-w_2)I_8 & -Y(z) \cr  -Y(z)^T & (1-w_3) I_8
\end{bmatrix}) = ((1-w_2)(1-w_3)-z^Tz)^8$$
and 
$$\det(I-X(\zeta))=(1-w_1)((1-w_2)(1-w_3)-z^Tz)^8.$$

The domain ${\mathcal C}$ is bounded, as we will see next. In fact, we have the following stronger condition.

\begin{lemma} Let $T=(T_j)_{j=1}^{27}$ be commuting Hilbert space operators so that $\| X(T) \| <1, $ where $X(\zeta)$ is defined in \eqref{Xzeta}. Then each $T_j$, $j=1,\ldots, 27$, is a strict contraction.
\end{lemma}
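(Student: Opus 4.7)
My plan is to split the 27 operators into two groups and handle each by compressing $X(T)$ to a small principal submatrix and invoking Lemma~\ref{phi2x2}.

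For the first group, $T_{w_1}, T_{x_j}, T_{y_j}$ ($j=1,\ldots,8$), I would observe that the rank-one subtraction $\frac{1}{1-w_1}(0,x,y)^T(0,x^T,y^T)$ in \eqref{Xzeta} has vanishing first row and column, so the first row of $X(T)$ is simply $[T_{w_1},\,T_x^T,\,T_y^T]$. Every block entry of an operator matrix is a compression and hence has norm at most the matrix norm; as $\|X(T)\|<1$, each of $T_{w_1},T_{x_1},\ldots,T_{x_8},T_{y_1},\ldots,T_{y_8}$ is thus a strict contraction.

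For the second group, $T_{w_2},T_{w_3},T_{z_j}$, I would fix $j\in\{1,\ldots,8\}$ and take the $3\times3$ compression $Y_j$ of $X(T)$ to indices $\{1,\,2,\,j+9\}$. Using $Y(T_z)_{1,j}=T_{z_j}$ (first row of $Y(\omega)$ in Section~\ref{sec7}), a direct computation from \eqref{Xzeta} gives
\[
Y_j = \begin{bmatrix} T_{w_1} & \psi_j^T \\ \psi_j & Z_j - (I-T_{w_1})^{-1}\psi_j\psi_j^T \end{bmatrix},\quad \psi_j=\begin{bmatrix}T_{x_1}\\T_{y_j}\end{bmatrix},\quad Z_j=\begin{bmatrix}T_{w_2}&T_{z_j}\\T_{z_j}&T_{w_3}\end{bmatrix},
\]
which is exactly the format of Lemma~\ref{phi2x2} with $w=T_{w_1}$. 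Since $Y_j$ is a compression of the strict contraction $X(T)$ we have $\|Y_j\|<1$, so $I-Y_j$ is invertible; the block Schur complement applied to $I-Y_j$ then shows that both $I-T_{w_1}$ and $I-Z_j$ are invertible.

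Because all 27 operators commute pairwise, Lemma~\ref{phi2x2} extends to this operator setting (its proof uses only block-matrix inversion and commutativity), giving $\phi(Y_j) = \begin{bmatrix} * & * \\ * & \phi(Z_j) \end{bmatrix}$. From $\|Y_j\|<1$ we have $\operatorname{Im}\phi(Y_j)>0$, and compression preserves strict positivity of the imaginary part, so $\operatorname{Im}\phi(Z_j)>0$. The standard Cayley identity $\operatorname{Im}\phi(Z_j)=(I-Z_j^*)^{-1}(I-Z_j^*Z_j)(I-Z_j)^{-1}$ converts this to $\|Z_j\|<1$, and hence the three distinct entries $T_{w_2},T_{w_3},T_{z_j}$ of $Z_j$ are all strict contractions. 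Varying $j$ over $\{1,\ldots,8\}$ exhausts the second group. The only point requiring a moment's care is the transfer of Lemma~\ref{phi2x2} to commuting operators, which is immediate since every step in its proof is a formal identity in a commutative subalgebra.
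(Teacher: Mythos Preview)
Your proof is correct and follows essentially the same route as the paper: the first seventeen operators are read off directly from the first row/column of $X(T)$, and the remaining ten are handled by combining a compression with the Cayley transform via Lemma~\ref{phi2x2} so that the rank-one correction disappears and a strict contraction with entries $T_{w_2},T_{z_j},T_{w_3}$ emerges. The only cosmetic difference is the order of operations: you compress to a $3\times3$ principal submatrix $Y_j$ and then apply $\phi$, whereas the paper applies $\phi$ to all of $X(T)$ and then deletes the first block row and column, obtaining the full $16\times16$ matrix $\begin{bmatrix} T_{18}I_8 & Y((T_j)_{j=19}^{26})\\ Y((T_j)_{j=19}^{26})^T & T_{27}I_8\end{bmatrix}$ as a strict contraction in one stroke.
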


\begin{proof} Suppose that $\| X(T) \| <1.$
Since the first block column of $X(T)$ is a strict contraction, it follows that  $T_j$, $j=1,\ldots, 17$, are strict contractions. In addition, we have that $B:= \phi(X(T))$ has positive definite imaginary part. But then the same is true when we remove the first block row and columns of $B$, yielding $\widetilde{B}$ with ${\rm Im} \ \widetilde{B} >0$. Thus $\| \phi^{-1} (\widetilde{B}) \| <1$. By \eqref{phiX2}, we have that 
$$ \phi^{-1} (\widetilde{B}) = \begin{bmatrix}
    I_8 \otimes T_{18} & Y((T_j)_{j=19}^{26}) \cr 
    Y((T_j^T)_{j=19}^{26})^T & I_8 \otimes T_{27}
\end{bmatrix}, $$
and thus we find that $T_j$, $j=18,\ldots, 27$, are also strict contractions.
\end{proof}

There is one result that we are missing to complete our argument. We state it as a conjecture.

\begin{conj}\label{polcon}
    The  domain ${\mathcal C}$ is polynomially convex.
\end{conj}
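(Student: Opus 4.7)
The plan is to prove Conjecture \ref{polcon} by establishing that $\overline{\mathcal{C}}$ is convex; polynomial convexity then follows from the classical fact that a compact convex subset of $\mathbb{C}^n$ is polynomially convex (by Hahn--Banach: any real-affine functional separating a point from a convex set can be upgraded to a separating complex polynomial after a translation).

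The strategy routes through Jordan-algebra theory. The tube $\mathcal{T}_{27}$ is $V + i\Lambda$, where $V = H_3(\mathbb{O})$ is the exceptional Euclidean Jordan algebra of $3\times 3$ Hermitian octonionic matrices and $\Lambda$ is its symmetric cone. Its bounded realization under the Jordan-theoretic Cayley transform $c(a) = (a - ie)(a + ie)^{-1}$ (product and inverse in $V_{\mathbb{C}}$) is the exceptional bounded symmetric domain of type $E_{VII}$, which is convex in the natural linear coordinates on $V_{\mathbb{C}} \cong \mathbb{C}^{27}$; see \cite{Loos,Faraut,Upmeier}. My first step is to verify that in the linear coordinates $\zeta = (w_1, x, y, w_2, z, w_3)$ of $V_{\mathbb{C}}$ --- with $w_1, w_2, w_3$ the diagonal entries and $x, y, z \in \mathbb{C}^8$ the off-diagonal octonions --- the map $\phi \circ X$ of Proposition \ref{71} coincides with the inverse Jordan-theoretic Cayley transform. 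Once this is established, $\mathcal{C}$ becomes exactly the Harish--Chandra realization in linear $\zeta$-coordinates, and hence is convex.

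The identification would exploit the Clifford-type relations \eqref{Ts} together with the norm identity $Y(z)Y(z)^T = \|z\|^2 I$ from Lemma \ref{Yw} to match matrix products involving $Y(z)$ with the octonionic products appearing in the Jordan multiplication. The block structure of $X(\zeta)$ together with the Schur-complement correction in \eqref{Xzeta} should correspond to the Peirce-decomposition computation of $e - \zeta\bar{\zeta}$ relative to the primitive idempotent at the $(1,1)$-entry; here the scalar $1 - w_1$ in the denominator is precisely the $V_{11}$-component of $e - \zeta$, which accounts for the particular form of the correction.

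The main obstacle is the non-associativity of the octonions: beyond the quadratic identity of Lemma \ref{Yw}, higher-order matrix products of the $T_j$ do not reflect octonionic multiplication, so tracking the Peirce decomposition and the quadratic representation $P(\zeta) = 2 L(\zeta)^2 - L(\zeta^2)$ of the Jordan algebra in purely matrix terms requires considerable care. If this route is too intricate, a fallback is to seek a direct LMI description of $\mathcal{C}$: since $X(\zeta)$ is the Schur complement of a linear matrix pencil with respect to the $(1-w_1)$-block, one may attempt to recast the condition $\|X(\zeta)\| < 1$, equivalently $I - X(\zeta)^*X(\zeta) > 0$, as an LMI in an enlarged matrix affine in $\zeta$ and $\bar\zeta$; this would give convexity and hence polynomial convexity of $\overline{\mathcal{C}}$ directly.
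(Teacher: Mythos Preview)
This statement is left as an open conjecture in the paper; there is no proof to compare against. The authors explicitly write that it is ``one result that we are missing to complete our argument,'' and they voice the same belief that underlies your plan: that $\mathcal{C}$ should be the $27$-dimensional exceptional Cartan domain, hence convex, hence polynomially convex. Their suggested attack is slightly different from yours---they propose first checking rotational invariance ($\zeta\in\mathcal{C}\Rightarrow e^{i\theta}\zeta\in\mathcal{C}$) and then invoking the classification in \cite{Loos}---whereas you propose a direct identification of $\phi\circ X$ with the Jordan-algebraic Cayley transform on $H_3(\mathbb{O})_{\mathbb{C}}$. Both routes aim at the same identification and both remain, at this point, programs rather than proofs.

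Your proposal is honest about its incompleteness, but let me flag one concrete issue with the fallback. Writing $X(\zeta)$ as a Schur complement of an affine pencil $M(\zeta)$ is fine, but the standard Schur-complement lemma converts a positivity condition on the Schur complement into an LMI on $M$, not a norm bound. The condition you need, $\|X(\zeta)\|<1$, is equivalent to $\begin{bmatrix} I & X(\zeta)\\ X(\zeta)^* & I\end{bmatrix}>0$, and there is no general mechanism for lifting this to an affine LMI in $(\zeta,\bar\zeta)$ via the Schur-complement representation of $X$. If such a lift existed, it would already prove convexity; but nonlinear Schur complements of affine pencils can easily cut out nonconvex regions under a norm bound. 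So the fallback, as stated, does not obviously go through without additional structure specific to the exceptional domain---which is exactly what the main route is trying to supply.
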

In fact, we believe that ${\mathcal C}$ is the 27-dimensional exceptional Cartan domain (and thus convex, which implies polynomially convexity). It seems that it would suffice to prove that ${\mathcal C}$ is rotationally invariant (i.e., $\zeta \in {\mathcal C}$ implies $e^{i\theta} \zeta \in {\mathcal C}$) and subsequently use the theory in \cite{Loos} to conclude that ${\mathcal C}$ is the 27-dimensional exceptional Cartan domain.

We define $\Omega: {\mathbb C}^{27} \to \oplus_{j=1}^3 {\mathbb C}^{17\times 17}$ by $$\Omega (w) = \Omega_1 (w)\oplus \Omega_2 (w) \oplus \Omega_3 (w), \ \ w=(w_j)_{j=1}^{27} \in {\mathbb C}^{27}, $$ where
$$\Omega_1 (w) = \begin{bmatrix} w_{1} & ((w_j)_{j=2}^9)^T & ((w_j)_{j=10}^{17})^T \cr (w_j)_{j=2}^9 & w_{18}I_8 & Y((w_j)_{j=19}^{26}) \cr (w_j)_{j=10}^{17} & Y((w_j)_{j=19}^{26})^T & w_{27}I_8 \end{bmatrix},$$
$$\Omega_2 (w) = \begin{bmatrix} w_{18} & ((w_j)_{j=2}^9)^T & ((w_j)_{j=19}^{26})^T \cr (w_j)_{j=2}^9 & w_{1}I_8 & T_1Y((w_j)_{j=10}^{17}) \cr (w_j)_{j=19}^{26} & Y((w_j)_{j=10}^{17})^TT_1 & w_{27}I_8 \end{bmatrix},$$
$$\Omega_3 (w) = \begin{bmatrix} w_{27} & ((w_j)_{j=19}^{26})^T & ((w_j)_{j=10}^{17})^T \cr (w_j)_{j=19}^{26} & w_{18}I_8 & Y(T_1(w_j)_{j=2}^{9}) \cr (w_j)_{j=10}^{17} & Y(T_1(w_j)_{j=2}^{9})^T & w_{1}I_8 \end{bmatrix}.
\footnote{While we can describe the tube domain ${\mathcal T}_{27}$ by using only one of $\Omega_j$, $j=1,2,3$, the inclusion of all three direct summands is done so that in applying the results in \cite{GKVW16a} one can easily check the  Archimedean peroperty of the associated matrix system of Hermitian quadratic modules.}$$
Next, let $\eta(w)\in {\mathbb C}^{27}$ be defined via
$$ X(\eta(w))=\phi^{-1}(\Omega(w)),$$
which is defined for $w\in{\mathbb C}^{27}$ with $\det (I-\Omega(w))\neq 0$.
Then $\eta$ and its inverse are  rational vector valued maps. Let $r(w)$ be a lowest total degree polynomial so that 
$r(w) \eta^{-1}(w)$ is a vector valued polynomial map. We have the following conjecture.

\begin{conj}\label{except} 
Let $p(w_1,\ldots , w_{27})$ be a polynomial of multidegree $(n_1, \ldots, n_n)$ so that for some $\epsilon >0$ we have
that $(w_1, \ldots , w_{27})  \in {\mathcal T}_{27}$ implies \begin{equation}\label{pbbdbelow4} |p(w_1, \ldots , w_{27}) | \ge \epsilon  \left| r(\eta^{-1}(w))\right|^{-\sum_{j=1}^{27} n_j}.\end{equation} Then there exists a polynomial $q(w_1, \ldots , w_{27})$ so that the product $pq$ has a certifying determinantal representation; that is, there exists $k\in{\mathbb N}$ and a $m\times m$ matrix $A_0$ with $m\le nk$ and ${\rm Im}\ A_0 \ge 0$ so that
$$ p(w)q(w) = (w_1+i)^k\det \left( A_0 + V^* (\Omega(w) \otimes I_k) V \right), $$ where $\Omega(w)$ is as above and $V$ is $nk \times m$ so that $V^*V=I_m$.
\end{conj}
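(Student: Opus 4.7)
The plan is to mimic the proof of Theorem \ref{Lorentz}, with the bounded domain $\mathcal{C}$ playing the role of the Lie ball $L_n$ and the Cayley correspondence $\zeta \mapsto \phi(X(\zeta))$ playing the role of $\Phi_n$. First I would pull back the polynomial $p$ to the $\zeta$-variables by setting
\begin{equation*}
\tilde{p}(\zeta) := p\bigl(\eta^{-1}(\zeta)\bigr)\, r(\zeta)^{\sum_{j=1}^{27} n_j},
\end{equation*}
where $r(\zeta)^{\sum n_j}$ is the polynomial factor that clears the rational denominators coming from $\eta^{-1}$ (by the very definition of $r$). Hypothesis \eqref{pbbdbelow4} is tailor-made so that, for $\zeta \in \mathcal{C}$ corresponding to $w = \eta^{-1}(\zeta) \in \mathcal{T}_{27}$, one has $|\tilde{p}(\zeta)| \ge \epsilon$; by the bijection in Proposition \ref{71} and continuity this extends to $\tilde{p}$ being strongly stable on $\overline{\mathcal{C}}$.

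Next I would apply a variant of \cite[Theorem 4.1]{GKVW16a} to $\tilde{p}$, adapted to the description of $\mathcal{C}$ by the condition $\|X(\zeta)\| < 1$. As in Section \ref{sec6}, since $X(\zeta)$ is rational one first passes to a polynomial matrix pencil description $P_+(\zeta)^*P_+(\zeta) - P_-(\zeta)^*P_-(\zeta) > 0$ of $\mathcal{C}$ by clearing the denominator in $X(\zeta)$. Conjecture \ref{polcon} enters precisely here: polynomial convexity of $\overline{\mathcal{C}}$ provides the bound \eqref{finiteAgler} needed to invoke the matrix-valued Hermitian Positivstellensatz (\cite[Theorem 2.3]{GKVW16a}), and the Archimedean property of the associated matrix system is built in by the direct-sum construction of $\Omega(w) = \Omega_1(w) \oplus \Omega_2(w) \oplus \Omega_3(w)$. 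The analogues of Lemmas \ref{Taylor} and \ref{Lm32} then yield a polynomial $\tilde{q}$, a positive integer $k$, and a strict contraction $K$ satisfying
\begin{equation*}
\tilde{p}(\zeta)\tilde{q}(\zeta) = \det\!\bigl(P_+(\zeta)\otimes I_k - K\,(P_-(\zeta)\otimes I_k)\bigr).
\end{equation*}

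The final step transfers this representation back to $w$-coordinates. Setting $\zeta = \eta(w)$ so that $X(\zeta) = \phi^{-1}(\Omega(w))$ and carrying out the Cayley manipulation
\begin{equation*}
\det\!\bigl(I - K(\phi^{-1}(\Omega(w))\otimes I_k)\bigr)\,\det(\Omega(w)+iI)^k = \det\!\bigl(i(I+K) + (I-K)(\Omega(w)\otimes I_k)\bigr),
\end{equation*}
as in the calculation leading up to \eqref{ff2}, produces a linear-pencil expression in $\Omega(w)$. Using Lemmas \ref{Yw} and \ref{phi2x2} together with the $2\times 2$ block Schur-complement structure of each $\Omega_j$, the scalar factor $\det(\Omega(w)+iI)^k$ should reduce to $(w_1+i)^k$ times constants that can be absorbed into the choice of $q$. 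Finally, should $K$ have an eigenvalue equal to $1$, I would write $K = U(I \oplus \tilde K)U^*$ with $\tilde K$ a strict contraction of size $m\times m$, split $U = [\tilde U\ V]$ so that $V$ is an $nk \times m$ isometry, and take $A_0 = i(I-\tilde K)^{-1}(I+\tilde K)$; by construction ${\rm Im}\ A_0 \ge 0$, and one arrives at the desired representation $p(w)q(w) = (w_1+i)^k \det(A_0 + V^*(\Omega(w)\otimes I_k)V)$.

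The principal obstacle is Conjecture \ref{polcon} itself: without polynomial convexity of $\overline{\mathcal{C}}$ the functional-calculus estimates underlying the GKVW Positivstellensatz cannot be pushed through. A secondary technical difficulty lies in the explicit algebraic bookkeeping of the Cayley push-forward: since $\Omega(w)$ is a direct sum of three $17 \times 17$ blocks rather than a single linear pencil, and since $X(\zeta)$ is not itself linear in $\zeta$ (which forces the detour through the polynomial pencil $P_\pm$), verifying the exact form of the extracted scalar factor $(w_1+i)^k$ and the Archimedean property requires considerably more care than in the Lorentz-cone case of Theorem \ref{Lorentz}.
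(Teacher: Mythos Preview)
Your proposal aligns precisely with what the paper itself envisions: the paper does not prove this statement (it is stated as a conjecture), and explicitly says that the proof ``is envisioned to go along the lines of the proof of Theorem~\ref{Lorentz}'' with ``the main hurdle [being] the polynomially convexity of ${\mathcal C}$.'' You have correctly identified both the strategy (pull back to $\mathcal{C}$ via $\eta$, apply the GKVW machinery with a $P_\pm$ description, then push forward via the Cayley transform) and the principal obstruction (Conjecture~\ref{polcon}), and your remark about the Archimedean property being ensured by the threefold direct sum $\Omega_1\oplus\Omega_2\oplus\Omega_3$ matches the footnote the authors attach to the definition of $\Omega$.
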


The proof for this conjecture is envisioned to go along the lines of the proof of Theorem \ref{Lorentz}. The main hurdle is the polynomially convexity of ${\mathcal C}$.


\bigskip

\subsection*{Acknowledgment} The authors wish to
thank the Banff International Research Station for the opportunity to work on this project
during the research in teams program 24rit023.

\end{document}